\def\l@subsection{\@tocline{2}{0pt}{30pt}{5pc}{}}
\definecolor{green}{rgb}{0.1,0.1,0.1}
\theoremstyle{plain}
\newtheorem{thm}{Theorem}
\newtheorem{corollary}[thm]{Corollary}
\newtheorem{lemma}[thm]{Lemma}
\newtheorem{prop}[thm]{Proposition}
\newtheorem{defin}[thm]{Definition}
\newtheoremstyle{exm}
{9pt}{9pt}{}{}{\bfseries}{}{.5em}{}
\theoremstyle{exm}
\newtheorem{exm}[thm]{Example}
\newtheoremstyle{rmk}
{9pt}{9pt}{}{}{\bfseries}{}{.5em}{}
\theoremstyle{rmk}
\newtheorem{rmk}[thm]{Remark}
\newtheoremstyle{question}
{9pt}{9pt}{}{}{\bfseries}{}{.5em}{}
\theoremstyle{question}
\numberwithin{equation}{section}
\numberwithin{thm}{section}
\numberwithin{figure}{section}
\newcommand{\T}{\mathbb{T}}
\newcommand{\M}{\mathsf{M}}
\newcommand{\R}{\mathbb{R}}
\newcommand{\Z}{{\mathbb{Z}}}
\newcommand{\C}{{\mathbb{C}}}
\newcommand{\purge}[1]{}
\title[12, 24 and beyond]{12, 24 and beyond}
\author[L. Godinho, F. von Heymann, S. Sabatini]{Leonor Godinho, Frederik von Heymann and Silvia Sabatini}
\address{Center for Mathematical Analysis, Geometry and Dynamical Systems, Instituto
Superior T\'ecnico, Universidade de Lisboa, Av. Rovisco Pais, 1049-001 Lisboa, Portugal}
\email{lgodin@math.ist.utl.pt}
\address{Mathematisches Institut, Universit\"at zu K\"oln, Weyertal 86-90, D-50931 K\"oln, Germany}
\email{f.vonheymann@uni-koeln.de}
\address{Mathematisches Institut, Universit\"at zu K\"oln, Weyertal 86-90, D-50931 K\"oln, Germany}
\email{sabatini@math.uni-koeln.de}
\subjclass[2010]{52B20, 14M25, 14J45, 53D20}
\keywords{Reflexive polytopes, monotone symplectic manifolds}
\date{\today}
\begin{document}

\begin{abstract}
We generalize the well-known ``12" and ``24" Theorems for reflexive polytopes of dimension 2 and 3 to any smooth reflexive polytope. 
Our methods apply to a wider category of objects, here called reflexive GKM graphs, that are associated with certain monotone  
symplectic manifolds which do not necessarily admit a toric action.

As an application, we provide bounds on the Betti numbers for certain monotone Hamiltonian spaces which depend on the \emph{minimal Chern number}
of the manifold. 
\end{abstract}
\maketitle

\tableofcontents

\section{Introduction}\label{intro}

Reflexive polytopes were introduced by Batyrev \cite{Bat} in the context of mirror symmetry. Since then there has been much work to study the interplay between
the combinatorial properties of these polytopes and the geometry of the underlying toric varieties. In particular, the polar dual $\Delta^*$ of a reflexive polytope $\Delta$
is also reflexive, and the pairs $\Delta$ and $\Delta^*$ satisfy a 
surprising combinatorial property in dimensions 2 and 3, involving the relative length of their edges.
We recall that the relative length $l(e)$ of
an edge $e$ of $\Delta$ is the number of its integral points minus 1.

\begin{thm}\label{12 24 comb}
Let $\Delta$ be a reflexive polytope of dimension $n$ with edge set $\Delta[1]$. 
\begin{itemize}
\item[$\bullet$] If $n=2$ then 
\begin{equation}\label{12 general}
\sum_{e\in \Delta[1]} l(e) + \sum_{f\in \Delta^*[1]} l(f)=12\,;
\end{equation}
\item[$\bullet$] If $n=3$ then 
\begin{equation}\label{24 general}
\sum_{e\in \Delta[1]} l(e)l(e^*)=24\,,
\end{equation}
\end{itemize}
where $\Delta^*[1]$ denotes the edge set of the dual polytope $\Delta^*$, and $e^*$ is the edge in $\Delta^*[1]$ dual to $e\in \Delta[1]$.
\end{thm}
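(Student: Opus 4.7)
The plan is to prove both identities via toric geometry. To each reflexive polytope $\Delta$ of dimension $n$ one associates a projective toric variety $X_\Delta$, which is Gorenstein Fano: the anticanonical divisor $-K_{X_\Delta}$ is ample and Cartier, and the corresponding polytope of global sections is $\Delta$ itself. Toric varieties have vanishing higher cohomology of the structure sheaf, whence $\chi(\mathcal{O}_{X_\Delta})=1$. My strategy is, in each dimension, to invoke a Riemann--Roch type identity on a suitable toric resolution and then translate both sides into lattice-point data of $\Delta$ and $\Delta^*$.

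For $n=2$ I would first pass to a toric crepant resolution $\pi\colon Y\to X_\Delta$, obtained by subdividing the fan of $X_\Delta$ so that every lattice point on $\partial\Delta^*$ becomes a ray. In dimension two this refinement is automatically smooth, and it is crepant because each new ray is generated by a primitive vector at lattice distance $1$ from the origin. Noether's formula on the smooth rational surface $Y$ then reads
\[
(-K_Y)^2 + \chi_{\mathrm{top}}(Y) \;=\; 12\,\chi(\mathcal{O}_Y) \;=\; 12.
\]
Both summands admit a combinatorial reading. Using the volume formula for ample toric divisors and Pick's theorem applied to the reflexive polygon $\Delta$ (with exactly one interior lattice point), one obtains $(-K_Y)^2 = 2\,\mathrm{Area}(\Delta) = \sum_{e\in\Delta[1]} l(e)$; meanwhile $\chi_{\mathrm{top}}(Y)$ equals the number of torus fixed points of $Y$, hence the number of rays in the refined fan, i.e.\ the total count of boundary lattice points of $\Delta^*$, namely $\sum_{f\in\Delta^*[1]} l(f)$. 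Adding these identifies the left-hand side of \eqref{12 general}.

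For $n=3$ the analogue is the top-degree piece of Hirzebruch--Riemann--Roch: on a smooth toric Fano threefold $Y$ with $\chi(\mathcal{O}_Y)=1$ one has $\int_Y c_1(Y)\,c_2(Y)=24$. When $\Delta$ is smooth I would take $Y=X_\Delta$ directly; otherwise I would pass to a (partial) crepant toric resolution and check that $\int c_1 c_2$ is preserved. The crucial step is then to identify this integral with $\sum_{e\in\Delta[1]} l(e)\,l(e^*)$ by decomposing $c_2(Y)$ as a weighted sum, indexed by the one-dimensional torus-orbit closures $Z_e\simeq\mathbb{P}^1$ corresponding to edges $e$ of $\Delta$, of the classes $[Z_e]$: one uses $-K_Y\cdot[Z_e]=l(e)$ (the moment-image length of $Z_e$), and that the multiplicity with which $[Z_e]$ occurs in $c_2(Y)$ equals $l(e^*)$, which records the normal combinatorics along $Z_e$ via the dual edge of $\Delta^*$.

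The hardest step will be the three-dimensional intersection-theoretic identification, namely the edge-wise decomposition of $c_2(Y)$ with multiplicity $l(e^*)$. This demands a careful local calculation at each $Z_e$ using the primitive generators of the two rays that span the corresponding two-cone, together with a description of the normal directions in terms of $e^*\subset\Delta^*$; for non-smooth $X_\Delta$ one must additionally verify that the chosen crepant resolution does not disturb this decomposition. Once the identification is in place, \eqref{24 general} drops out of $\int_Y c_1 c_2=24$.
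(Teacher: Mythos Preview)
Two remarks on context and one on the mathematics.

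First, the paper does not give its own proof of Theorem~\ref{12 24 comb} in the stated generality: it quotes the result as classical, pointing to \cite{PRV,HS} for $n=2$ and to Batyrev--Dais \cite{BD} for $n=3$. The machinery developed in the paper only reproves the \emph{Delzant} special case (Corollary~\ref{cor combinatorics}), where $l(e^*)=1$ for every edge; that argument goes through normal contributions (Proposition~\ref{length_Delzant} and Theorem~\ref{combinatorics 2}), or equivalently through the Hirzebruch genus. Your proposal is therefore not a variant of the paper's proof but a sketch of the classical toric Riemann--Roch arguments the paper is citing.

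Second, your $n=2$ argument is correct as written and is essentially the standard Noether-plus-Pick proof.

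Third, in the $n=3$ step there is a small inaccuracy that, left uncorrected, would be a genuine gap. On a smooth toric threefold one has $c_2(Y)=\sum_\tau [V(\tau)]$ with \emph{all} multiplicities equal to~$1$, summed over the $2$-cones of the refined fan; no curve ``$Z_e$'' appears with multiplicity $l(e^*)$. The correct bookkeeping is via the projection formula: writing $c_1(Y)=\pi^*(-K_X)$ (crepancy),
\[
\int_Y c_1(Y)c_2(Y)\;=\;\sum_{\tau\in\Sigma_Y(2)} (-K_X)\cdot\pi_*[V(\tau)].
\]
Each $2$-cone $\tau$ whose relative interior lies inside a maximal cone of $\Sigma_X$ is contracted by $\pi$, so contributes $0$. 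The remaining $2$-cones are exactly the $l(e^*)$ unit subdivisions of each wall $\sigma_e\in\Sigma_X(2)$, and for each of these $\pi_*[V(\tau)]=[V_X(\sigma_e)]$ with degree~$1$, whence a contribution $(-K_X)\cdot[V_X(\sigma_e)]=l(e)$. Summing gives $\sum_e l(e)\,l(e^*)$. You also need, and should state, that a smooth crepant toric resolution exists for every $3$-dimensional reflexive polytope: this holds because each facet of $\Delta^*$ is a lattice polygon and hence admits a unimodular triangulation on its full set of lattice points. With these two points made explicit your $n=3$ argument goes through.
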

Theorem \ref{12 24 comb} has many proofs. 
A non enlightening one is given by exhaustion, since there is only a finite number of reflexive polytopes in each dimension (up to lattice isomorphisms).
However, this method does not add meaning to the statement, in the sense that it does not explain where the 12 and 24 come from. 
Equation \eqref{12 general} has deeper and more intriguing proofs involving, for instance, modular forms, toric geometry and certain relations in $SL_2(\Z)$ (see the beautiful notes \cite{PRV} and 
\cite{HS}). 
The first non-trivial proof of \eqref{24 general} involves toric geometry and was given by Dais, who showed that it is a direct consequence of \cite[Corollary 7.10]{BD}, a result by Batyrev and himself.\footnote{In \cite[Cor.\ 7.10]{BD} the Euler characteristic $e_{\mathrm{st}}(\overline{Z}_f)$ is $24$, since $\overline{Z}_f$ is a K3 surface for $d=3$. Notice that there is a misprint: $(-1)^i$ on the right hand side of the formula should be $(-1)^{i-1}$.} Another purely combinatorial proof
is given in \cite[Section 5.1.2]{HNP}.

There have been several attempts to generalize Theorem \ref{12 24 comb}. In \cite[Sect.\ 9.2]{PRV}, Poonen and Rodriguez-Villegas hint at the possibility of
using the Todd genus of the associated toric variety $M_{\Delta}$ to retrieve combinatorial information on $\Delta$.
In dimensions 2 and 3 the Todd genus is extremely easy to compute, since it
is a combination of the Chern numbers $c_n[M_\Delta]$ and $c_1c_{n-1}[M_\Delta]$. However, in higher dimensions it becomes complicated as
it involves more Chern numbers.  

In this work we use the Chern number $c_1c_{n-1}[M_\Delta]$ to generalize Theorem \ref{12 24 comb} to all Delzant reflexive polytopes, i.e.\ those arising from smooth Fano toric varieties,
since it is exactly the sum of the relative lengths of the edges of $\Delta$.
The key idea behind our results is the existence of a differential equation relating the more general Hirzebruch genus to this Chern number \cite[Theorem 2]{Sal}:
\begin{equation}\label{eq: hirzebruch}
\sum_{e\in \Delta[1]}l(e)=c_1c_{n-1}[M_\Delta]=6 \frac{d^2\chi_y(M_\Delta)}{dy^2}|_{y=-1}+\frac{5n-3n^2}{12}\chi_{-1}(M_\Delta)\,.
\end{equation}
The Hirzebruch genus
is \emph{rigid} for symplectic manifolds admitting Hamiltonian $S^1$-actions \cite[Cor.\ 3.1]{GoSa}, hence in particular for smooth Fano toric varieties. 
This allows us to obtain the following result (see Sect.~\ref{subsec:thmA} and page~\pageref{proof symplectic tools}).

\begin{thm}\label{main combinatorics}
Let $\Delta$ be a Delzant reflexive polytope of dimension $n\geq 2$.  
Denote by $\Delta[1]$ the set of its edges,
and by $l(e)$ the relative length of $e$, for every $e\in \Delta[1]$. 
Let $\mathbf{f}=(f_0,\ldots,f_n)$ be the $f$-vector of $\Delta$. 
Then $\displaystyle\sum_{e\in \Delta[1]}l(e)$ only depends on $\mathbf{f}$. 
More precisely,
\begin{equation}\label{formula f}
\sum_{e\in \Delta[1]} l(e)=12f_2+(5-3n)f_1\,.
\end{equation}
\end{thm}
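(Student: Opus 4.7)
The plan is to combine \eqref{eq: hirzebruch} with a purely combinatorial evaluation of the Hirzebruch $\chi_y$-genus of $M_\Delta$. Since $\Delta$ is Delzant and reflexive, $M_\Delta$ is a smooth projective Fano toric variety; by Danilov's theorem its cohomology is of Hodge--Tate type, with $h^{p,q}(M_\Delta)=0$ for $p\neq q$ and $h^{p,p}(M_\Delta)=h_p$, the $p$-th entry of the $h$-vector of the simple polytope $\Delta$. Setting $H(u):=\sum_{p=0}^n h_p u^p$, one then has $\chi_y(M_\Delta)=H(-y)$, and in particular $\chi_{-1}(M_\Delta)=H(1)=f_0$ (the Euler characteristic of $M_\Delta$ equals the number of vertices of $\Delta$).

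The next step is to express $H''(1)$ in terms of the $f$-vector through the classical $f$-to-$h$ relation for a simple $n$-polytope,
$$
H(u)=\sum_{j=0}^{n} f_j\,u^{n-j}(1-u)^j.
$$
A short Taylor expansion about $u=1$, retaining only the coefficient of $(u-1)^2$, yields
$$
\frac{d^2\chi_y(M_\Delta)}{dy^2}\bigg|_{y=-1}=H''(1)=n(n-1)f_0-2(n-1)f_1+2f_2.
$$

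Plugging these two evaluations into \eqref{eq: hirzebruch} expresses $\sum_{e\in\Delta[1]}l(e)$ as an $n$-dependent linear combination of $f_0$, $f_1$, and $f_2$. The final step will be to eliminate $f_0$ using the elementary valence identity $n f_0 = 2 f_1$, which holds for any simple $n$-polytope since each vertex meets exactly $n$ edges and each edge has two endpoints. The main obstacle I anticipate is precisely this last bookkeeping: the $f_0$-coefficient arising from $6 H''(1)$, combined with the $\chi_{-1}=f_0$ contribution of \eqref{eq: hirzebruch}, must after applying $n f_0 = 2 f_1$ leave a vanishing $f_0$-coefficient together with exactly $5-3n$ in front of $f_1$ and $12$ in front of $f_2$. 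Once this cancellation is checked, \eqref{formula f} follows; reflexivity of $\Delta$ enters only through \eqref{eq: hirzebruch} (via the Fano condition used to identify $\sum_e l(e)$ with $c_1 c_{n-1}[M_\Delta]$), while every remaining step rests only on simplicity of $\Delta$ and smoothness of $M_\Delta$.
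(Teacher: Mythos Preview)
Your plan is sound and coincides with the paper's third proof (the Hirzebruch-genus argument in Section~\ref{subsec:thmA}): the identity \eqref{sum volumes} is exactly $6H''(1)+\tfrac{5n-3n^2}{2}\,\chi(M)$ written through $b_{2j}=h_j$, and you streamline it by computing $H''(1)$ directly from the $f$--$h$ relation rather than passing through Betti numbers. One caveat for the bookkeeping you postpone: the displayed coefficient $\tfrac{5n-3n^2}{12}$ in \eqref{eq: hirzebruch} is a misprint for $\tfrac{5n-3n^2}{2}$ (test on $\mathbb{CP}^2$, or compare with \eqref{sum volumes}); with the correct factor the $f_0$-coefficient becomes $\tfrac{n(9n-7)}{2}$, so $nf_0=2f_1$ converts it to $(9n-7)f_1$, and $(9n-7)-12(n-1)=5-3n$ as required.

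The paper also supplies two Hirzebruch-free arguments --- a purely combinatorial one summing the normal contributions $a_i^e$ over all edges (Theorem~\ref{combinatorics 2} together with Proposition~\ref{length_Delzant}), and a symplectic-toric one reducing to the $2$-faces via Proposition~\ref{oda} --- which have the virtue of making the constant $12$ visibly arise from the classical $2$-dimensional identity, whereas your route trades that transparency for brevity.
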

Expressing \eqref{formula f} in terms of the $h$-vector $\mathbf{h}=(h_0,\ldots,h_n)$ of $\Delta$ we obtain
\begin{equation}\label{def C(n,h)}
\sum_{e\in \Delta[1]} l(e)=C(n,\mathbf{h}):=
\begin{cases}
12\displaystyle\sum_{k=1}^{m} \Big[ k^2h_{m-k}\Big] - m\sum_{k=0}^n h_{k}& \quad\mbox{if }n=2m\mbox{ is even}\\
& \\
12\displaystyle\sum_{k=1}^{m}\Big[ k(k+1)h_{m-k}\Big] - (m-1)\sum_{k=0}^n h_{k} & \quad\mbox{if }n=2m+1\mbox{ is odd.}\\
 \end{cases}
\end{equation}

\begin{rmk}\label{equiv 3}
$\;$\vspace{0.1cm}
\begin{enumerate}
\item Using the Dehn-Sommerville relations for simple polytopes, \eqref{formula f} is equivalent to
\begin{equation}\label{formula f 2}
\sum_{e\in \Delta[1]} l(e)= \frac{24\,f_3}{n-2}+(3-n)\,f_1\,,
\end{equation}
for all $n\geq 3$.

\item Theorem \ref{main combinatorics} admits an immediate generalization to {\bf smooth Gorenstein} polytopes. Indeed, if
$r$ is the index of a Gorenstein polytope $\Delta$, then $r\Delta$ is reflexive and Theorem \ref{main combinatorics} implies that
\begin{equation}\label{gor}
\sum_{e\in \Delta[1]} l(e)=\frac{1}{r}\big(12f_2+(5-3n)f_1\big)\,.
\end{equation}

\end{enumerate}
\end{rmk}
We provide two alternative simpler proofs of Theorem \ref{main combinatorics} that do not involve the Hirzebruch genus. One is entirely combinatorial (see Sect.~\ref{rp} and page~\pageref{page:combinatorics}), and the other  uses symplectic toric geometry (see Sect.~\ref{mtm} and page~\pageref{proof symplectic}).

\begin{rmk}\label{non smooth}
The non-smooth case builds on our results, and is the subject of a forthcoming paper \cite{GHS2}.
\end{rmk}

Theorem \ref{main combinatorics} is a very special case of a much more general phenomenon that does not involve toric geometry, but only a much smaller symmetry.  
From the Delzant Theorem \cite{De}, to every $n$-dimensional smooth--or Delzant--polytope one can associate a compact symplectic manifold of dimension $2n$ endowed with a Hamiltonian action of a torus 
of dimension $n$, called {\bf symplectic toric manifold} (see Sect.\ \ref{sts}). When the torus acting is just a circle, and the fixed points are isolated, the symplectic manifold $(M,\omega)$ together with the moment map
$\psi\colon (M,\omega)\to \R$ is called a {\bf Hamiltonian $S^1$-space}. This category includes that of {\bf Hamiltonian GKM spaces}, introduced in the seminal paper \cite{GKM}, which also plays an important role in this paper. 
Many of these Hamiltonian $S^1$-spaces posses special sets of smoothly embedded $2$-spheres, called {\bf toric 1-skeletons}. Geometrically, the class of a toric 1-skeleton in $H_2(M;\Z)$ is  Poincar\'e Dual to $c_{n-1}$ (see Lemma~\ref{P Dual}). When the manifold is symplectic toric with moment polytope $\Delta$,
the 1-skeleton is unique, and
corresponds to the pre-image of the edges of $\Delta$ by the moment map. A similar statement is true for Hamiltonian GKM-spaces (see Sect.\ \ref{gt3}). Note that there are no examples known of Hamiltonian $S^1$-spaces that do not admit a toric $1$-skeleton.

If $\Delta$ is the Delzant polytope associated to a symplectic toric manifold $(M,\omega,\psi)$, the $h$-vector of $\Delta$ corresponds to the vector $\mathbf{b}:=(b_0,\ldots,b_{2n})$
of even Betti numbers 
of $M$. Then Theorem \ref{main combinatorics} is a consequence of a more general result,
where we define
$C(n,\mathbf{b})$ to be $C(n,\mathbf{h})$, with $h_j$ replaced by $b_{2j}$, for all $j=0,\ldots,n$, namely

\begin{equation}\label{def C(n,b)}
C(n,\mathbf{b}):=
\begin{cases}
12\displaystyle\sum_{k=1}^{\frac{n}{2}} \Big[ k^2b_{n-2k}(M)\Big]- \frac{n}{2}\chi(M)& \quad\mbox{if }n \mbox{ is even}\\
& \\
12\displaystyle\sum_{k=1}^{\frac{n-1}{2}}\Big[ k(k+1)b_{n-1-2k}(M)\Big] -  \left(\frac{n-3}{2}\right)\chi(M)& \quad\mbox{if }n \mbox{ is odd.}\\
\end{cases}
\end{equation}

\begin{thm}\label{A}
Let $(M,\omega,\psi)$ be a Hamiltonian $S^1$-space of dimension $2n$, and let $\mathbf{b}:=(b_0,\ldots,b_{2n})$
be the vector of its even Betti numbers. 
Let $c_1\in H^2(M;\Z)$ be the first Chern class of the tangent bundle. 
If $(M,\omega,\psi)$ admits a toric 1-skeleton $\mathcal{S}=\cup_{e\in E}\{S_e^2\}$, then
the sum of the integrals of $c_1$ on the spheres corresponding to the toric 1-skeleton only depends on the topology of $M$. More precisely 
\begin{equation}\label{intro:formula}
\sum_{e\in E} c_1[S^2_e]=C(n,\mathbf{b})\,.
\end{equation}
In particular, if $(M,\omega,\psi)$ is a symplectic toric manifold with moment polytope $\Delta$, and $\mathcal{S}=\cup_{e\in \Delta[1]}S_e^2$ is the set of spheres in one-to-one correspondence with the edges
 of the moment polytope $\Delta$, then 
\begin{equation}\label{intro:formula2}
\sum_{e\in E} c_1[S^2_e]=12f_2+(5-3n)f_1 \,,
\end{equation}
where $\mathbf{f}=(f_0,\ldots,f_n)$ is the $f$-vector of $\Delta$.
\end{thm}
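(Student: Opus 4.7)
The plan is to reduce the sum $\sum_e c_1[S^2_e]$ to a single Chern number, then apply Salamon's universal identity to express that Chern number via the Hirzebruch $\chi_y$-genus, and finally evaluate the genus using the Morse-theoretic structure of the Hamiltonian $S^1$-action.

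First, by Lemma~\ref{P Dual} the total class $\sum_{e\in E}[S^2_e]\in H_2(M;\Z)$ of a toric 1-skeleton is Poincar\'e dual to $c_{n-1}(TM)$, so pairing with the first Chern class gives
\[
\sum_{e\in E} c_1[S^2_e] \;=\; \langle c_1 \cdot c_{n-1},[M]\rangle \;=\; c_1 c_{n-1}[M].
\]
Second, although \eqref{eq: hirzebruch} is stated for a toric $M_\Delta$, it is in fact a universal identity among Chern numbers: the $\chi_y$-genus is a polynomial in the Chern roots, and the claim is a computation in this universal series. It therefore holds on any compact almost complex manifold, in particular on $(M,\omega)$ with a compatible almost complex structure, so that
\[
c_1 c_{n-1}[M] \;=\; 6\,\frac{d^2\chi_y(M)}{dy^2}\bigg|_{y=-1} \;+\; \frac{5n-3n^2}{12}\,\chi_{-1}(M).
\]

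Third, I would use that for a Hamiltonian $S^1$-space (by hypothesis with isolated fixed points) the moment map $\psi$ is a perfect Morse function whose critical points of Morse index $2k$ count exactly $b_{2k}(M)$, so all odd Betti numbers vanish. Combined with the rigidity of $\chi_y$ under Hamiltonian $S^1$-actions \cite[Cor.~3.1]{GoSa} and Atiyah--Bott--Berline--Vergne localization at the fixed points, this yields the identity
\[
\chi_y(M) \;=\; \sum_{k=0}^{n} b_{2k}(M)\,(-y)^k,
\]
from which $\chi_{-1}(M)=\chi(M)$ and $\chi_y''(M)|_{y=-1}=\sum_k k(k-1)\,b_{2k}(M)$. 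Substituting into the previous display gives
\[
c_1 c_{n-1}[M] \;=\; 6\sum_{k=0}^n k(k-1)\,b_{2k}(M) \;+\; \frac{5n-3n^2}{12}\,\chi(M).
\]

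The last step is to rewrite this symmetrically via Poincar\'e duality $b_{2k}(M)=b_{2n-2k}(M)$: using the averaging $\sum_k k(k-1)b_{2k}=\tfrac12\sum_k\bigl[k(k-1)+(n-k)(n-k-1)\bigr]b_{2k}$ and reindexing so that the sum runs only up to $\lfloor n/2\rfloor$, the formula splits according to the parity of $n$ (depending on whether the middle Betti number $b_n$ appears) and matches the two cases in the definition of $C(n,\mathbf{b})$ in \eqref{def C(n,b)}; this proves \eqref{intro:formula}. The specialization \eqref{intro:formula2} then follows from the identification of the $h$-vector of the moment polytope with the vector of even Betti numbers of $M_\Delta$, together with Theorem~\ref{main combinatorics}. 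The main obstacle is not conceptual --- Salamon's identity and the rigidity of $\chi_y$ are invoked as black boxes --- but lies in the careful bookkeeping of this final symmetrization, in particular in reproducing the even/odd case split of \eqref{def C(n,b)}.
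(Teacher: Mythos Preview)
Your strategy matches the paper's exactly: Lemma~\ref{P Dual} converts the sum to $c_1c_{n-1}[M]$, Salamon's identity plus rigidity of $\chi_y$ (packaged in the paper as a single citation of \cite[Cor.~3.1]{GoSa}, landing directly on \eqref{sum volumes}) rewrites this as $\sum_j b_{2j}\bigl[6j(j-1)+\tfrac{5n-3n^2}{2}\bigr]$, and the Poincar\'e-duality symmetrization produces the even/odd split of $C(n,\mathbf{b})$. One concrete slip to fix before you do the bookkeeping: the coefficient $\tfrac{5n-3n^2}{12}$ you transcribed from \eqref{eq: hirzebruch} is a misprint --- the correct constant, as used in \eqref{sum volumes} and easily checked on $\mathbb{CP}^2$ (where $c_1^{\,2}=9$ while your version yields $23/2$), is $\tfrac{5n-3n^2}{2}$, and with the wrong constant the final match to $C(n,\mathbf{b})$ will fail.
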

The proof of Theorem \ref{A} is given in Section \ref{gt3}. For the particular case of \eqref{intro:formula2} see Theorem \ref{formula c1}.

Inspired by the Mukai conjecture \cite{Muk}, in Sect. \ref{mhs} we apply this theorem to {\bf monotone Hamiltonian $S^1$-spaces} $(M,\omega,\psi)$, namely those for which
$c_1=r[\omega]$, for some $r>0$,  obtaining restrictions
for the Betti numbers of $M$ depending on the {\bf index} $k_0$ of $(M,\omega)$. This is defined as the largest integer $k$ such that $c_1=k \,\eta$, for some non-zero $\eta\in H^2(M;\Z)$.
In \cite[Cor.\ 1.3]{Sa} it is shown that, for Hamiltonian $S^1$-spaces, one has $1\leq k_0\leq n+1$, the same bound that holds for Fano manifolds \cite[Cor.\ 7.17]{Mi}. Here we prove that for every $k_0\geq n-2$, if the sequence of even Betti numbers is unimodal\footnote{A sequence
$(b_0,\ldots,b_N)$ is called unimodal if $b_i\leq b_{i+1}$ for all $i\leq \frac{N}{2}-1$.}, then there are finitely
many possibilities for the Betti numbers of $M$. In particular, if $k_0=n+1$, then $b_{2j}=1$ for all $j=0,\ldots,n$. 
If $k_0=n$ then $b_{2j}=1$ for all $j=0,\ldots,n$ for $n$ odd, and, if  $n$ is even, $b_{2j}=1$ for all $j\in \{0,\ldots,n\}\setminus \{\frac{n}{2}\}$ and $b_n=2$
(see Corollaries \ref{cor 2}, \ref{cor 3.1} and \ref{cor 3}). For smooth toric Fano manifolds, we obtain restrictions for the possible 
 $h$-vectors (see Corollaries \ref{inequalities h f} and \ref{cor k0}). The results in Corollary \ref{cor k0} are not new, since Fano varieties of large index are completely classified.
 Nevertheless, our methods are different and do not involve any of the algebraic/toric geometric tools usually used in this classification.

In Sect.\ \ref{mg} we generalize the concept of a reflexive (Delzant) polytope to that of a {\bf reflexive (GKM) graph}, which shares many of its properties, and we prove
the analogue of Theorem \ref{main combinatorics} for these objects (see Corollary \ref{main combinatorics 2}). Finally, in Sect.\ \ref{flags} we exhibit a class of
reflexive GKM graphs, namely those associated with coadjoint orbits endowed with a monotone symplectic structure.

\begin{figure}[htbp]
\begin{center}
\includegraphics[width=12cm]{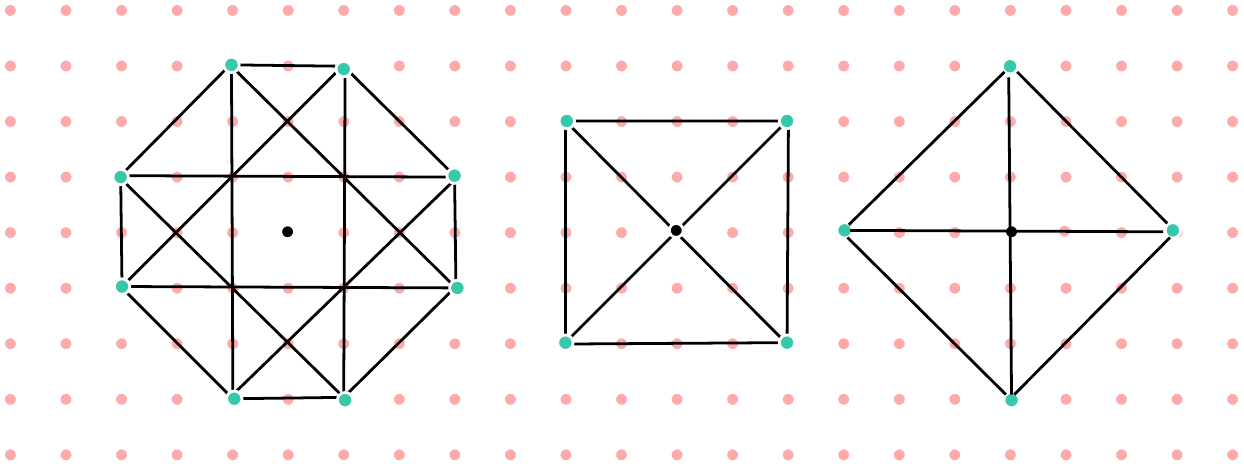}
\caption{Examples of reflexive GKM graphs, see Example \ref{a2b2}.}
\label{monotone graphs}
\end{center}
\end{figure}

\textbf{Acknowledgements} We would like to thank
Dimitrios Dais for explaining the proof of \eqref{24 general} to us, and Michael Lennox Wong for helping us with the proof of Proposition \ref{graph flag mon}. 
Moreover, we are grateful to Klaus Altman, Christian Haase, Lutz Hille, Benjamin Nill, Milena Pabiniak, Sinai Robins, Thomas Rot, J\"org Sch\"urmann, and Kristin Shaw for fruitful discussions.

\section{Theorem \ref{main combinatorics}: the combinatorial proof}
\subsection{Delzant polytopes}
Consider $\R^n$ with the standard scalar product $\langle \cdot, \cdot  \rangle$ and lattice $\Z^n\subset \R^n$. 
We recall that an integral vector $w\in \Z^n$ is called {\bf primitive} in the lattice $\Z^n$ if $w=m\cdot \widetilde{w}$, for some $\widetilde{w}\in \Z^n$ and $m\in \Z$,  implies $m=\pm 1$. 

Let $\Delta$ be an $n$-dimensional polytope in $\R^n$. Then $\Delta$
admits a (unique) minimal representation as an intersection of half spaces
\begin{equation}\label{delta half}
\Delta=\bigcap_{i=1}^k\{x\in \R^n \mid \langle x , l_i \rangle \leq m_i\}\,.
\end{equation}
 The hyperplanes $H_i=\{x\in \R^n \mid \langle x , l_i \rangle = m_i\}$ are exactly those supporting the $(n-1)$-dimensional faces of $\Delta$, called \textbf{facets}.  
 If $\Delta$ is \textbf{integral}, namely all the vertices
belong to $\Z^n$, then
 each $l_i$ in \eqref{delta half}
can be chosen to be in $\Z^n$, or more precisely it can be chosen to be the (unique) primitive \emph{outward normal} vector to $H_i$. With such a choice,
the $m_i$'s in the equations above are uniquely determined.  
\begin{defin}\label{delzant def}
Let $\Delta$ be an $n$-dimensional polytope in $\R^n$, and consider the lattice $\Z^n\subset \R^n$. Such a polytope is called {\bf Delzant} if:
\begin{itemize}
\item[(D1)] it is \emph{simple}, i.e.  there are exactly $n$ edges meeting at each vertex;
\item[(D2)] it is \emph{rational}, i.e. the edges meeting at each vertex $v$ are of the form $v+t w_i$ with $t\geq 0$ and $w_i\in \Z^n$;
\item[(D3)] it is \emph{smooth} at each vertex, i.e. for every vertex  the corresponding  $w_1,\ldots,w_n\in \Z^n$ defined in \emph{(D2)} can be chosen to be a basis of $\Z^n$ (i.e. $\Z\langle w_1,\ldots,w_n \rangle = \Z^n$).
\end{itemize}
\end{defin}
Henceforth, the vectors $w_1,\ldots,w_n$ defining the directions of the edges at $v$ are always chosen so that (D2) and (D3) are satisfied, and are called the {\bf weights} at the vertex $v$. 

\begin{rmk}\label{connections}
Delzant polytopes are also known in  combinatorics and toric geometry literature as \emph{smooth} or \emph{unimodular polytopes} (see \cite[Def.\ 2.4.2 (b)]{Cox})
\end{rmk}
\begin{defin}\label{hv}
Let $\Delta$ be an $n$-dimensional simple polytope. 
\begin{itemize}
\item[$\bullet$]  We denote the set of its $i$-dimensional faces  by $\Delta[i]$, for all $i=0,\ldots,n$.
\item[$\bullet$] The $f$-vector of $\Delta$ is given by $\mathbf{f}=(f_0,\ldots,f_n)$, where $f_i$ is the cardinality of $\Delta[i]$;
\item[$\bullet$] The $h$-vector of $\Delta$ is given by $\mathbf{h}=(h_0,\ldots,h_n)$, where
$$
h_j=\sum_{i=0}^j (-1)^{j-i}\binom{n-i}{n-j}f_{n-i}\quad\mbox{for all}\quad j=0,\ldots,n\,.
$$
\end{itemize}
\end{defin}
Some authors define the $h$-vector for simplicial polytopes, i.e., the duals of simple polytopes (see, e.g., \cite{Zie, Ad}). Note that dualization transforms $f_i$ into $f_{n-i-1}$. 

\begin{prop}\label{int_dist}
Let $\Delta$ be an $n$-dimensional Delzant polytope, and let $u,v\in\Delta[0]$ be vertices that are connected by an edge $e$ in $\Delta[1]$. 
Let $\{w_1,\ldots,w_n\}$ and $\{\tilde{w}_1,\ldots,\tilde{w}_n\}$ be the weights at $u$ and $v$, respectively. We may assume that $w_1$ and $\tilde{w}_1$ point along $e$, hence
$\tilde{w}_1=-w_1$.

Then, for every $i\in\{2,\ldots,n\}$, there exists $j_i\in\{2,\ldots,n\}$ such that 
\[
w_i-\tilde{w}_{j_i}=a_iw_1
\]
for some $a_i \in \Z$. 
\end{prop}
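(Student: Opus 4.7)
The plan is, for each $i \in \{2,\ldots,n\}$, to exhibit a 2-dimensional face $F_i$ of $\Delta$ whose vertex set contains both $u$ and $v$; the weight $\tilde w_{j_i}$ will then be the one associated to the edge of $F_i$ emanating from $v$ other than $e$. The required relation $w_i - \tilde w_{j_i} = a_i w_1$ will come from the fact that $F_i$ lives in a 2-dimensional affine plane, combined with the smoothness conditions (D3) at $u$ and at $v$.

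First I would invoke simplicity (D1): at a vertex of a simple polytope, the $k$-faces through it are in bijection with the $k$-subsets of the edges incident to that vertex. In particular, the two edges at $u$ in directions $w_1$ and $w_i$ determine a unique 2-face $F_i$ of $\Delta$ containing $u$. Since $e \subset F_i$, also $v \in F_i$, and $F_i$ has exactly two edges at $v$: one is $e$ itself, corresponding to $\tilde w_1 = -w_1$, and the other determines a unique index $j_i \in \{2,\ldots,n\}$.

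Because $F_i$ lies in the affine $2$-plane $u + \mathrm{span}_{\R}(w_1,w_i)$, the vector $\tilde w_{j_i}$ admits a decomposition $\tilde w_{j_i} = b\, w_i + c\, w_1$ with $b,c \in \R$. Smoothness at $u$ tells us that $\{w_1,\ldots,w_n\}$ is a $\Z$-basis of $\Z^n$, and since $\tilde w_{j_i}\in\Z^n$ this already forces $b,c\in\Z$. Inverting the relation to write $w_i = \tfrac{1}{b}(\tilde w_{j_i} + c\,\tilde w_1)$, smoothness at $v$ tells us that $\{\tilde w_1,\ldots,\tilde w_n\}$ is also a $\Z$-basis of $\Z^n$, so $b \in \{\pm 1\}$.

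The one delicate point, which I expect to be the main obstacle, is ruling out $b=-1$. For this I would use convexity of $F_i$: both $w_i$ (at $u$) and $\tilde w_{j_i}$ (at $v$) point from their respective endpoints of $e$ into the bounded convex polygon $F_i$, so they must lie on the same side of the affine line carrying $e$. Projecting $\tilde w_{j_i} = b\,w_i + c\,w_1$ orthogonally to $w_1$ identifies the sign of $b$ with the sign of this pairing, forcing $b=1$. Setting $a_i := -c \in \Z$ then yields $w_i - \tilde w_{j_i} = a_i w_1$, completing the argument.
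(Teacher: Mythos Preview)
Your proposal is correct and follows essentially the same route as the paper's proof: locate the 2-face $F_i$ through $u$ spanned by $w_1$ and $w_i$, use smoothness at $u$ and $v$ to force the coefficient of $w_i$ in $\tilde w_{j_i}$ to be $\pm 1$, and then use convexity of $F_i$ to rule out $-1$. The paper phrases the last step as ``$\beta=-1$ would imply that $e$ has points in the interior of $F_i$'', which is the same geometric observation as your same-side-of-the-line argument.
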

It will be clear from the proof that each $a_i$ is uniquely determined by  the edge $e$ and by the $2$-dimensional face $F_i$
contained in the affine space 
 $u+\R\langle w_1,w_{i}\rangle=v+\R\langle \tilde{w}_1,\tilde{w}_{j_i}\rangle$, 
 for all $i=2,\ldots,n$, where $\R\langle w,\tilde{w}\rangle$ denotes the $\R$-linear span of $w$ and $\tilde{w}$.
 Henceforth we denote $a_i$ by $a_i^e$, and call it the {\bf normal contribution of} $e$ {\bf in (the face)} $F_i$ (containing $e$).

\begin{proof}
Let $F_i$ be the 2-dimensional face of $\Delta$ that is contained in $u+\R\langle w_1,w_{i}\rangle$. 
Consider the edge $e_{j_i}\in \Delta[1]$ starting at $v$ and belonging to $F_i$, and let
$\tilde{w}_{j_i}$ be the weight at $v$ along $e_{j_i}$. Note  that
 $\tilde{w}_{j_i}\in \R\langle w_1,w_{i}\rangle$.

By condition (D3) in Definition \ref{delzant def}, $\{w_1,w_i\}$ can be extended to a lattice basis of $\Z^n$. Hence there exists $\alpha,\beta\in\Z$ such that $\alpha w_1+\beta w_i= \tilde{w}_{j_i}$.
 Since $\tilde{w}_1=-w_1$ and $\{\tilde{w}_1,\tilde{w}_{j_i}\}$ can also be extended to a lattice basis of $\Z^n$, we deduce that $\beta=\pm 1$.
As $\beta=-1$ would imply that $e$ has points in the interior of $F_i$, the claim follows.
\end{proof}

The next theorem is one of the key ingredients to prove Theorem \ref{main combinatorics}. 
\begin{thm}\label{combinatorics 2}
Let $\Delta$ be a Delzant polytope of dimension $n\geq 2$ with $f$-vector $\mathbf{f}=(f_0,\ldots,f_n)$. Then
\begin{equation}\label{sum aie}
\sum_{e\in \Delta[1]} \sum_{i=2}^n a_i^e =12f_2-3 (n-1) f_1
\end{equation}
where the $a_i^e$'s are the normal contributions to the edge $e$.
\end{thm}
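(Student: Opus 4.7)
The plan is to exchange the order of summation, reducing the left-hand side of \eqref{sum aie} to a sum over $2$-faces, and then to prove a $2$-dimensional lemma of ``$12$ theorem'' type for general Delzant polygons. By Proposition \ref{int_dist}, the index $i \in \{2,\ldots,n\}$ in $a_i^e$ records a $2$-face $F_i \in \Delta[2]$ containing $e$, and since $\Delta$ is simple each edge lies in exactly $n-1$ two-faces. Writing $a^e_F$ for $a_i^e$ when $F=F_i$, we obtain
\begin{equation*}
\sum_{e\in\Delta[1]} \sum_{i=2}^n a_i^e \;=\; \sum_{F\in\Delta[2]} \;\sum_{e\subset F} a^e_F,
\end{equation*}
where each $F$ is itself a $2$-dimensional Delzant polygon.

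The key lemma is: for every $2$-dimensional Delzant polygon $F$ with $k_F$ edges,
\begin{equation*}
\sum_{e \subset F} a^e_F \;=\; 12 - 3 k_F.
\end{equation*}
To prove it, I would label the vertices of $F$ cyclically $v_1,\dots,v_{k_F}$, let $e_j$ be the edge from $v_j$ to $v_{j+1}$, and at each $v_j$ let $(w_j^-,w_j^+)$ be the integral basis of weights along $e_{j-1}$ and $e_j$. The relations $w_{j+1}^- = -w_j^+$ and $w_{j+1}^+ = w_j^- - a^{e_j}_F\, w_j^+$ (the latter being Proposition \ref{int_dist} applied inside $F$) translate into the change-of-basis identity
\begin{equation*}
(w_{j+1}^-, w_{j+1}^+) \;=\; (w_j^-, w_j^+) \cdot A_j, \qquad A_j := \begin{pmatrix} 0 & 1 \\ -1 & -a^{e_j}_F \end{pmatrix} \in \mathrm{SL}_2(\Z).
\end{equation*}
Walking once around $F$ returns the basis to itself, so $A_1 \cdots A_{k_F} = I$. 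From this matrix identity one extracts the scalar equality $\sum_j a^{e_j}_F = 12 - 3 k_F$ in one of two ways: by induction on $k_F$ (base case $k_F=3$: the triangle $\mathbb{P}^2$ has all $a = 1$, and a corner-chop move adds one edge with $a=-1$ while decreasing both neighboring $a$'s by one, matching the drop of $3$ in $12 - 3k_F$); or by invoking Noether's formula $K_{X_F}^2 + c_2(X_F) = 12$ on the smooth toric surface $X_F$ associated to $F$, combined with the identifications $a^e_F = D_e \cdot D_e$ and $c_2(X_F) = k_F$.

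Granted the lemma, double-counting the edge-to-$2$-face incidences gives $\sum_{F\in\Delta[2]} k_F = (n-1) f_1$, and therefore
\begin{equation*}
\sum_{e\in\Delta[1]} \sum_{i=2}^n a_i^e \;=\; \sum_{F\in\Delta[2]} (12 - 3 k_F) \;=\; 12 f_2 - 3(n-1) f_1.
\end{equation*}
The main obstacle is the $2$-dimensional lemma; the reduction and the final count are routine. The genuine difficulty is extracting a scalar invariant from the relation $\prod_j A_j = I$ in $\mathrm{SL}_2(\Z)$: abelianising only gives a congruence mod $6$, so one needs either the inductive generation of smooth toric surfaces by corner chops or the input of Noether's formula to pin down the exact value $12 - 3 k_F$.
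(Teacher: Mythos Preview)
Your approach is essentially the paper's: swap the order of summation to reduce to a sum over $2$-faces (this is equation~\eqref{aes} in the paper, with Lemma~\ref{2face} justifying that each $2$-face is again Delzant), invoke the $2$-dimensional identity $\sum_{e\in F[1]} a^e_F = 12 - 3|F[0]|$ (stated as Proposition~\ref{dim2} and cited from Oda/Fulton rather than reproved), and finish by an incidence count. The only cosmetic difference is that the paper counts vertex--face incidences ($\binom{n}{2}$ two-faces through each vertex, then uses $2f_1=nf_0$), whereas you count edge--face incidences ($n-1$ two-faces through each edge) and land directly on $(n-1)f_1$.

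One caution on your sketch of the $2$-dimensional lemma: corner chops starting from the triangle alone do \emph{not} reach every Delzant polygon---$\C P^1\times\C P^1$ and the Hirzebruch trapezoids $F_a$ with $a\ge 2$ are not iterated blow-ups of $\C P^2$. The correct induction runs downward: any Delzant polygon with $k_F\ge 5$ has an edge with $a^e_F=-1$ that can be contracted, and the process terminates at either the triangle ($k_F=3$, all $a^e=1$) or a Hirzebruch trapezoid ($k_F=4$, values $0,a,0,-a$), both of which satisfy $\sum a^e = 12-3k_F$. Your alternative via Noether's formula on the toric surface $X_F$ avoids this issue entirely and is the cleaner route.
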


Theorem \ref{combinatorics 2} is a generalization to every dimension of the following well-known fact in dimension $2$, whose proof is inspired by toric geometry, but can be
made entirely combinatorial (see \cite[Result 57]{Oda}, \cite[pages 43-44]{Ful}, and in a similar fashion \cite[Theorem 5.1.9]{HNP}).

\begin{prop}\label{dim2}
If $\Delta$ is a $2$-dimensional Delzant polytope, then
\[
\sum_{e\in \Delta[1]} a^e = 12-3|\Delta[0]|,
\] 
where $a^e$ is the (only) normal contribution of $e$ (in $\Delta$), as defined in Proposition~\ref{int_dist}.
\end{prop}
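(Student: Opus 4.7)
The plan is to realize $\Delta$ as the moment polytope of a smooth compact toric surface $X_\Delta$ and then apply Noether's formula. I would order the vertices cyclically $v_0,\dots,v_{N-1}$ with $N=|\Delta[0]|=|\Delta[1]|$, denote by $e_k$ the edge from $v_k$ to $v_{k+1}$ and by $u_k\in\Z^2$ its primitive direction, so that the weights at $v_k$ are $\{u_k,-u_{k-1}\}$. Applying Proposition \ref{int_dist} to $e=e_k$ translates into the recursion
\[
u_{k-1}+u_{k+1}=-a^{e_k}\,u_k,
\]
which is the combinatorial core of the argument and coincides with the standard linear relation among consecutive ray generators of the fan of $X_\Delta$.

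From here the strategy has three steps. First, I would identify $a^{e_k}$ with the self-intersection number $D_{e_k}\cdot D_{e_k}$ of the $T$-invariant divisor $D_{e_k}\subset X_\Delta$ attached to $e_k$, using the classical dictionary between the recursion above and intersection theory on smooth toric surfaces. Second, using $c_1(TX_\Delta)=-K_{X_\Delta}=\sum_k D_{e_k}$ together with the intersection table ($D_{e_k}\cdot D_{e_l}=1$ when $e_k,e_l$ share a vertex and $0$ otherwise for $k\neq l$), I would expand
\[
c_1^2[X_\Delta]=\sum_k D_{e_k}^{\,2}+2N=\sum_{e\in\Delta[1]}a^e+2|\Delta[0]|.
\]
Third, since the $T$-fixed points of $X_\Delta$ are in bijection with the vertices of $\Delta$, one has $c_2[X_\Delta]=\chi(X_\Delta)=|\Delta[0]|$, and because $X_\Delta$ is a smooth rational surface with $\chi(\mathcal{O}_{X_\Delta})=1$, Noether's formula yields $c_1^2[X_\Delta]+c_2[X_\Delta]=12$. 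Rearranging gives $\sum_{e\in\Delta[1]}a^e=12-3|\Delta[0]|$, as claimed.

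The main obstacle is Step 1: one must carefully track signs and orientations to ensure that $a^e$ really equals $+D_e^{\,2}$ rather than $-D_e^{\,2}$. I would pin this down by testing the convention on the base case $\mathbb{P}^2$ (for which $a^e=D_e^{\,2}=1$ on every edge) and then verifying that the chosen orientations propagate consistently. As the paper remarks, a purely combinatorial alternative is also available: encode the recursion as a cyclic product of matrices $-ST^{a^{e_k}}\in\mathrm{SL}_2(\Z)$ whose value is $\pm I$, project to the abelianization $\mathrm{SL}_2(\Z)^{\mathrm{ab}}\cong\Z/12$ to obtain $\sum a^e\equiv -3N\pmod{12}$, and pin down the integer constant by a winding-number argument using that the primitive directions $u_k$ wind exactly once around the origin.
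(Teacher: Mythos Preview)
Your proposal is correct. The paper does not actually supply its own proof of this proposition; it simply records it as a ``well-known fact'' and points to \cite[Result~57]{Oda}, \cite[pp.~43--44]{Ful}, and \cite[Thm.~5.1.9]{HNP}. Your main argument---identify $a^{e_k}$ with the self-intersection $D_{e_k}^{\,2}$ via the recursion $u_{k-1}+u_{k+1}=-a^{e_k}u_k$, expand $c_1^2=\sum_k D_{e_k}^{\,2}+2N$, use $c_2=\chi=N$, and invoke Noether's formula---is exactly the toric-geometric proof in Fulton and Oda; the paper itself later phrases the same identity symplectically as Proposition~\ref{oda} and confirms in Remark~\ref{analogy}(1) that $a_i^e=\int_{S^2_e}c_1(\nu_e^j)$, which in dimension~$2$ is precisely $D_e^{\,2}$, resolving your sign worry in Step~1. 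The alternative you sketch at the end (cyclic product in $\mathrm{SL}_2(\Z)$, abelianization to $\Z/12$, winding number) is the purely combinatorial route of \cite{HNP} and \cite{HS} that the paper also alludes to. So both strands of your proposal align with the references the paper invokes.
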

 The idea to prove Theorem \ref{combinatorics 2} is to use Proposition \ref{dim2} for each $2$-dimensional face of $\Delta$. 
In order to do so, we first need to prove the following result.
\begin{lemma}\label{2face}
Given a Delzant polytope $\Delta$ of dimension $n\geq 2$, each face $F\in \Delta[2]$ is a Delzant polytope of dimension $2$ with respect to a lattice $\ell_F\subseteq \Z^n$.
\end{lemma}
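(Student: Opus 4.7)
The plan is to single out two edges of $F$ at a chosen vertex, use their weights in $\Delta$ to define the lattice $\ell_F$, and then propagate the Delzant property around the polygon $F$ using Proposition \ref{int_dist}.

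First I would pick any vertex $v\in F\cap\Delta[0]$. Since $\Delta$ is simple of dimension $n$ and $F$ is a $2$-dimensional face containing $v$, exactly two edges of $\Delta$ meeting at $v$ lie in $F$; after relabeling I may assume their weights are $w_1,w_2$ among the weights $w_1,\dots,w_n$ at $v$ (which form a $\Z$-basis of $\Z^n$ by (D3)). I define
\[
\ell_F:=\Z\langle w_1,w_2\rangle\subseteq \Z^n,
\]
which is a rank-$2$ sublattice of $\Z^n$, and note that $F\subset v+\R\langle w_1,w_2\rangle$. This ensures that $F$ lives in a $2$-plane carrying the lattice $\ell_F$, and that (D2) and (D3) hold at $v$ with respect to $\ell_F$.

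Next I would verify the three Delzant axioms for $F$ relative to $\ell_F$. Condition (D1) is immediate: since $\Delta$ is simple and $F$ is a face, the edges of $F$ meeting at any vertex $u$ of $F$ are exactly those edges of $\Delta$ at $u$ that lie in $F$, and for a $2$-face of a simple polytope there are precisely two of them, making $F$ a polygon. To establish (D2) and (D3) at an arbitrary vertex $u$ of $F$, I would induct on the number of edges of $F$ separating $u$ from $v$ along the boundary polygon of $F$. The base case $u=v$ holds by construction. For the inductive step, suppose $u$ is a vertex of $F$ at which the two edges of $F$ have weights $\tau_1,\tau_2\in\Z^n$ forming a $\Z$-basis of $\ell_F$, and let $u'$ be the neighbour of $u$ in $F$ along the edge $e$ with weight $\tau_1$. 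Applying Proposition \ref{int_dist} to the edge $e$ (with the roles of the weight bases at $u$ and $u'$), the weight at $u'$ along $e$ is $-\tau_1$, and among the remaining weights at $u'$ there is exactly one, call it $\tau_2'$, associated with the $2$-face $F$ itself, satisfying
\[
\tau_2'=\tau_2-a\,\tau_1 \qquad\text{for some }a\in\Z.
\]
Then $\{-\tau_1,\tau_2'\}=\{-\tau_1,\tau_2-a\tau_1\}$ is again a $\Z$-basis of $\ell_F$, which gives (D2) and (D3) at $u'$.

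The only mild subtlety, which is not really an obstacle, is making sure that when Proposition \ref{int_dist} is applied, the weight paired with $-\tau_1$ at $u'$ that lies in $\R\langle \tau_1,\tau_2\rangle$ is unambiguously the weight of the other edge of $F$ at $u'$; this is exactly the content of the proposition combined with the fact that $F$ is the $2$-face of $\Delta$ contained in $u+\R\langle\tau_1,\tau_2\rangle=u'+\R\langle-\tau_1,\tau_2'\rangle$. Since $F$ is a polygon, any vertex of $F$ can be reached from $v$ by a finite chain of such edge steps, so the induction covers all vertices and $F$ is Delzant with respect to $\ell_F$.
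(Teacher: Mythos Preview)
Your proof is correct, but it takes a different route from the paper's. The paper argues non-inductively: at each vertex $u\in F[0]$ it sets $\ell^u:=\Z\langle w_1^u,w_2^u\rangle$ and observes that, because $\{w_1^u,w_2^u\}$ extends to a $\Z$-basis of $\Z^n$ (by (D3)), one has $\ell^u=\R\langle w_1^u,w_2^u\rangle\cap\Z^n$; since the $\R$-span is the fixed $2$-plane parallel to $F$, this intersection does not depend on $u$, and the Delzant axioms for $F$ relative to $\ell_F:=\ell^u$ follow immediately. Your approach instead fixes one vertex, defines $\ell_F$ there, and then walks around the boundary polygon of $F$, using Proposition~\ref{int_dist} at each step to exhibit the unimodular change of basis $\{\tau_1,\tau_2\}\mapsto\{-\tau_1,\tau_2-a\tau_1\}$. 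The paper's saturation argument is shorter and generalizes verbatim to faces $F\in\Delta[k]$ of any dimension (as the paper in fact remarks right after the lemma); your inductive argument has the virtue of making the normal contributions $a_i^e$ appear explicitly, which dovetails nicely with the way they are used in the proof of Theorem~\ref{combinatorics 2}.
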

\begin{proof}
For each vertex $u$ in $F$, let $w_1^u$ and $w_2^u$ be the two weights at $u$ pointing along the edges of $F$ starting at $u$. 
For each such vertex, define the two-dimensional lattice
$\ell^u:=\Z\langle w_1^u,w_2^u\rangle$. We claim that $\ell^u$ is independent of $u\in F[0]$, the set of vertices of $F$, and call this lattice $\ell_F$.
This follows easily from the fact that, by  (D3) in Definition \ref{delzant def}, at each vertex $u$ the set $\{w_1^u,w_2^u\}$ extends to a $\Z$-basis of $\Z^n$,
and the linear span $\R\langle w_1^u,w_2^u\rangle$ does not change with $u\in F[0]$. 
Hence $F$ is a Delzant polytope in $\R\langle w_1^u,w_2^u\rangle$ w.r.t.\ the lattice $\ell_F$ (cf.\ Definition \ref{delzant def}, where one needs to replace $\R^n$ with $\R\langle w_1^u,w_2^u\rangle$
and the lattice $\Z^n$ with $\ell_F$). 
\end{proof}
Clearly the same proof can be adapted to prove that every face $F\in \Delta[k]$ is a Delzant polytope w.r.t. a  lattice $\ell_F \subseteq \Z^n$, for all $k=0,\ldots,n$.
\begin{proof}[Proof of Theorem \ref{combinatorics 2}]
Let $F\in \Delta[2]$. Define $\Phi\colon \R\langle w_1^u,w_2^u\rangle\to \R^2$ to be a linear map
that brings $\ell_F$ to $\Z^2$, and, more precisely, a $\Z$-basis of $\ell_F$ to the standard $\Z$-basis of $\Z^2$. 
Under this transformation, the `translated face' $-u+F\subset \R^n$, for $u\in F[0]$, is mapped to a Delzant polygon $\widetilde{F}$ in $\R^2$, endowed with lattice $\Z^2$.
Also, for each $u\in F[0]$ and $e\in F[1]$ contained in $\{u+tw_1^u,\,t\geq 0\}$, the 
normal contribution $a^e_F$ of $e$ in $F$ is the same as the normal contribution of the edge of $\widetilde{F}$ contained in $\{t\Phi(w_1^u),\,t\geq 0\}$.
Hence Proposition \ref{dim2} implies that
\begin{equation}\label{sum1}
\sum_{e\in F[1]} a^e_F=12-3|F[0]|\,.
\end{equation}
Moreover, it is easy to check that
\begin{equation}\label{aes}
\sum_{e\in \Delta[1]} \sum_{i=2}^n a_i^e = \sum_{F\in\Delta[2]}\sum_{e\in F[1]} a^e_F\,.
\end{equation}
From \eqref{sum1} we can conclude that
$$
\sum_{e\in \Delta[1]} \sum_{i=2}^n a_i^e = \sum_{F\in\Delta[2]}\sum_{e\in F[1]} a^e_F= 12 f_2-3{n \choose 2} f_0\,,
$$
where ${n \choose 2}$ is precisely the number of $2$-dimensional faces containing a given vertex (here we used that $\Delta$ is a simple polytope, see (D1) in Definition \ref{delzant def}). The conclusion follows from observing that $2f_1=nf_0$.
\end{proof}

\subsection{Reflexive polytopes} \label{rp}
First we recall the definition of a reflexive polytope, as it was first
introduced by Batyrev in \cite{Bat}.
\begin{defin}\label{reflexive}
Let $\Delta$ be an $n$-dimensional polytope. Then $\Delta$ is called {\bf reflexive} if it is an integral polytope in $\R^n$ containing $\mathbf{0}$ in its interior such that
\begin{equation}\label{refl eq}
\Delta=\bigcap_{i=1}^k\{x\in \R^n \mid \langle x , l_i \rangle \leq 1\}\,,
\end{equation}
where the $l_i\in \Z^n$ are the primitive outward normal vectors to the hyperplanes $H_i$ defining the facets, for $i=1,\ldots,k$.
\end{defin}
Reflexive polytopes have many properties. For instance, from \eqref{refl eq} it is easy to see that $\mathbf{0}$ must be the only interior integral point.
Another important property is related to the dual polytope $\Delta^*$, defined as 
\begin{equation}\label{dual}
\Delta^*=\{y\in \R^n \mid \langle x , y \rangle \geq -1 \;\;\;\mbox{for all}\;\;\; x\in \Delta\}\,.
\end{equation} 
Indeed, a lattice polytope $\Delta$ containing $\mathbf{0}$  is reflexive if and only if $\Delta^*$ is an integral polytope; more is true: $\Delta$ is reflexive if and only if $\Delta^*$ is reflexive (see \cite[Theorem 4.1.6]{Bat}). Moreover, since  $\mathbf{0}$ is in the interior of $\Delta$, we have $\Delta^{**}=\Delta$.
Finally, since every reflexive polytope contains only one interior integer point, a result of Lagarias and Ziegler \cite{LZ} implies that, up to lattice isomorphism,
there is only a finite number of reflexive polytopes  in each dimension.

There exists a duality between faces of $\Delta$ of codimension $k$ and faces of $\Delta^*$ of dimension $k-1$. Indeed every face $F$
of $\Delta$ of codimension $k$ can be written as $
F=\displaystyle\cap_{l=1}^k H_{j_l}\cap \Delta
$,
where $H_{j_l}=\{x\in \R^n \mid \langle x , l_{j_l} \rangle = 1\}$ is a  hyperplane supporting one of the facets of $\Delta$, for $l=1,\ldots,k$.  
Then $F^*$ is defined to be the convex hull of the points $-l_{j_1},\ldots,-l_{j_k}$ of $\Delta^*$.
For instance, for $n=2$ the dual of a vertex $v$ in $\Delta$ is an edge $e^*$ in $\Delta^*$, and for $n=3$ the dual of an edge $e$ in $\Delta$ is an edge $e^*$ in $\Delta^*$. 
\begin{defin}\label{int length} 
Let $e=(v_1,v_2)$ be a segment in $\R^n$ from $v_1$ to $v_2$ such that 
$$v_2-v_1= l(e) w$$ for some primitive $w\in \Z^n$ and $l(e)\in \R^+$. 
Then $l(e)$ is called the  {\bf relative length} of $e$.
\end{defin}
For lattice polytopes, the relative length is well-defined for each of their edges. In particular this holds for
reflexive polytopes.
In dimensions $2$ and $3$, the relative lengths of the edges of $\Delta$ and those of the dual are related by the
striking formulas of Theorem \ref{12 24 comb}. 

One of the main goals of this section is to provide an entirely combinatorial proof of Theorem \ref{main combinatorics}, which is a generalization of Theorem \ref{12 24 comb} to every dimension for \emph{Delzant} reflexive polytopes (see Definition \ref{delzant def}). Theorem \ref{main combinatorics} has two additional proofs: the first is the translation of the combinatorial proof to toric geometry (it was indeed the `toric geometry proof' that inspired
the combinatorial one); the second involves the Hirzebruch genus of a Hamiltonian space, and allows us to generalize Theorem \ref{main combinatorics} to the much broader category of objects called \emph{reflexive GKM graphs} (see Sect.~\ref{subsec:thmA} and \ref{mg}).

The next proposition is not new (see \cite[Prop.\ 1.8]{EP} and \cite[Sect.\ 3]{M}). However, since this note is aimed at readers coming from different backgrounds, we include a proof
for the sake of clarity and completeness. 

\begin{prop}\label{equivalent}
Let $\Delta\subset \R^n$ be an $n$-dimensional Delzant polytope with $\mathbf{0}$ in its interior. Then the following conditions are equivalent:
\begin{itemize}
\item[{\bf (i)}] $\Delta$ is a reflexive polytope;
\item[{\bf (ii)}] For every vertex $v\in \Delta$ we have 
\begin{equation}\label{vertex fano}
\sum_{j=1}^n w_j = -v
\end{equation}
where $w_1,\ldots,w_n$ are the weights at $v$.
\end{itemize}
\end{prop}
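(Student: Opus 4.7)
The plan is to work locally at each vertex and exploit the fact that in a Delzant polytope the weights at a vertex form a $\Z$-basis of $\Z^n$, which pairs nicely with the primitive outward normals of the facets meeting at that vertex. Fix a vertex $v$, let $w_1,\dots,w_n$ be its weights, and let $l_{i_1},\dots,l_{i_n}$ be the primitive outward normals to the $n$ facets $F_{i_1},\dots,F_{i_n}$ of $\Delta$ containing $v$, labelled so that $w_j$ is the edge direction opposite to $F_{i_j}$ (i.e.\ $w_j$ lies in $F_{i_k}$ for every $k\neq j$).

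The first step is to establish the dual basis relation $\langle w_j, l_{i_k}\rangle = -\delta_{jk}$. The off-diagonal entries $\langle w_j,l_{i_k}\rangle$ with $k\neq j$ vanish because the edge along $w_j$ lies in the facet $F_{i_k}$, on which $\langle\,\cdot\,,l_{i_k}\rangle$ is the constant $m_{i_k}$. For the diagonal entries, moving from $v$ a short positive distance along $w_j$ stays in $\Delta$ and in particular inside the halfspace $\langle x,l_{i_j}\rangle\le m_{i_j}=\langle v,l_{i_j}\rangle$, so $\langle w_j,l_{i_j}\rangle\le 0$. Since $\{w_1,\dots,w_n\}$ is a $\Z$-basis of $\Z^n$ by the Delzant condition and $\{l_{i_1},\dots,l_{i_n}\}$ is also a $\Z$-basis of $\Z^n$ (a standard consequence of smoothness, which I would spell out by taking dual bases), the matrix $(\langle w_j,l_{i_k}\rangle)_{j,k}$ is an integer matrix of determinant $\pm 1$; being upper/lower triangular with non-positive diagonal forces the diagonal entries to equal $-1$.

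Once the identity $\langle w_j, l_{i_k}\rangle = -\delta_{jk}$ is in place, I would expand $v$ in the basis $\{w_1,\dots,w_n\}$ using the facet equations $\langle v,l_{i_j}\rangle = m_{i_j}$, obtaining
\begin{equation*}
v \;=\; \sum_{j=1}^n \langle v,-l_{i_j}\rangle\, w_j \;=\; -\sum_{j=1}^n m_{i_j}\, w_j,
\end{equation*}
and rearrange to get $\sum_j w_j = -v$ if and only if $m_{i_j}=1$ for all $j=1,\dots,n$.

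The implications now follow immediately. For (i)$\Rightarrow$(ii), reflexivity means $m_i=1$ for every facet, hence in particular for the facets at any given vertex, and the displayed identity gives \eqref{vertex fano}. For (ii)$\Rightarrow$(i), the condition $\sum_j w_j = -v$ at every vertex forces $m_{i_j}=1$ for every facet through every vertex; since each facet of a polytope contains at least one vertex, all $m_i$ equal $1$ and $\Delta$ is reflexive. The only subtle point in the argument is the justification of the dual basis identity, namely combining the purely combinatorial/integral input (both sets are $\Z$-bases of $\Z^n$, so the pairing has determinant $\pm 1$) with the geometric input (the weights point into $\Delta$, so the diagonal pairings are non-positive); everything else is linear algebra.
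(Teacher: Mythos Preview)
Your proof is correct and follows essentially the same approach as the paper: both hinge on the dual-basis identity $\langle w_j, l_{i_k}\rangle = -\delta_{jk}$ at a smooth vertex, then use it to compare $-v$ with $\sum_j w_j$ via the facet constants $m_{i_j}$. The paper treats the two implications separately (writing $\mathbf{0}=v+\sum k_j w_j$ for (i)$\Rightarrow$(ii), and applying a lattice change of coordinates for (ii)$\Rightarrow$(i)), whereas you derive the single identity $v=-\sum_j m_{i_j} w_j$ and read off both directions at once; this is a minor streamlining rather than a different argument.
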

\begin{rmk}
Condition {\bf (ii)} above corresponds exactly to the \emph{vertex-Fano} condition defined by McDuff in \cite{M} for Delzant polytopes (see \cite[Def.\ 3.1]{M}): here such polytopes
are called \emph{monotone}. This condition is the key idea to generalize the concept of reflexive polytope to that of reflexive GKM graph (see Definition \ref{def monotone}).
\end{rmk}
\begin{proof}[Proof of Prop.\ \ref{equivalent}]
{\bf (i)}$\implies${\bf (ii)} Assume that $\Delta$ is a Delzant reflexive polytope containing the origin $\mathbf{0}$ in its interior. Let $v$ be a vertex of $\Delta$, and consider the hyperplanes 
$H_i=\{x\in \R^n \mid \langle x, l_i \rangle=1\}$ supporting the facets of $\Delta$ containing $v$, where $l_i$ is the outward primitive integral
vector orthogonal to $H_i$. 
Since, by assumption, $\Delta$ is Delzant, property (D3) implies that $\mathbf{0}$
can be written as $\mathbf{0}=v+k_1\,w_1+\cdots + k_n\,w_n$, for some unique $n$-tuple of integers $k_1,\ldots,k_n$.
Pick one of the hyperplanes $H_i$ containing $v$, and suppose that the vectors $w_1,\ldots,\widehat{w_i},\ldots, w_n$ are tangent to $H_i$.
Then 
$$
0=\langle v+k_1w_1+\cdots + k_nw_n,l_i\rangle = \langle v,l_i \rangle + k_i \langle w_i,l_i \rangle = 1+k_i \langle w_i,l_i \rangle\,.
$$
Observe that $k_i$ and $\langle w_i,l_i \rangle$ are both integers, and that their product is $-1$. From the choice of 
$l_i$ and $w_i$ it follows that $\langle w_i,l_i \rangle=-1$, implying $k_i=1$. Since the above argument holds for every $i=1,\ldots,n$
and for every vertex $v\in \Delta$, {\bf (ii)} follows. \\
{\bf (ii)}$\implies${\bf (i)} Let $\Delta$ be a Delzant polytope of dimension $n$ containing the origin in its interior, let $v$ be one of its vertices and $w_1,\ldots,w_n$ as in {\bf (ii)}. From 
the smoothness of $\Delta$ it follows that there exists a lattice transformation taking the vectors $w_1,\ldots,w_n$ to the standard 
vectors $e_1=(1,0,\ldots,0),\ldots, e_n=(0,\ldots,0,1)$, and so the hyperplanes containing $v$ become $H_i'=\{x_i=h_i\}$, for some
$h_i\in \Z$, for every $i=1,\ldots,n$. However, \eqref{vertex fano} forces all the $h_i$'s to be $-1$, implying that $\Delta$ is reflexive.
\end{proof}
We introduce the following definition.
\begin{defin}\label{cones}
Let $\Delta$ be a rational polytope, i.e. the edges meeting at each vertex $v$ are of the form $v+t w_i$ with $t\geq 0$ and $w_i\in \Z^n$, for all $i=1,\ldots,k(v)$ (here $k(v)$
denotes the number of edges incident to $v$). 
\begin{itemize}
\item[(i)] The {\bf cone at $v$} is defined to be $$\mathcal{C}_v:=\{\sum_{j=1}^{k(v)} t_j w_j\mid t_j \geq 0\}.$$
\item[(ii)] The {\bf tangent cone (or vertex cone) at $v$} is the affine cone $\mathcal{C}_v^{\text{aff}}$ given by $v+ \mathcal{C}_v$.
\end{itemize}
\end{defin}
The above concepts are defined for rational polytopes, but can be easily generalized to non-rational ones. 
\begin{rmk}\label{cones equiv}
It is clear that the collection of tangent cones at the vertices determines the polytope itself (indeed, the collection of vertices does), but the collection of cones in general does not.
However, Proposition \ref{equivalent} (ii) implies that, \emph{for Delzant reflexive polytopes knowing the cone at a vertex $v$ is equivalent to knowing its tangent cone}.
Hence \emph{every Delzant reflexive polytope is determined by the collection of cones at its vertices}.
\end{rmk}

The next proposition is the second key ingredient to prove Theorem \ref{main combinatorics}.
\begin{prop}\label{length_Delzant}
Let $\Delta$ be an $n$-dimensional Delzant reflexive polytope, and $l(e)$ the relative length of $e\in \Delta[1]$.
Then, for each $e\in \Delta[1]$, we have
\begin{equation}
 l(e) = 2 + \sum_{i=2}^n a_i^e,
\end{equation}
where the $a_i^e$'s are the normal contributions to the edge $e$, as in Proposition \ref{int_dist}.
\end{prop}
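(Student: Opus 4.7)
The plan is to exploit the reflexive (vertex-Fano) condition from Proposition \ref{equivalent} at both endpoints of the edge $e$, and then to subtract the two resulting relations. The key observation is that all weights at both endpoints can be compared directly via Proposition \ref{int_dist}, which lets me express the difference of the two weight sums in terms of the normal contributions $a_i^e$ and the direction $w_1$ along $e$.

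More concretely, let $e\in\Delta[1]$ join the vertices $u$ and $v$, and choose the weights $w_1,\ldots,w_n$ at $u$ and $\tilde w_1,\ldots,\tilde w_n$ at $v$ so that $w_1$ and $\tilde w_1$ point along $e$. By definition of the relative length, $v-u=l(e)\,w_1$, and moreover $\tilde w_1=-w_1$. Applying Proposition \ref{int_dist} and relabeling, I may assume that for each $i=2,\ldots,n$,
\begin{equation*}
w_i-\tilde w_i=a_i^e w_1.
\end{equation*}

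Now I would invoke the reflexivity of $\Delta$ in the form of Proposition \ref{equivalent}(ii) at both vertices, which gives $\sum_{j=1}^n w_j=-u$ and $\sum_{j=1}^n \tilde w_j=-v$. Subtracting,
\begin{equation*}
\sum_{j=1}^n w_j-\sum_{j=1}^n \tilde w_j = v-u = l(e)\,w_1.
\end{equation*}
On the other hand, using $w_1-\tilde w_1=2w_1$ together with the relations $w_i-\tilde w_i=a_i^e w_1$ from the previous step, the left-hand side equals $\bigl(2+\sum_{i=2}^n a_i^e\bigr)w_1$. Since $w_1$ is a nonzero primitive lattice vector, comparing coefficients yields
\begin{equation*}
l(e)=2+\sum_{i=2}^n a_i^e,
\end{equation*}
which is the claimed identity.

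The only delicate point is to make sure the indexing of the weights at $v$ is compatible with that at $u$, so that the subtraction really produces the $a_i^e$'s from Proposition \ref{int_dist} and nothing else. This is guaranteed by the bijection $i\mapsto j_i$ in that proposition, which I simply use to relabel $\tilde w_{j_i}$ as $\tilde w_i$; there is no further combinatorial obstacle, and the argument is essentially a two-line linear-algebra computation once the reflexivity identity at each vertex is in hand.
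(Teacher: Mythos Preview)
Your proof is correct and follows essentially the same approach as the paper's: both use the vertex-Fano condition from Proposition \ref{equivalent} at the two endpoints of $e$, subtract, and invoke Proposition \ref{int_dist} to rewrite the difference of weight sums as $(2+\sum_{i=2}^n a_i^e)w_1$. Your explicit remark about the relabeling $\tilde w_{j_i}\mapsto \tilde w_i$ is a nice clarification that the paper leaves implicit.
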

\begin{proof}
Let $e\in \Delta[1]$ with endpoints $u$ and $v$. 
Assume that  $\{w_1,\ldots,w_n\}$ and $\{\tilde{w}_1,\ldots,\tilde{w}_n\}$ are the weights at $u$ and $v$, respectively, and that $w_e:=w_1=-\tilde{w}_1$. 

Observe that $v-u=l(e) \cdot w_e$. On the other hand, since $\Delta$ is reflexive, Proposition \ref{equivalent} implies that
\[
v-u=\sum_{i=1}^n w_i -\sum_{i=1}^n \tilde{w}_i= \left( 2+\sum_{i=2}^n a_i^e\right)\cdot w_e.
\] 
\end{proof}
Proposition \ref{equivalent} is crucial in the proof of Proposition \ref{length_Delzant}: for Delzant reflexive polytopes it allows us to 
turn a `metric quantity' (the relative length $l(e)$) into an `intrinsic' property of the polytope, namely the sum of the normal contributions.

\begin{proof}[Proof of Theorem \ref{main combinatorics}]\emph{(Combinatorial)}\label{page:combinatorics}
By Proposition \ref{length_Delzant} and Theorem \ref{combinatorics 2} we have that
$$
\sum_{e\in \Delta[1]}l(e)=\sum_{e\in \Delta[1]}\Big( 2+ \sum_{i=2}^n a_i^e\Big)=12f_2+(5-3n)f_1\,.
$$
To pass from \eqref{formula f} to \eqref{def C(n,h)} it is sufficient to express the $f$-vector in terms of the $h$-vector by the formula
$
f_k=\sum_{l=k}^n {l \choose k}h_{n-l}
$, which holds for all $k=0,\dots,n$. The details are left to the reader. 
 \end{proof}

\begin{corollary}\label{cor combinatorics}
As special cases we have:
\begin{itemize}
\item[$\bullet$] If $\Delta$ is a Delzant reflexive polytope of dimension $2$ then
\begin{equation}\label{12}
\sum_{e\in \Delta[1]}l(e)+|\Delta[0]|= 12. 
\end{equation}
\item[$\bullet$] If $\Delta$ is a Delzant reflexive polytope of dimension $3$ then 
\begin{equation}\label{24}
\sum_{e\in \Delta[1]}l(e)= 24 \,.
\end{equation}
\end{itemize}
\end{corollary}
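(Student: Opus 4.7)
Both statements will follow as direct specializations of the identity
$\sum_{e\in \Delta[1]} l(e)=12f_2+(5-3n)f_1$ from Theorem \ref{main combinatorics},
combined with the elementary linear relations between the entries of the $f$-vector
forced by the assumption that $\Delta$ is simple (condition (D1)).

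For $n=2$ the plan is immediate. A $2$-dimensional polytope has $f_2=1$, and for any polygon the number of edges equals the number of vertices, so $f_1=f_0=|\Delta[0]|$. Plugging $n=2$ into \eqref{formula f} gives
\[
\sum_{e\in \Delta[1]}l(e)=12\cdot 1+(5-6)f_1=12-|\Delta[0]|,
\]
which rearranges to \eqref{12}. No additional ingredient is needed.

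For $n=3$ the plan is to extract the linear relation $3f_2-f_1=6$ from the Dehn--Sommerville equations, since this is exactly what is required to eliminate the dependence on $\mathbf{f}$. In dimension three, Dehn--Sommerville for a simple polytope gives $h_0=h_3$ and $h_1=h_2$; the first is Euler's formula $f_0-f_1+f_2=2$, while the second, after expanding the definition of the $h$-vector, yields $-3+f_2=3-2f_2+f_1$, i.e.\ $3f_2-f_1=6$. (Alternatively, one can derive the same identity directly from Euler's formula together with the double-counting identity $2f_1=3f_0$, which holds because $\Delta$ is simple and $3$-dimensional.) Substituting $n=3$ into \eqref{formula f} then gives
\[
\sum_{e\in \Delta[1]}l(e)=12f_2+(5-9)f_1=4(3f_2-f_1)=24,
\]
proving \eqref{24}. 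The only place where anything non-trivial enters is the passage from \eqref{formula f} to the numerical constants $12$ and $24$; this passage is purely a bookkeeping exercise with the $f$-vector once Theorem \ref{main combinatorics} is in hand, so I expect no obstacle beyond correctly invoking the simple-polytope relations.
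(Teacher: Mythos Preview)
Your proposal is correct and matches the paper's approach: the paper presents Corollary \ref{cor combinatorics} as an immediate specialization of Theorem \ref{main combinatorics} without spelling out the $f$-vector bookkeeping, and your argument fills in exactly those details. For $n=3$ the paper also offers, in Remark \ref{equiv 3}(1), the equivalent reformulation $\sum_{e}l(e)=\frac{24f_3}{n-2}+(3-n)f_1$, which yields $24$ instantly upon setting $n=3$ and $f_3=1$; this is just a repackaging of the same Dehn--Sommerville computation you carried out.
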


\begin{figure}[h]
    \centering
    \includegraphics[width=10cm]{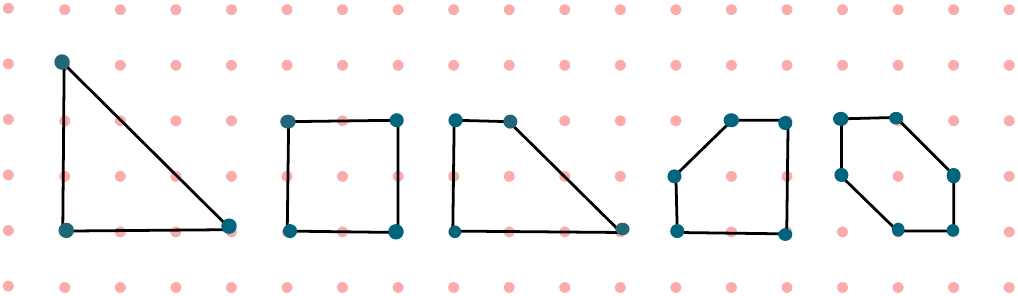}
    \caption{Delzant reflexive polygons}
    \label{fig:smooth}
\end{figure}

\begin{rmk}\label{rmk comb}
Equations \eqref{12} and \eqref{24} are special cases of \eqref{12 general} and \eqref{24 general}, since the smoothness of $\Delta$
implies that $l(f)=1$ for all $f\in E^*$. Indeed, for $n=2$ it is easy to see that, if $v$
is a vertex of $\Delta$ and $w_1,w_2$ are the weights at $v$, then $|\det(w_1,w_2)|=l(e^*)$, where
$e^*$ is the edge in $\Delta^*$ dual to $v$. Hence, for Delzant polytopes of dimension $2$, we have $\sum_{f\in E^*}l(f)=|V|$.
Analogously, for $n=3$ we have $l(f)=1$ for all $f\in E^*$. Indeed, let $H_1$ and $H_2$ be the two hyperplanes supporting the facets $F_1$ and $F_2$ of $\Delta$ intersecting in the edge $e$, and let $l_1$ and $l_2$ be the two primitive outward normal vectors to $F_1$ and $F_2$. From the smoothness of $\Delta$, there exists a $GL(3,\Z)$ transformation that sends a neighborhood of $e$ in  $\Delta$ into a cone in $\R^3$ with apex generated by the vector $(0,0,1)$, with $l_1$ and $l_2$ becoming the vectors $(-1,0,0)$ and $(0,-1,0)$. Then the edge dual to $e$ becomes the segment connecting $(1,0,0)$ to $(0,1,0)$, which has relative length $1$. Since relative lengths are invariant under $GL(3,\Z)$ transformations, it follows that $l(f)=1$ for all $f\in E^*$.
\end{rmk}

\section{Theorem \ref{main combinatorics}: the (symplectic) toric proof}\label{sts}

\subsection{Preliminaries: Hamiltonian $\T$-spaces}\label{hts}
Let $\T$ be a compact real torus of dimension $d$ with integral lattice  $\ell\subset Lie(\T)$ acting effectively  on a compact symplectic manifold $M$ with a  {\bf discrete fixed point set} $M^\T$. Assume that the action is Hamiltonian, i.e.  that there exists a smooth $\T$-invariant map $\psi\colon M\to Lie(\T)^*$ such that
\begin{equation}\label{mm}
d\langle \psi(\cdot), \xi \rangle=-\iota_{\xi^\#}\omega,
\end{equation} 
where $\xi^\#$ is the vector field associated to $\xi\in Lie (\T)$. (Here $\langle \cdot, \cdot \rangle$ denotes the pairing between $Lie(\T)^*$ and $Lie(\T)$.)  
We call a triple $(M,\omega,\psi)$ with these properties  a \textbf{Hamiltonian $\T$-space}\label{hamtspace}.
Let $J\colon TM \to TM$ be an almost complex structure which is compatible with $\omega$. Since the set of such structures is
contractible, we can define complex invariants of the tangent bundle.
At each fixed point $p\in M^\T$ we can define a multiset of elements $w_1,\ldots,w_n \in \ell^*\subset Lie(\T)^*$, called \textbf{weights} of the $\T$-action at $p$, which determine
the action of $\T$ on a neighborhood of $p$. Namely, there exist coordinates $z_1,\ldots,z_n$ around $p=(0,\ldots,0)$ where
the $\T$-action can be written as
\begin{equation}\label{weightsII}
\exp(\xi) \cdot (z_1,\ldots,z_n)=(e^{2\pi i w_1(\xi)}z_1,\ldots,e^{2\pi i w_n(\xi)}z_n),\quad\mbox{for all}\quad \xi\in Lie(\T)\,.
\end{equation}
Note that, since the action is required to have isolated fixed points, none of the above weights can be zero.

Moreover, we can define Chern classes.
Let 
$$c=\sum_{j=0}^n c_j\in H^{2*}(M;\Z)$$ 
be the total Chern class of the tangent bundle $(TM,J)$.  The total equivariant Chern class 
$$c^{\T}=\sum_{j=0}^n c_j^{\T}\in H^{2*}_{\T}(M;\Z)$$ of  $(TM,J)$ is defined to be
the total (ordinary) Chern class of the bundle 
$$TM\times_{\T}E\T\to M\times_{\T}E\T,$$ 
where $E\T\to B\T\simeq (\C P^\infty\times \dots \times \C P^{\infty})^{\dim(\T)}$ is the classifying bundle for $\T$. 
From naturality of equivariant Chern classes, it follows that 
$$c^{\T}(p)=\prod_{j=1}^n (1+w_j)\in H^{2*}_{\T}(\text{pt};\Z),$$ where $w_1,\ldots,w_n$ are the weights of the
$\T$-action at $p$.
In particular, 
$$c_j^{\T}(p)=\sigma_j(w_1,\ldots,w_n),$$ 
where $\sigma_j(x_1,\ldots,x_n)$ denotes the elementary symmetric polynomial of degree $j$ in $x_1,\ldots,x_n$.
Moreover, the restriction map 
$$r\colon H^*_{\T}(M;\Z)\to H^*(M;\Z),$$ 
induced by the trivial homomorphism $\{1\}\to \T$, maps $c^{\T}$ to $c$.

\subsection{Symplectic toric manifolds}\label{stm}

\begin{defin}\label{toric}
A {\bf symplectic toric manifold} is a Hamiltonian $\T$-space $(M,\omega,\psi)$, where the dimension of $\T$ is half the dimension of the manifold $M$.
\end{defin}

For any Hamiltonian $\T$-space $(M,\omega,\psi)$, the Atiyah  \cite{At1} and Guillemin-Sternberg \cite{GS82}  Convexity Theorem asserts that the image of the moment map $\psi(M)\subset Lie(\T)^*$ is a \emph{convex polytope} $\Delta$. 
If, in addition,  $(M,\omega,\psi)$ is a symplectic toric manifold, then the moment polytope $\Delta$ is Delzant (see Definition \ref{delzant def}).

By the Delzant Theorem \cite{De},  a symplectic toric manifold $(M,\omega,\psi)$ is completely determined (up to equivariant symplectomorphisms) by the  moment polytope $\Delta$. Moreover,
to each Delzant polytope $\Delta$ one can associate a symplectic toric manifold $(M_\Delta,\omega,\psi)$ such that $\psi(M_\Delta)=\Delta$. 

Choosing a splitting of the torus $\T=S^1\times \cdots \times S^1$, one can identify $Lie(\T)^*$ with $\R^n$, and $\ell^*$ with $\Z^n$, regarding $\Delta$ as a polytope in $\R^n$.
Symplectic toric manifolds satisfy the following well-known properties.
\begin{lemma}\label{properties toric 0}
 Let $(M,\omega,\psi)$ be a symplectic toric manifold and $\Delta$ its moment polytope. Then,
\begin{enumerate}
\item the moment map $\psi$ defines a bijection between the fixed point set $M^\T$ and the vertices of $\Delta$;
\item for each vertex $v$ of $\Delta$, the weights at $v$ from Definition~\ref{delzant def} are precisely the  weights of the $\T$-action at $p:=\psi^{-1}(v)$ defined in \eqref{weightsII};
\item every edge $e$ of $\Delta$ with direction vector $w\in \ell^*$ is the image of  a smoothly embedded, symplectic, $\T$-invariant $2$-sphere  $ S^2_e:=\psi^{-1}(e)\subset M$, with stabilizer $K_{e}:=\exp(\ker(w))\subset \T$, a codimension-$1$ subtorus of $\T$.
\end{enumerate}
\end{lemma}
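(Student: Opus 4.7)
The plan is to deduce all three claims from two standard tools: the equivariant Darboux/local normal form theorem around fixed points of a Hamiltonian torus action, and the convexity plus connectedness halves of the Atiyah--Guillemin--Sternberg theorem. Around any $p\in M^{\T}$, the local normal form produces holomorphic coordinates $(z_1,\ldots,z_n)$ in which the $\T$-action takes the diagonal form~\eqref{weightsII} with weights $w_1(p),\ldots,w_n(p)$, and in which the moment map reads, up to the sign fixed by~\eqref{mm},
\[
 \psi(z) \;=\; \psi(p) \,+\, \sum_{j=1}^{n} |z_j|^2\, w_j(p).
\]
Since $\Delta$ is Delzant and $\dim\T=\tfrac12\dim M$, the weights at $p$ necessarily form a $\Z$-basis of $\ell^*$; otherwise the local picture would contradict either effectiveness of the action or the Delzant condition at the vertex $\psi(p)$.

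Next I would settle (1) and (2) together. The local formula shows that $\psi$ sends a neighbourhood of $p$ onto $\psi(p)+C_p$, where $C_p$ is the rational simplicial cone spanned by the weights at $p$; hence $\psi(p)$ is a vertex of $\Delta$ whose outgoing edge directions are exactly $w_1(p),\ldots,w_n(p)$. This yields (2) and the inclusion $\psi(M^{\T})\subseteq\Delta[0]$. For the reverse direction, given $v\in\Delta[0]$, I would choose $\xi\in Lie(\T)$ rational with dense one-parameter subgroup and with $\langle\cdot,\xi\rangle$ attaining its strict minimum on $\Delta$ at $v$. Then $\langle\psi,\xi\rangle\colon M\to\R$ is a Morse--Bott function whose minimum locus is $\psi^{-1}(v)$; by the connectedness theorem this preimage is connected, and every point in it is critical for $\langle\psi,\xi\rangle$, hence fixed by the closure $\T$ of $\exp(\R\xi)$. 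So $\psi^{-1}(v)$ is a single $\T$-fixed point.

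For (3), fix an edge $e\in\Delta[1]$ with primitive direction $w$, an endpoint $v$, and $p=\psi^{-1}(v)$, relabelling the weights so that $w_1(p)=w$. The local model identifies $M^{K_e}$ near $p$ with the coordinate disc $\{z_2=\cdots=z_n=0\}$, which $\psi$ maps onto an initial segment of $e$. Let $N$ be the connected component of $M^{K_e}$ containing this disc. By Frankel's theorem $N$ is a compact $\T$-invariant symplectic submanifold, and the local model pins its real dimension at $2$. The residual circle $\T/K_e$ acts on $N$ in a Hamiltonian fashion with moment map $\psi|_N$ taking values on the affine line $\psi(p)+\R w$, and its fixed points are $\T$-fixed; by (1) they must therefore be the two endpoints of $e$. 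Consequently $N$ is diffeomorphic to $S^2$, $\psi(N)=e$, and the stabiliser of $N$ is exactly $K_e$.

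The delicate step is the final identification $\psi^{-1}(e)=N$, as opposed to $N$ being only one component of the preimage. I would close this as follows: take $x\in\psi^{-1}(e^\circ)$, note that its $\T$-orbit has dimension $1$ because fibres of $\psi$ over a point in the relative interior of $e$ are single $\T$-orbits (the fibre dimension equals the codimension in $\Delta$ of the open stratum containing $\psi(x)$, namely $n-1$), and conclude that the connected isotropy at $x$ is $K_e$, so $x\in M^{K_e}$; the connectedness of fibres of $\psi$ then forces $x\in N$. Setting $S^2_e:=N$ completes the proof.
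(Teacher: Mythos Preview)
The paper does not prove this lemma at all: it is introduced with the phrase ``Symplectic toric manifolds satisfy the following well-known properties'' and then used without argument. So there is no proof in the paper to compare against; your write-up supplies what the authors deliberately omitted.

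Your overall strategy---local normal form for (1) and (2), then isotropy submanifolds plus the residual $\T/K_e$-action for (3)---is the standard one and is sound. Two small slips are worth fixing. First, in the proof of (1) you ask for $\xi\in Lie(\T)$ ``rational with dense one-parameter subgroup''; these are mutually exclusive. You want $\xi$ \emph{generic} (irrational) so that $\overline{\exp(\R\xi)}=\T$, which is exactly what you use two lines later. Second, in the last paragraph your parenthetical says ``the fibre dimension equals the codimension in $\Delta$ of the open stratum containing $\psi(x)$, namely $n-1$,'' but for symplectic toric manifolds the fibre over the relative interior of a $k$-face is a $k$-torus, so the fibre (and hence the $\T$-orbit) over $e^\circ$ has dimension $1$, not $n-1$. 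Your main clause ``its $\T$-orbit has dimension $1$'' is correct, and from it the isotropy has dimension $n-1$; since locally $\psi$ near $x$ takes values in the line $\psi(p)+\R w$, one gets $(\mathrm{Lie}\,\mathrm{Stab}(x))^\perp=\R w$, hence the identity component of the stabiliser is $K_e$. With these two corrections your argument goes through.
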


It follows that 
$$\mathcal{S}:=\bigcup_{e\in \Delta[1]} S^2_e$$ 
is a union
of smoothly embedded, symplectic $\T$-invariant $2$-spheres. Each  of these spheres is endowed with a Hamiltonian action of the quotient circle $\T/K_{e}$ and so it is also a symplectic toric manifold. We call $\mathcal{S}$  (the
union of these spheres)  the {\bf toric 1-skeleton} \emph{of the symplectic toric manifold} $(M,\omega,\psi)$. 

The following result is well-known, and its proof can be found, for example, in \cite[Result 57]{Oda}. This is indeed a restatement, using the language of toric manifolds, of Proposition
\ref{dim2}.
\begin{prop}\label{oda}
Let $(M,\omega, \psi)$ be a $4$-dimensional symplectic toric manifold and $\Delta$ its moment polytope. Then the sum of the intersection numbers of the spheres in its toric $1$-skeleton is equal to 
$$12-3\lvert \Delta[0] \rvert.$$
\end{prop}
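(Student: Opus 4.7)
The plan is to recognize Proposition \ref{oda} as the toric-geometric translation of the combinatorial Proposition \ref{dim2}. Concretely, I would show that for each edge $e\in\Delta[1]$ the self-intersection number $[S^2_e]\cdot[S^2_e]$ of the corresponding sphere inside $M$ equals the normal contribution $a^e$ defined in Proposition \ref{int_dist}. Once this identification is established, summing over all edges and invoking Proposition \ref{dim2} immediately gives
$$\sum_{e\in\Delta[1]} [S^2_e]^2 \;=\; \sum_{e\in\Delta[1]} a^e \;=\; 12 - 3|\Delta[0]|.$$

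To prove that $[S^2_e]^2 = a^e$ I would use equivariant localization. By Lemma \ref{properties toric 0}, the sphere $S^2_e$ is $\T$-invariant with exactly two fixed points $p_u=\psi^{-1}(u)$ and $p_v=\psi^{-1}(v)$ at the endpoints of $e$. At $p_u$ the tangential and normal weights of the $\T$-action on $M$ are $w_1$ and $w_2$ respectively, while at $p_v$ they are $\tilde{w}_1 = -w_1$ and $\tilde{w}_j$, where the indexing is chosen so that $w_2$ and $\tilde{w}_j$ span the unique $2$-dimensional face of $\Delta$ containing $e$, exactly as in Proposition \ref{int_dist}.

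Now $[S^2_e]^2 = \int_{S^2_e} c_1(N_e)$, where $N_e$ denotes the normal bundle of $S^2_e$ in $M$. Applying the Atiyah--Bott--Berline--Vergne localization formula to the equivariant first Chern class $c_1^\T(N_e)$, whose restrictions to $p_u$ and $p_v$ are $w_2$ and $\tilde{w}_j$ respectively, one obtains
$$[S^2_e]^2 \;=\; \int_{S^2_e} c_1(N_e) \;=\; \frac{w_2}{w_1} + \frac{\tilde{w}_j}{-w_1} \;=\; \frac{w_2 - \tilde{w}_j}{w_1} \;=\; a^e,$$
where the last equality is exactly the relation defining $a^e$ in Proposition \ref{int_dist}. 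Combined with the first paragraph this finishes the proof.

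The main (minor) obstacle is really bookkeeping: one must be careful to match the combinatorial labelling of Proposition \ref{int_dist}, namely the particular $\tilde{w}_j$ that is paired with $w_2$, to the geometric labelling of the normal weight actually appearing at $p_v$. Both are pinned down by the unique $2$-dimensional face of $\Delta$ through $e$, so the match is automatic, but the bookkeeping must be stated carefully: without it the localization sum would be a rational function in $w_1$ rather than the honest integer $a^e$.
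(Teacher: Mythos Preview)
Your proposal is correct and is exactly the approach the paper has in mind: the paper does not spell out a proof of Proposition~\ref{oda} but cites \cite[Result 57]{Oda} and remarks that it ``is indeed a restatement, using the language of toric manifolds, of Proposition~\ref{dim2}''; your argument makes that restatement explicit via ABBV, which is precisely the identification $a^e_i=\int_{S^2_e}c_1(\nu^e_j)$ the paper records in Remark~\ref{analogy}(1).
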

We are now ready to prove Theorem \ref{A} in the symplectic toric case.
\begin{thm}\label{formula c1}
Let $(M,\omega,\psi)$ be a symplectic toric manifold with toric 1-skeleton $\mathcal{S}$. Then 
\begin{equation}\label{eq: formula c1}
\sum_{e\in \Delta[1]} c_1[S_e^2]=12f_2+(5-3n)f_1\,,
\end{equation}
where $\mathbf{f}=(f_0,\ldots,f_n)$ is the $f$-vector of the moment polytope $\Delta=\psi(M)$.
\end{thm}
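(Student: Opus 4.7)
The plan is to derive a local identity expressing $c_1[S_e^2]$ in terms of the normal contributions $a_i^e$ from Proposition~\ref{int_dist}, and then sum over all edges and invoke the purely combinatorial Theorem~\ref{combinatorics 2}. Concretely, I will establish the edge-wise identity
\begin{equation*}
c_1[S_e^2] = 2 + \sum_{i=2}^n a_i^e \qquad \text{for every } e \in \Delta[1].
\end{equation*}
Given this, summing yields
\begin{equation*}
\sum_{e \in \Delta[1]} c_1[S_e^2] = 2 f_1 + \sum_{e \in \Delta[1]} \sum_{i=2}^n a_i^e = 2 f_1 + \bigl(12 f_2 - 3(n-1) f_1\bigr) = 12 f_2 + (5 - 3n) f_1,
\end{equation*}
where the middle equality is exactly Theorem~\ref{combinatorics 2}.

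To prove the local identity, I fix $e \in \Delta[1]$ with endpoints $u, v$ and let $p_u, p_v \in M^\T$ be the corresponding fixed points (Lemma~\ref{properties toric 0}(1)). Choose weights $\{w_1,\ldots,w_n\}$ at $u$ and $\{\tilde w_1,\ldots,\tilde w_n\}$ at $v$ so that $w_1 = w_e$ points along $e$ and $\tilde w_1 = -w_e$, as in Proposition~\ref{int_dist}. By Lemma~\ref{properties toric 0}(2) these are also the weights of the $\T$-representations on $T_{p_u}M$ and $T_{p_v}M$. Since $S_e^2$ is a smooth $\T$-invariant $2$-sphere containing $p_u, p_v$ as its only fixed points, and the weight of $T_{p_u}S_e^2 \subset T_{p_u}M$ is $w_e$ (respectively $-w_e$ at $p_v$), the Atiyah--Bott--Berline--Vergne localization formula applied to the equivariant pushforward of $c_1^\T(TM)|_{S_e^2}$ gives
\begin{equation*}
c_1[S_e^2] \,=\, \frac{c_1^\T(TM)|_{p_u}}{w_e} + \frac{c_1^\T(TM)|_{p_v}}{-w_e} \,=\, \frac{1}{w_e} \sum_{i=1}^n \bigl(w_i - \tilde w_i\bigr),
\end{equation*}
where I used $c_1^\T(TM)|_p = \sum_i w_i(p)$. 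Proposition~\ref{int_dist} supplies a permutation $i \mapsto j_i$ of $\{2,\ldots,n\}$ with $w_i - \tilde w_{j_i} = a_i^e w_e$; combined with $w_1 - \tilde w_1 = 2 w_e$, this gives $\sum_i(w_i - \tilde w_i) = \bigl(2 + \sum_{i=2}^n a_i^e\bigr) w_e$, and dividing by $w_e$ yields the local identity.

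The main obstacle is the localization step, specifically verifying that applying ABBV to the restriction of an equivariant class to $S_e^2$ and pushing forward to a point produces exactly the ordinary Chern number $c_1[S_e^2]$. This rests on the naturality of the pushforward under the forgetful map $H^*_\T \to H^*$ (which sends $c^\T$ to $c$) and on the degree count that places the result in $H^0_\T(\text{pt}) = \mathbb{Q}$, so the a priori rational function $\sum_i(w_i - \tilde w_i)/w_e$ must simplify to a scalar; Proposition~\ref{int_dist} is precisely what makes the numerator a scalar multiple of the denominator, resolving the apparent division.
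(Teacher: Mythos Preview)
Your argument is correct. The edge-wise identity $c_1[S_e^2] = 2 + \sum_{i=2}^n a_i^e$ follows from ABBV exactly as you outline; the map $i\mapsto j_i$ in Proposition~\ref{int_dist} is indeed a bijection on $\{2,\ldots,n\}$ (each $2$-face through $e$ singles out one weight at $u$ and one at $v$), so $\sum_i(w_i-\tilde w_i)$ collapses to the stated multiple of $w_e$, and the degree count forces the localization output to be the ordinary integer $c_1[S_e^2]$.

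The paper takes a different, more geometric route: rather than localizing $c_1^\T$ directly, it splits $\iota_{S_e^2}^*c_1 = c_1(TS_e^2)+c_1(\nu_{S_e^2})$, notes that the tangent piece contributes $\chi(S_e^2)=2$, and then decomposes the normal bundle $\T$-equivariantly into line bundles $\nu_F^e$ indexed by the $2$-faces $F\supset e$. Regrouping the double sum as $\sum_{F\in\Delta[2]}\sum_{e\in F[1]}\int_{S_e^2}c_1(\nu_F^e)$, each inner sum is the sum of self-intersections in the $4$-dimensional toric submanifold $M_F=\psi^{-1}(F)$, which Proposition~\ref{oda} evaluates as $12-3|F[0]|$; summing over $F$ gives the result. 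Your approach bypasses the geometric identification of the normal line bundles with normal bundles inside $4$-dimensional submanifolds and goes straight to the combinatorics via Theorem~\ref{combinatorics 2}; the paper's approach keeps the argument self-contained within symplectic geometry, using Proposition~\ref{oda} instead of its combinatorial twin Proposition~\ref{dim2}. As the paper itself notes in Remark~\ref{analogy}, the two computations are translations of one another, with $a_i^e = \int_{S_e^2}c_1(\nu_j^e)$ providing the dictionary.
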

\begin{proof}
Denote by $\iota_{S^2_e}\colon S^2_e\hookrightarrow M$ the inclusion map. Observe that 
$$
\iota_{S^2_e}^* c_1 = c_1(T S^2_e) + c_1(\nu_{S^2_e}),
$$
where $\nu_{S^2_e}$ is the normal bundle of $S^2_e$ inside $M$, which   splits $\T$-equivariantly as a sum of $n-1$ line bundles $\nu^e_j$. Hence,
\begin{align*}
\sum_{S^2_e \in \mathcal{S}} c_1[S^2_e] & =  \sum_{S^2_e \in \mathcal{S}}  \int_{S^2_e} c_1(T S^2_e) +  c_1(\nu_{S^2_e}) = 2 f_1+  \sum_{S^2_e \in \mathcal{S}}   \int_{S^2_e} c_1(\nu_{S^2_e}) \\ & =  2 f_1 +  \sum_{S^2_e \in \mathcal{S}}  \sum_{i=1}^{n-1}  \int_{S^2_e} c_1(\nu^e_j), 
\end{align*}
where we used the fact that 
$$
\int_{S^2_e}c_1(T S^2_e) = \chi(S^2_e) = 2.
$$
For every $F\in \Delta[2]$, the preimage $\psi^{-1}(F)$ is a $4$-dimensional toric submanifold $M_{F}$ of $M$ for an appropriate subtorus of $\T$ (this is the symplectic toric counterpart of Lemma~\ref{2face}). Let $\mathcal{S}_F$ be the corresponding toric $1$-skeleton, which, of course, is a subset of $\mathcal{S}$. Moreover, since the splitting of each normal bundle $\nu_{S^2_e}$ is $\T$-invariant, each bundle $\nu^e_j$ can be identified with the normal bundle of $S^2_e$ inside a suitable $4$-dimensional submanifold $M_F$, corresponding to a $2$-face  $F\in \Delta[2]$ that has $e$ as an edge, and so will be denoted by
$$
\nu^e_j= \nu^e_F.
$$
We then have
\begin{equation}\label{an0}
\sum_{S^2_e \in \mathcal{S}}  \,\, \sum_{i=1}^{n-1}  \int_{S^2_e}  c_1(\nu^e_i) = \sum_{S^2_e \in \mathcal{S}}  \sum_{\substack{F\in \Delta[2]  \\ \text{s.t.} \, e\in F[1]}}   \int_{S^2_e}  c_1(\nu^e_F) =     \sum_{F\in \Delta[2]} \,\, \sum_{S^2_e \in \mathcal{S}_F}  \int_{S^2_e}  c_1(\nu^e_F), 
\end{equation}
where $F[1]$ is the set of edges of $F$.
On the other hand, 
$$
\sum_{S^2_e \in \mathcal{S}_F}   \int_{S^2_e}  c_1(\nu^e_F) 
$$
is the sum of the intersection numbers $S^2_e\cdot S^2_e$ for all the spheres in the toric $1$-skeleton  of the $4$-dimensional toric manifold $M_F$. So, by Proposition~\ref{oda}, it is equal to $12-3\lvert F[0]\rvert$, where $\lvert F[0] \rvert$ is the number of vertices of $F$. Consequently, 
\begin{equation}\label{an1}
 \sum_{F\in \Delta[2]} \,\, \sum_{S^2_e \in \mathcal{S}_F}   \int_{S^2_e}  c_1(\nu^e_F) =  \sum_{F\in \Delta[2]} \left( 12-3\lvert F[0]\rvert \right) = 12 f_2 - 3{n \choose 2} f_0=
 12f_2-3(n-1)f_1
\end{equation}
where we used the fact that, since $\Delta$ is simple, each vertex is in exactly $n \choose 2$ faces of dimension $2$, and
$2f_1=nf_0$.
\end{proof}
\begin{rmk}\label{analogy}
$\;$\vspace{0.1cm}
\begin{enumerate}
\item Equations \eqref{an0} and \eqref{an1} give that
\begin{equation}\label{an2}
\sum_{S^2_e \in \mathcal{S}}  \,\, \sum_{i=1}^{n-1}  \int_{S^2_e}  c_1(\nu^e_i) = 12 f_2 -3(n-1)f_1\,.
\end{equation}
This is the exact translation, into toric geometry terms, of Theorem \ref{combinatorics 2}. Indeed, one can prove that, for each $e\in E$ and each $a_i^e$ with $i=2,\ldots,n$, there exists 
a line bundle $\nu_e^j$ such that $a_i^e=\int_{S^2_e}  c_1(\nu^e_j)$, for some $j=1,\ldots,n-1$, where the $a_i^e$'s are the integers defined in Proposition \ref{int_dist}.

\item Note that, if we express \eqref{eq: formula c1} in terms of the $h$-vector of $\Delta=\psi(M)$,
we obtain exactly $C(n,\mathbf{h})$ (see \eqref{def C(n,h)}). 
\end{enumerate}
\end{rmk}

We recall an alternative characterization of the $h$-vector of a Delzant polytope, which is used in Section \ref{mg} to define
the $h$-vector in a more general context. Let $\Delta$ be an $n$-dimensional Delzant polytope, and
 $\xi\in \R^n$ a generic vector in $\R^n$, namely $\langle w,\xi \rangle \neq 0$ for every 
vector $w$ tangent to the edges of $\Delta$. Direct the edges of $\Delta$ using $\xi$, i.e.\ the edge $e$ with endpoints $v_1$ and $v_2$ is directed from $v_1$ to $v_2$
if $\langle v_2-v_1, \xi \rangle >0$. For every generic vector $\xi\in \R^n$, define the $h^{\xi}$-vector to be $\mathbf{h}^\xi=(h_0^\xi,\ldots,h_n^\xi)$, where 
\begin{equation}
h_j^\xi:=\{\# \mbox{ of vertices with }j \mbox{ entering edges}\}\quad\mbox{for all}\quad j=0,\ldots,n.
\end{equation}
\begin{lemma}\label{hequiv}
Let $\Delta$ be an $n$-dimensional Delzant polytope. Then the following three vectors associated to $\Delta$ are the same:
\begin{itemize}
\item[(1)] $\mathbf{h}=(h_0,\ldots,h_n)$
\item[(2)] $\mathbf{h}^\xi=(h_0^\xi,\ldots,h_n^\xi)$ for a generic $\xi\in \R^n$
\item[(3)] $\mathbf{b}=(b_0,b_2,\ldots,b_{2n})$, the vector of even Betti numbers of the associated (symplectic toric) manifold $M_\Delta$.
\end{itemize} 
In particular $\mathbf{h}^{\xi}$ is independent of the generic $\xi$ chosen.
\end{lemma}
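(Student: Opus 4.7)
The plan is to establish two equivalences, $(2)=(3)$ and $(2)=(1)$, which together give the lemma and also show that $\mathbf{h}^\xi$ is independent of the generic $\xi$ (since (3) visibly is). The Morse-theoretic step is the substantive one; the combinatorial step is a standard binomial inversion.

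For $(2)=(3)$, I consider the smooth function $\psi^\xi := \langle \psi, \xi \rangle : M_\Delta \to \R$. Because $\xi$ is generic in the sense required, the critical set of $\psi^\xi$ coincides with the fixed point set $M^\T$, which by Lemma \ref{properties toric 0}(1) is in bijection with $\Delta[0]$. At a fixed point $p = \psi^{-1}(v)$ with weights $w_1, \ldots, w_n$, the standard local normal form for Hamiltonian torus actions shows that $\psi^\xi$ is Morse at $p$ with index $2 \cdot \#\{i : \langle w_i, \xi\rangle < 0\}$. By Lemma \ref{properties toric 0}(2), these weights are precisely the edge-direction vectors at $v$, so $\langle w_i, \xi\rangle < 0$ is equivalent to saying that the corresponding edge is directed \emph{into} $v$ under the $\xi$-orientation. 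Hence the number of critical points of index $2j$ equals $h_j^\xi$. Since $\psi^\xi$ has critical points only in even indices, the Morse inequalities collapse to equalities (this is the classical perfectness argument of Atiyah and Guillemin--Sternberg for moment-map components), yielding $b_{2j}(M_\Delta) = h_j^\xi$ for every $j$. This proves $(2)=(3)$ and, as a byproduct, the independence of $\mathbf{h}^\xi$ from the generic $\xi$ chosen.

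For $(2)=(1)$, the idea is to count incidences between faces and their $\xi$-maximal vertices. For each $F \in \Delta[k]$, genericity of $\xi$ produces a unique top vertex $v_F \in F[0]$ maximizing $\langle\,\cdot\,,\xi\rangle$ on $F$; since $F$ is itself a simple $k$-polytope by Lemma \ref{2face}, exactly $k$ edges of $F$ meet $v_F$, and all of them are entering at $v_F$. Conversely, given a vertex $v$ of $\Delta$ with $j$ entering edges, any $k$-element subset of these edges spans (together with $v$) a unique $k$-face of $\Delta$ having $v$ as its top vertex, using that $\Delta$ is simple. Summing over all vertices gives the identity
$$f_k = \sum_{j=k}^n \binom{j}{k}\, h_j^\xi \qquad (0 \le k \le n).$$
This triangular system inverts to exactly the formula in Definition \ref{hv}, via the standard identity $\sum_{m}(-1)^m\binom{a}{m}\binom{m}{b} = (-1)^b\delta_{a,b}$. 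Hence $h_j = h_j^\xi$, completing the proof.

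The main obstacle is the Morse-theoretic input in $(2)=(3)$: one must know both the local normal form computing the Morse index at a fixed point from the weights, and the perfectness of components of the moment map. Everything else, including the combinatorial inversion in $(2)=(1)$, is routine once the counting formula $f_k = \sum_j \binom{j}{k} h_j^\xi$ is set up correctly.
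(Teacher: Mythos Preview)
Your argument for $(2)=(3)$ is exactly the paper's: the component $\psi^\xi$ is a perfect Morse function (even indices only), and the Morse index at $\psi^{-1}(v)$ is twice the number of entering edges at $v$. The paper sketches the same thing and cites \cite{At1}.

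For $(1)=(2)$ the paper simply cites \cite[Theorem~1.3.4]{BP}; you instead spell out the standard incidence-counting proof. That is fine and is in fact the argument behind the cited reference, but your execution has an index slip. Your top-vertex count correctly gives
\[
f_k \;=\; \sum_{j=k}^{n}\binom{j}{k}\,h_j^{\xi},
\]
whose inversion is $h_j^{\xi}=\sum_{k\ge j}(-1)^{k-j}\binom{k}{j}f_k$. But Definition~\ref{hv}, rewritten with $k=n-i$, reads
\[
h_j \;=\; \sum_{k\ge n-j}(-1)^{k-(n-j)}\binom{k}{\,n-j\,}f_k,
\]
so what your inversion actually yields is $h_j^{\xi}=h_{n-j}$, not $h_j^{\xi}=h_j$. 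The cleanest fix is to count via the \emph{bottom} vertex of each face instead: a vertex with $j$ entering edges has $n-j$ exiting edges, and any $k$ of them span a $k$-face with that vertex as its $\xi$-minimum, giving $f_k=\sum_j\binom{n-j}{k}h_j^{\xi}$; this inverts exactly to Definition~\ref{hv}. Alternatively, since you have already established $(2)=(3)$, Poincar\'e duality $b_{2j}=b_{2(n-j)}$ gives $h_j^{\xi}=h_{n-j}^{\xi}$, and then $h_j^{\xi}=h_{n-j}$ becomes $h_{n-j}^{\xi}=h_{n-j}$ as desired. Either patch is one line; the approach itself is correct.
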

\begin{proof}
Since $\Delta$ is simple, the equivalence between the $h$-vector and the $h^\xi$-vector is proved in \cite[Theorem 1.3.4]{BP}.
The proof of $\mathbf{b}=\mathbf{h}^\xi$ involves a standard argument in Morse theory, which we only sketch here (see \cite{At1}).
The function $\varphi\colon M_{\Delta}\to \R$ defined as $\varphi(p):=\langle \psi(p),\xi\rangle$ is, for a generic $\xi\in \R^n$, a Morse function with only even Morse indices and so 
it is a perfect Morse function.
Moreover, its critical points agree with the fixed points of the torus action which, in turn, are in bijection with the vertices of $\Delta$. At a critical point $p$, the Morse index
is precisely twice the number of edges entering in $v=\psi(p)$ (the orientation being that induced by $\xi$). By a standard
argument in Morse theory, we have that $b_{2j}(M_{\Delta})=h_j^\xi$ for every $j=0,\ldots,n$. 
\end{proof}

\subsection{Monotone toric manifolds}\label{mtm} 
\begin{defin}
A symplectic manifold $(M,\omega)$ is called {\bf monotone} if $c_1=r[\omega]$ for some $r\in \R$.
If, in addition, $(M,\omega,\psi)$ is a symplectic toric manifold, it is called a {\bf monotone toric manifold}.
\end{defin}  

If  $\Delta$ is a  {\bf Delzant reflexive polytope} and $(M_\Delta,\omega,\psi)$ is the corresponding symplectic toric manifold (unique up to equivariant symplectomorphisms), we have
the following  proposition  (see \cite[Prop.\ 1.8]{EP} and \cite[Sect.\ 3]{M}), which is the analogue in symplectic geometry terms of Proposition \ref{equivalent}.
We include a proof
for the sake of clarity and completeness. 
\begin{prop}\label{equivalent2}
Let $\Delta\subset \R^n$ be an $n$-dimensional Delzant polytope with $\mathbf{0}$ in its interior. Then the following conditions are equivalent:
\begin{enumerate}
\item[{\bf (i)}] $\Delta$ is a reflexive polytope;
\item[{\bf (ii)}] The symplectic toric manifold $(M_\Delta,\omega,\psi)$ with $\Delta$ as moment polytope is monotone, with $c_1=[\omega]$.
\end{enumerate}
\end{prop}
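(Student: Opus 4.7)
The plan is to reduce the statement to Proposition~\ref{equivalent} via equivariant cohomology. That proposition identifies reflexivity of $\Delta$ with the vertex-Fano condition $\sum_{j=1}^n w_j = -v$ at every vertex $v$, where $w_1,\ldots,w_n$ are the weights at $v$. My task is thus to translate this combinatorial identity into the symplectic condition $c_1=[\omega]$.

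The key observation is that, in the Cartan model, the canonical equivariant extension $\omega-\psi$ of the symplectic form restricts at each fixed point $p=\psi^{-1}(v)$ to $-v\in H^2_\T(\mathrm{pt})\cong Lie(\T)^*$, while the equivariant first Chern class $c_1^\T$ restricts to $\sum_{j=1}^n w_j$ by the general formula for equivariant Chern classes at fixed points recalled in Section~\ref{hts} (combined with item~(2) of Lemma~\ref{properties toric 0}, which identifies the polytope weights at $v$ with the $\T$-weights at $p$). By Kirwan injectivity for Hamiltonian torus actions on compact symplectic manifolds, an equivariant class is determined by its restrictions to the fixed points; thus $c_1^\T=[\omega-\psi]$ in $H^2_\T(M_\Delta;\R)$ if and only if $\sum_j w_j=-v$ at every vertex, which by Proposition~\ref{equivalent} is equivalent to reflexivity.

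To connect the equivariant equality with condition~(ii), I use that $M_\Delta$ is equivariantly formal, so the forgetful map $H^*_\T(M_\Delta)\to H^*(M_\Delta)$ is surjective with kernel in degree two generated by $H^2_\T(\mathrm{pt})=Lie(\T)^*$. The direction (i)$\Rightarrow$(ii) is then immediate. For (ii)$\Rightarrow$(i), the hypothesis $c_1=[\omega]$ only gives $c_1^\T-[\omega-\psi]=\alpha$ for some constant $\alpha\in Lie(\T)^*$; restriction to any fixed point yields $\sum_j w_j+v=\alpha$ uniformly across the vertices, which is the vertex-Fano condition for the translated polytope $\Delta-\alpha$. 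Thus $\Delta-\alpha$ is reflexive.

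The step I expect to require the most care is showing $\alpha=\mathbf{0}$, which is what upgrades \emph{``a translate of $\Delta$ is reflexive''} to \emph{``$\Delta$ itself is reflexive''}. Here I would invoke the fact that a reflexive polytope contains $\mathbf{0}$ as its \emph{unique} interior lattice point: since $\alpha$ is automatically in $\Z^n$ (being a $\Z$-linear combination of weights and a vertex of $\Delta$), the reflexivity of $\Delta-\alpha$ forces $\alpha$ to be the unique interior lattice point of $\Delta$; but by hypothesis $\mathbf{0}$ is also an interior lattice point of $\Delta$, so $\alpha=\mathbf{0}$, and $\Delta$ is reflexive. This closes the equivalence.
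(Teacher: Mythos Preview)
Your argument is essentially the paper's own: Kirwan injectivity identifies $c_1^\T=[\omega-\psi]$ with the vertex-Fano condition at every fixed point, and the forgetful map (with kernel $Lie(\T)^*$ in degree two) translates this into $c_1=[\omega]$ modulo a constant. For {\bf (i)}$\Rightarrow${\bf (ii)} the two proofs are identical; for {\bf (ii)}$\Rightarrow${\bf (i)} the paper obtains $c_1^\T=[\omega-\psi]+c$ for some $c\in Lie(\T)^*$ and then simply says ``modulo shifting the moment map by $c$'' the vertex-Fano condition holds --- it does \emph{not} argue that $c=0$.

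Your final paragraph attempts more than the paper does, and that extra step has a gap. You assert $\alpha\in\Z^n$ because $\alpha=\sum_j w_j+v$; the weights $w_j$ are indeed lattice vectors, but a vertex $v$ of a Delzant polytope need not lie in $\Z^n$ (Definition~\ref{delzant def} imposes no integrality on vertices). In fact, take any Delzant reflexive $\Delta_0$ and translate it by a small non-lattice vector so that $\mathbf{0}$ remains interior: the resulting $\Delta$ still has $c_1=[\omega]$ (the pair $(M_\Delta,\omega)$ is unchanged) but is not integral, hence not reflexive. So the literal statement cannot be rescued by your uniqueness-of-interior-lattice-point argument; the paper's proof, and the remark preceding it (``$\psi$ can be chosen so that\ldots''), should be read as establishing {\bf (i)} only up to the natural translation ambiguity of the moment map.
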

\begin{rmk}
Note that, from Proposition~\ref{equivalent},  being a Delzant reflexive polytope is equivalent to the condition that for every vertex $v$ we have 
\begin{equation}\label{eq:reflexive}
\sum_{j=1}^n w_j = -v,
\end{equation}
where $w_1,\ldots,w_n$ are the weights at $v$. This is equivalent to the fact that $\psi$ can be chosen\footnote{The moment map of a torus action is only defined up to a constant vector in $Lie(\T)^*$.}
so that 
\begin{equation}\label{chern condition 2}
c_1^\T(p)=-\psi(p)\quad\mbox{for every}\quad p\in M^\T\,,
\end{equation}
where $c_1^\T$ denotes the equivariant first Chern class of $TM$, and $c_1^\T(p)$ its restriction to the fixed point $p$.
Indeed, for every fixed point $p\in M^\T$, we have that $c_1^\T(p)$ is precisely the sum of the weights at $p$ and so the equivalence  follows from Lemma \ref{properties toric 0} (1)-(2).
\end{rmk}
\begin{proof}[Proof of Prop.\ \ref{equivalent2}]
{\bf (i)}$\implies${\bf (ii)}
By the Kirwan Injectivity Theorem \cite{Ki},  the map  
$$H^*_\T(M;\R)\to H^*_\T(M^\T;\R)$$ 
in equivariant cohomology
induced by the inclusion $M^\T \hookrightarrow M$ is always \emph{injective} for Hamiltonian torus actions. When the fixed points are isolated, one can take $\Z$ as coefficient ring. Hence, \eqref{chern condition 2} implies that 
$$c_1^\T=[\omega-\psi]\in H^2_\T(M;\Z).$$ 
(Note that here we regard $H^2_\T(M;\Z)$ as a subgroup of $H^2_\T(M;\R)$, where $[\omega-\psi]$ naturally lives.)
Since the restriction map 
$$r\colon H^*_\T(M;\R)\to H^*(M;\R)$$ 
takes $c_1^\T$ to $c_1$ and $[\omega-\psi]$ to $[\omega]$, we conclude that {\bf (i)}, which is equivalent to \eqref{chern condition 2}, implies {\bf (ii)}.

{\bf (ii)}$\implies${\bf (i)} The kernel of $r$ is precisely the ideal generated by $\mathbb{S}(Lie(\T)^*)$, the symmetric algebra on $Lie(\T)^*$. Since both $c_1$ and $[\omega]$ admit equivariant
extensions, given respectively by $c_1^\T$ and $[\omega-\psi]$, it follows  from {\bf (ii)} that 
$$c_1^\T=[\omega-\psi]+c,$$ for some constant vector $c\in Lie(\T)^*$. Hence, modulo shifting the moment map by $c$, we have that \eqref{chern condition 2}, which is equivalent to {\bf (i)}, holds. 
\end{proof}

Given an edge  $e$ of the polytope $\Delta$, it is possible to recover the symplectic volume of the $\T$-invariant $2$-sphere $S^2_e$. This is exactly
the relative length of $e$.
\begin{lemma}\label{volume}
Let $(M,\omega,\psi)$ be a symplectic toric manifold and let $\Delta$ be the corresponding moment polytope. 
Then for each edge $e\subset \Delta$ with vertices $v_1,v_2$ we have
$$v_2-v_1=Vol_\omega(S^2_e)w_e=l(e)\,w_e,$$ where $Vol_\omega(S^2_e)=\int_{S^2_e}i^*\omega$ is the symplectic volume of $S^2_e$ (with $i\colon S^2_e\hookrightarrow M$ the inclusion map) and   $w_e\in \ell^*$ is the weight of the $\T$-action on $S^2_e$ at $\psi^{-1}(v_1)$. In particular, $l(e)=Vol_\omega(S^2_e)$.
\end{lemma}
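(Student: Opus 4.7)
The plan is to restrict the moment map $\psi$ to the invariant sphere $S^2_e=\psi^{-1}(e)$ and reduce the statement to the well-known volume formula for a Hamiltonian circle action on $S^2$.

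First I would fix the fixed point $p_1:=\psi^{-1}(v_1)$ and invoke the equivariant Darboux theorem to pick local complex coordinates $(z_1,\dots,z_n)$ around $p_1$ in which the $\T$-action takes the form \eqref{weightsII} and the moment map becomes $\psi(z)=v_1+\pi\sum_{j=1}^n |z_j|^2 w_j$ (up to a sign depending on conventions). Choosing the index so that $w_1=w_e$ is the weight along $e$ (Lemma \ref{properties toric 0}(2)), the curve $\{z_2=\dots=z_n=0\}$ lies in $S^2_e$ and is mapped by $\psi$ into the ray $v_1+\R_{\geq 0}\,w_e$; by connectedness of $e$, this forces the whole edge $e$ to lie on the line through $v_1$ in the direction $w_e$. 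Hence there is a (positive) scalar $c$ with $v_2-v_1=c\,w_e$, and since $w_e\in \ell^*$ is primitive, this $c$ equals $l(e)$ by definition.

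Next, since $w_e$ is primitive in $\ell^*$, I can pick $\eta\in\ell\subset Lie(\T)$ with $w_e(\eta)=1$. The function $\psi_\eta:=\langle\psi,\eta\rangle\colon M\to\R$ is a moment map for the circle subgroup generated by $\eta$; its restriction to $S^2_e$ is a moment map for the induced effective $\T/K_e\cong S^1$ action on $S^2_e$, whose only fixed points are $p_1$ and $p_2:=\psi^{-1}(v_2)$. The standard computation on $(S^2,\omega)$ with a Hamiltonian $S^1$ action having two fixed points (write $S^2$ minus the fixed points as $(\mu_{\min},\mu_{\max})\times\R/\Z$ in action-angle coordinates, so that $\omega=d\mu\wedge d\theta$) yields
\[
\int_{S^2_e}\omega\;=\;\psi_\eta(p_2)-\psi_\eta(p_1)\;=\;\langle v_2-v_1,\eta\rangle\;=\;c\,w_e(\eta)\;=\;c.
\]
Combining the two computations gives $v_2-v_1=c\,w_e$ with $c=\mathrm{Vol}_\omega(S^2_e)=l(e)$, which is the claim.

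The only mildly delicate step is the second one: verifying that the edge direction in the polytope coincides with the weight $w_e$ of the $\T$-action on $S^2_e$ at $p_1$. Everything else is either the definition of relative length or the one-dimensional Duistermaat--Heckman/volume identity on $S^2$. No further input is needed.
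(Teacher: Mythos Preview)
Your proof is correct. The paper takes a different, slightly slicker route: it applies the Atiyah--Bott--Berline--Vergne localization formula directly to the equivariantly closed form $i^*(\omega-\psi)$ on the $\T$-invariant sphere $S^2_e$, obtaining in one stroke
\[
\Big(\int_{S^2_e}i^*\omega\Big)\,w_e \;=\; \Big(\int_{S^2_e}i^*(\omega-\psi)\Big)\,w_e \;=\; v_2-v_1.
\]
Your argument instead unpacks this into two more elementary pieces: the equivariant Darboux chart to see that the edge points in the direction $w_e$ (a fact already recorded as Lemma~\ref{properties toric 0}(2), so you could shorten that step to a citation), and then the one-dimensional Duistermaat--Heckman identity $\int_{S^2}\omega=\mu_{\max}-\mu_{\min}$ after pairing $\psi$ with a lattice vector $\eta$ satisfying $w_e(\eta)=1$. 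The trade-off is the expected one: your version is self-contained and avoids invoking ABBV as a black box, while the paper's version is a one-line application of a localization theorem that it uses repeatedly elsewhere anyway (e.g.\ in Lemma~\ref{P Dual}).
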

\begin{proof}
The proof of this lemma is a standard application of the Atiyah-Bott-Berline-Vergne Localization Theorem (ABBV in short)  \cite{AB,BV} to the $\T$-invariant submanifold $S^2_e$. Indeed, this sphere inherits a Hamiltonian $\T$-action from $M$, implying that
the $2$-form $i^*\omega$ can be extended to an equivariant form $i^*(\omega-\psi)$ which, by \eqref{mm}, is equivariantly closed in the Cartan complex with differential $d_{\T}=d-\sum_j\iota_{\xi_j}\otimes x^j$. Applying ABBV to $i^*(\omega-\psi)$, we obtain
$$
Vol_\omega(S^2_e)\, w_e = \left(\int_{S^2_e}i^*\omega\right) w_e = \left(\int_{S^2_e}i^*(\omega-\psi) \right)w_e=v_2-v_1\,
$$
and the  result follows from the definitions of $l(e)$ and  $w_e$. 
\end{proof}

We are now ready to give an alternative proof of Theorem~\ref{main combinatorics} that uses symplectic geometry.

\begin{proof}[Proof of Theorem~\ref{main combinatorics}]\emph{(Symplectic toric)}\label{proof symplectic}
Let $\Delta$ be a Delzant reflexive polytope and consider a symplectic toric manifold $(M_\Delta,\omega, \psi)$ such that $\psi(M_\Delta)=\Delta$. Then by Proposition~\ref{equivalent}, the manifold $M_\Delta$ is monotone with $c_1=[\omega]$. Let $\mathcal{S}$ be the toric $1$-skeleton of $M_\Delta$. Then
\begin{equation}
\sum_{e\in E} l(e) = \sum_{S^2_e \in \mathcal{S}} Vol_\omega(S^2_e) =   \sum_{S^2_e \in \mathcal{S}} [\omega]([S^2_e]) =    \sum_{S^2_e \in \mathcal{S}} c_1[S^2_e], 
\end{equation}
where we used Lemma~\ref{volume}. 
The conclusion follows from Theorem \ref{formula c1}.
\end{proof}

\section{Theorem \ref{A} and its consequences}\label{gt3}

In this Section we give the proof of Theorem \ref{A} (equivalent to Theorem \ref{formula c1} in the symplectic toric case) which uses a special behavior of the Hirzebruch genus. Theorem~\ref{A} applies to  a much broader category of spaces, namely Hamiltonian $S^1$-spaces admitting a `toric 1-skeleton'. In turns, this allows us to give a third proof of   Theorem \ref{main combinatorics}  (see page \pageref{symplectic tool}) and generalize it to some objects, called \emph{reflexive (GKM) graphs}, which behave very much like
Delzant reflexive polytopes.

\subsection{GKM spaces and toric $1$-skeletons}
Let $(M,\omega,\psi)$ be a Hamiltonian $\T$-space (see page \pageref{hamtspace}). 
When the torus acting is just a circle $S^1$, the weights at each $p\in M^{S^1}$ defined in \eqref{weightsII} are simply integers, and none of these can be zero,
since we are requiring the action to have isolated fixed points. 

Using a key property of the set of weights of a Hamiltonian $S^1$-space $(M,\omega,\psi)$, the authors in \cite{GoSa} define a family of multigraphs associated to $(M,\omega,\psi)$, called \emph{integral multigraphs} (that can actually be defined for any $S^1$-action on a compact almost complex manifold with isolated fixed points). 
Namely, if we collect all the weights for all $p\in M^{S^1}$, counting them with multiplicity, we obtain a multiset $W$ of non-zero integers with  the property  that  every time $k$ belongs to $W$ with multiplicity  $m$, then $-k$ belongs to $W$ with the same multiplicity. This fact was proved
by Hattori for almost complex manifolds \cite[Proposition 2.11]{Ha}. The possible  pairings between positive and negative weights is at the core of the definition of (integral, directed) multigraphs
associated to $(M,\omega,\psi)$, which we now recall (see  \cite[Section 4.2]{GoSa} for additional details).

\begin{defin}
A multigraph $\Gamma=(V,E)$ is a directed, integral multigraph associated to $(M,\omega,\psi)$ if it is a multigraph of degree $n=\dim(M)/2$ such that
\begin{itemize}
\item[(a)] The vertex set $V$ coincides with the fixed point set of the action, denoted by $M^{S^1}$;
\item[(b)] The edge set $E$ describes one of the possible bijections between the multisets of positive and negative weights;  namely, if there
exists an edge $e=(p,q)\in E\subset V\times V$ directed from $p$ to $q$, then one of the weights at $p$ is $k\in \Z_{>0}$, and one of the weights at $q$ is $-k$. We label the edge by $k$;
\item[(c)] For every edge $e=(p,q)$ labeled by $k>1$, both $p$ and $q$ belong to the same connected component of $M^{\Z_k}$, the submanifold of $M$ fixed by $\Z_k$
(see \cite[Lemma 4.8]{GoSa}).
\end{itemize}
\end{defin}
\begin{rmk}\label{not unique}
Note that the pairing between positive and negative weights, used in (b) to define the edge set, may not be unique and that every Hamiltonian $S^1$-space is associated to a \emph{family} of multigraphs. As an example, consider the semi-free $S^1$-action on $S^2\times S^2$ described in \cite[Example 4.13]{GoSa}, with $a=b=1$. This action has four possible associated integral multigraphs (see \cite[Figure 4.1]{GoSa}).  
\end{rmk}

The definition of a multigraph associated to $(M,\omega,\psi)$ is inspired by the edge set of the Delzant polytope associated to
a symplectic toric manifold or, more generally, by the
 \emph{GKM graph} associated to a GKM space. 
 \begin{defin}\label{GKM def}
Let $(M,\omega,\psi)$ be a Hamiltonian $\T$-space, with $\dim(\T)>1$. The $\T$-action is called \emph{{\bf GKM} (Goresky-Kottwitz-MacPherson  \cite{GKM})} - and the triple $(M,\omega,\psi)$ is called a {\bf(Hamiltonian) GKM space} - if, for every codimension-$1$ subgroup $K\subset \T$, the fixed submanifold
$M^K$ has dimension at most $2$. 
\end{defin}

Note that the closure of each $2$-dimensional component fixed by a codimension-$1$ subtorus $K$ is a symplectic surface smoothly embedded in $M$ (see the proof of Lemma \ref{s1 one skeleton} for details), inheriting an effective Hamiltonian action of the circle $\T/K$. Hence  it must be a sphere on which  $\T$ acts with  exactly two fixed points and,
in analogy with the toric case, we have the following definition.
\begin{defin}\label{gkm spheres}
Given a GKM space $(M,\omega,\psi)$, the union of the smoothly embedded, symplectic, $\T$-invariant spheres fixed by codimension-$1$ tori is called the {\bf toric 1-skeleton} of $(M,\omega,\psi)$.
\end{defin}

The ``GKM condition" can be rephrased in terms of the weights of the $\T$-action at the fixed points. 
Indeed, from \eqref{weightsII} it is easy to see that the action is GKM if and only if for each fixed point $p$ the weights at $p$ are pairwise linearly independent.  
Hence, (D3) and Lemma \ref{properties toric 0} (2) imply that symplectic toric manifolds are  GKM spaces. However, not every Hamiltonian GKM space is toric because in general
$$\frac{\dim(M)}{2}-\dim(\T)\geq 0.$$
Moreover, the spheres in the toric 1-skeleton of the GKM space $(M,\omega,\psi)$ may not be in one-to-one correspondence with the edges of the convex polytope $\psi(M)$, as the image of some of these spheres may be in the interior of $\psi(M)$ (see Figure \ref{flags pic}).

\begin{figure}[htbp]
\begin{center}
\includegraphics[width=9cm]{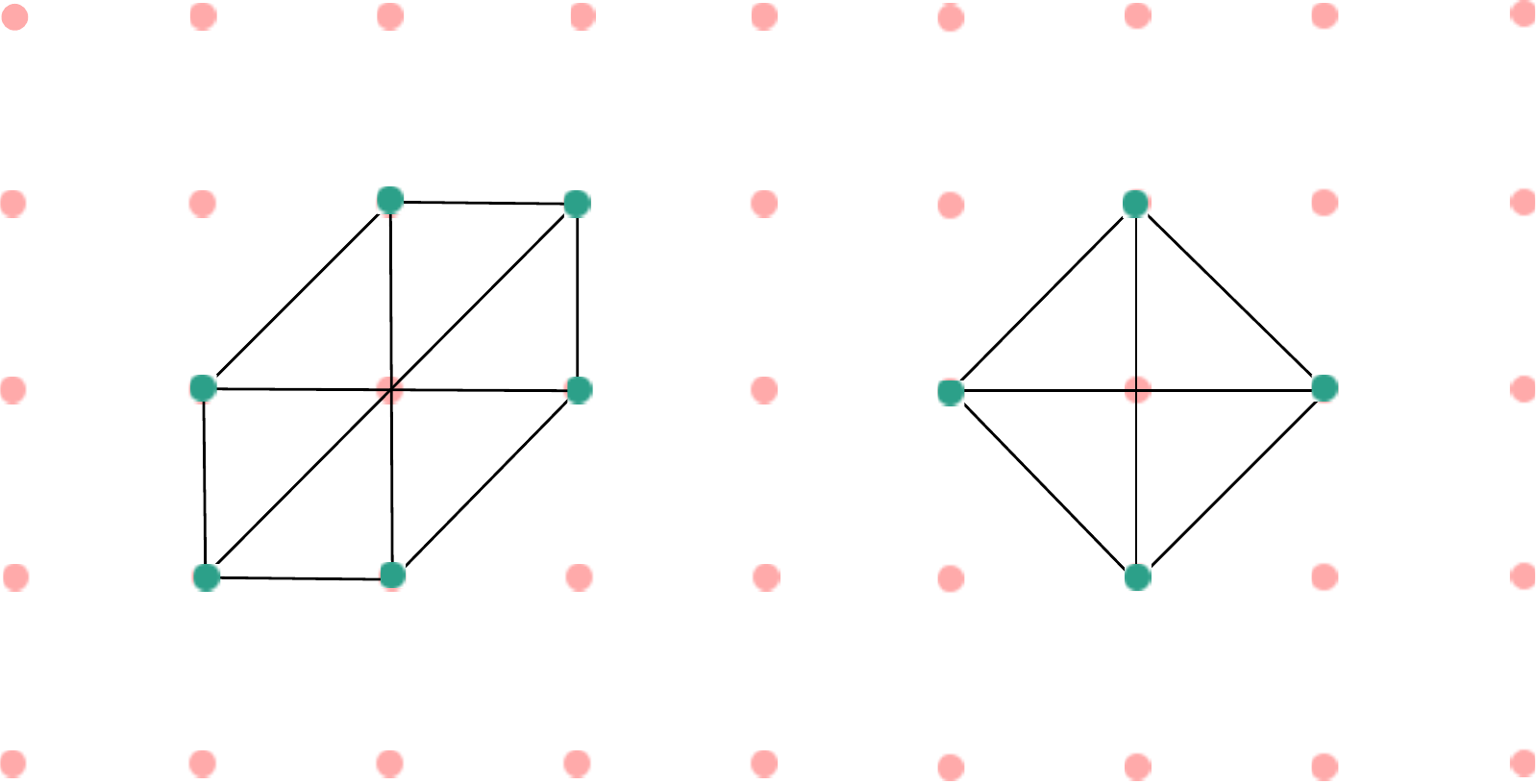}
\caption{Two examples of GKM graphs embedded in $Lie(\T^2)^*\simeq \R^2$ with lattice $\Z^2$ corresponding to
coadjoint orbits of type A and B. 
The vertices are marked in blue,
and the image of the moment map is the convex hull of the vertices.}
\label{flags pic}
\end{center}
\end{figure}

Therefore, the moment map image $\psi(M)$ is no longer enough to  keep track of information on the intersection properties of these spheres and their stabilizers. So, to every GKM space $(M,\omega,\psi)$ one associates a (directed, labeled) graph $\Gamma_{GKM}=(V,E_{GKM})$, called \emph{(directed, labeled) GKM graph}. This is defined as follows. 
First, pick a generic vector $\xi\in Lie(\T)$, i.e.\  $\langle w,\xi\rangle\neq 0$ for every $w\in \ell^*$ that occurs as a weight of the $\T$-action at a fixed point. 
Assuming that $\xi$ generates a circle subgroup $C$ in $\T$, let $\varphi:=\psi^{\xi}\colon M\to \R$ be the $\xi$-component of the moment map, namely 
$$\varphi(\cdot)=\langle \psi(\cdot), \xi \rangle.$$ 
Note that
this is a moment map for the action of $C$ on $(M,\omega)$ with isolated fixed points. 

\begin{defin}\label{def:GKMgraph}
The {\bf GKM graph}  associated to a GKM space  $(M,\omega,\psi)$ is the graph  $\Gamma_{GKM}=(V,E_{GKM})$ such that:
\begin{itemize}
\item[(a')] The vertex set $V$ coincides with $M^\T$; 
\item[(b')]  The edge set $E_{GKM}\subset V\times V$ describes the intersection properties of the $2$-spheres fixed by a codimension-$1$ subtorus. In particular, there
exists an edge $e=(p,q)$ from $p$ to $q$ precisely if there exists a $2$-sphere fixed by a codimension-$1$ subtorus of $\T$,
where $\T$ acts with  fixed points $p$ and $q$, and $\varphi(p)<\varphi(q)$.
\item[(c')] Each edge $e=(p,q)\in E_{GKM}$ with associated sphere $S^2_e$ is labeled by the weight $w_e\in \ell^*$ of the $\T$-action on $S^2_e$ at $p$. 
\end{itemize}
\end{defin}

\begin{rmk}\label{S1-GKM}
$\;$\vspace{0.1cm}
\begin{enumerate}
\item[(i)] The GKM graph of a GKM-space $(M,\omega,\psi)$ is automatically $n$-valent, where $n=\dim(M)/2$.
\item[(ii)] In contrast with  Hamiltonian $S^1$-spaces, every GKM space $(M,\omega,\psi)$ has a \emph{unique} GKM graph (see Remark \ref{not unique}). 
\item[(iii)] Suppose that, given a Hamiltonian $S^1$-space $(M,\omega,\psi)$, the $S^1$-action extends to a Hamiltonian GKM action of a torus $\T$  on $(M,\omega)$,  and
let $\xi$ be a primitive generic vector in $\ell$ (the weight lattice of $\T$) so that
$\exp(\R \xi)=S^1$. Then one of the possible multigraphs associated to $(M,\omega,\psi)$ is the GKM graph of the $\T$-action directed by $\xi$. This is labeled as follows: If an edge $e$ of the GKM graph is labeled by $w_e\in \ell^*$, then $e$, regarded as an edge of the multigraph associated to the Hamiltonian $S^1$-space $(M,\omega,\psi)$,  is labeled by $w_e(\xi)$.
\end{enumerate} 
\end{rmk}

\begin{rmk}\label{embed gkm}
Using the moment map, one can map the GKM graph into $Lie(\T)^*$ in the following way:
\begin{itemize}
\item[(a'')] A vertex $v$, corresponding to $p\in M^\T$ is mapped to the corresponding value of the moment map $\psi(p)$; 
\item[(b'')] An edge $e=(p,q)$, corresponding to a sphere $S^2_e$ is mapped to the line segment from $\psi(p)$ to $\psi(q)$, which is exactly $\psi(S^2_e)$. 
\item[(c'')] If $\ell^*$ is the dual lattice in $Lie(\T)^*$, the weight $w_e$ associated to the edge $e=(p,q)$ is the vector with the same direction of $\psi(q)-\psi(p)$ which is primitive in 
$\ell^*$.
\end{itemize}
\end{rmk}

The concept of relative length in Definition \ref{int length} can be easily generalized to line segments in $Lie(\T)^*$ endowed with lattice $\ell^*$.

If $e\in E_{GKM}$ is an edge of a GKM graph, it is again possible to recover the symplectic volume of the corresponding sphere in the toric 1-skeleton, in analogy
with Lemma \ref{volume}. 
\begin{lemma}\label{volume2}
Let $(M,\omega,\psi)$ be a GKM space, and let $(V,E_{GKM})$ be its labeled GKM graph. 
Then, for each edge $e=(p,q)\in E_{GKM}$, we have
$$\psi(q)-\psi(p)=Vol_\omega(S^2_e)w_e=l(e)\,w_e,$$ where $Vol_\omega(S^2_e)=\int_{S^2_e}i^*\omega$ is the symplectic volume of $S^2_e$ and $i\colon S^2_e\hookrightarrow M$ is the inclusion map. 
\end{lemma}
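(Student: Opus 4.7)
The plan is to follow the proof of Lemma \ref{volume} essentially verbatim, since the only structural difference between the toric and GKM settings is that on the invariant sphere $S^2_e$ the torus $\T$ acts through the one-dimensional quotient $\T/K_e$ rather than acting effectively. First I would observe that $S^2_e$ is a smoothly embedded symplectic $\T$-invariant submanifold with exactly two $\T$-fixed points, namely the endpoints $p$ and $q$ of the edge $e$. Hence the restriction $i^*\psi \colon S^2_e \to Lie(\T)^*$ is a moment map for the $\T$-action, and by \eqref{mm} the equivariant form $i^*(\omega - \psi)$ is closed in the Cartan complex of $S^2_e$ with differential $d_\T = d - \sum_j \iota_{\xi_j} \otimes x^j$.

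Next I would apply ABBV localization on $S^2_e$ to $i^*(\omega - \psi)$. At the two fixed points $p$ and $q$ the normal bundles coincide with the tangent lines $T_p S^2_e$ and $T_q S^2_e$, whose equivariant Euler classes are the $\T$-weights $w_e$ and $-w_e$ respectively (the weights at the two poles of an invariant $2$-sphere are opposite). The restrictions of $i^*(\omega - \psi)$ to the fixed points are $-\psi(p)$ and $-\psi(q)$, so localization yields
$$\int_{S^2_e} i^*(\omega - \psi) = \frac{-\psi(p)}{w_e} + \frac{-\psi(q)}{-w_e} = \frac{\psi(q) - \psi(p)}{w_e}.$$
On the other hand, equivariant integration picks out only the topologically top-degree piece: since $\psi$ is a $Lie(\T)^*$-valued function of topological degree zero and $S^2_e$ is two-dimensional, the $\psi$-term contributes nothing and one obtains $\int_{S^2_e} i^*\omega = Vol_\omega(S^2_e)$. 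Equating the two expressions and clearing the factor $w_e$ gives $\psi(q) - \psi(p) = Vol_\omega(S^2_e)\, w_e$. Since by Remark \ref{embed gkm}(c'') the labeling $w_e \in \ell^*$ is primitive and $Vol_\omega(S^2_e) > 0$, Definition \ref{int length} identifies $Vol_\omega(S^2_e)$ with $l(e)$.

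The main subtlety to confirm is the legitimacy of ABBV in the non-effective setting: $\T$ acts on $S^2_e$ with kernel the codimension-one subtorus $K_e$, so a priori one might worry whether the equivariant Euler classes should be taken in $\mathbb{S}(Lie(\T/K_e)^*)$ rather than in $\mathbb{S}(Lie(\T)^*)$. However, ABBV requires only isolated fixed points together with nonzero equivariant Euler classes of the normal bundles, and $w_e$ is nonzero in $\ell^* \subset Lie(\T)^*$ (it merely annihilates $Lie(K_e)$). Both localized contributions then live in the appropriate localization of $\mathbb{S}(Lie(\T)^*)$, and after clearing denominators the final equality of $Lie(\T)^*$-valued quantities is unambiguous.
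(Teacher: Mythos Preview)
Your proof is correct and follows exactly the approach indicated by the paper, which simply states that the argument is \emph{mutatis mutandis} that of Lemma~\ref{volume}. In fact you supply more detail than the paper does, including the discussion of why ABBV applies despite the $\T$-action on $S^2_e$ having a codimension-one kernel; this is a worthwhile clarification that the paper leaves implicit.
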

The proof is, \emph{mutatis mutandis}, the same as that of Lemma \ref{volume}.
\begin{rmk}\label{partial embed}
Note that Lemma \ref{volume2} and the ``GKM condition" (the weights of the action at each fixed point are pairwise linearly independent) imply that
there is exactly one edge $e$ of the GKM graph with endpoints $p$ and $q$. Hence, if $\psi(S^2_e)=\psi(S^2_f)$, then $S^2_e=S^2_f$, for every $e,f\in E_{GKM}$.
\end{rmk}

Symplectic toric and GKM spaces motivate the following definition, now for Hamiltonian $S^1$-spaces.
\begin{defin}\label{smooth 1-skeleton}
Let $(M,\omega,\psi)$ be a Hamiltonian $S^1$-space. 
\begin{enumerate}
\item We say that $(M,\omega,\psi)$ {\bf admits a toric 1-skeleton} if
there exists an integral multigraph $\Gamma=(V,E)$ associated to $(M,\omega,\psi)$ satisfying the following property:
For each edge $e=(p,q)\in E$ labeled by $k\in \Z_{>0}$ there exists a smoothly embedded, symplectic, $S^1$-invariant sphere fixed by $\Z_k$, where $S^1$ acts with fixed points $p$ and $q$.  
\item 
If such a multigraph $\Gamma$ exists, the set of spheres $\mathcal{S}=\{S^2_e\}_{e\in E}$ obtained by picking for each edge $e\in E$ exactly one sphere $S^2_e$ satisfying the properties in \emph{(1)} is called the {\bf toric 1-skeleton} of $(M,\omega,\psi)$ associated to $\Gamma$. 
\end{enumerate}
\end{defin} 
The idea behind the toric 1-skeleton is the following. Consider an $S^1$-invariant metric on $M$, and let $\operatorname{grad}\psi$ be the gradient of $\psi$ w.r.t. this metric.
Then the $\R$-action associated to the flow of $\operatorname{grad}\psi$ commutes with the $S^1$ action, giving rise to a $\C^*=S^1\times \R^+$-action on $M$. 
For each $p\in M$, the closure of the $\C^*$-orbit through $p$ is an embedded, symplectic ($S^1$-invariant) $2$-sphere, not necessarily smooth at the poles. The toric 1-skeleton exists if one can pick a subset of such spheres
satisfying the properties in (1) (see \cite[Section 3]{AH}).  

The name follows from the  fact that, if such spheres exist, they are
symplectic sub\-mani\-folds endowed with a Hamiltonian circle action with stabilizer $\Z_k$, and so  they admit an effective Hamiltonian action of the circle $S^1/\Z_k$, turning each of them into a symplectic toric (sub)manifold.
Also note that from (2) the cardinality of $\mathcal{S}$ is finite and equal to 
$$|E|=\displaystyle\frac{\dim(M)}{4} \,\vert M^{S^1}\rvert=\displaystyle\frac{n}{2}\,\chi(M),$$ where $\chi(M)$ denotes the Euler characteristic\footnote{Indeed $|M^{S^1}|=\chi(M)$ holds for every compact almost complex manifold with an $S^1$ with isolated fixed points.} of $M$ and $n=\dim (M)/2$. 
Hence, Hamiltonian $S^1$-spaces admitting a toric 1-skeleton have a special finite subset of $S^1$-invariant symplectic spheres. 
The following Lemma gives us some examples  of Hamiltonian $S^1$-spaces admitting a toric 1-skeleton. 

\begin{lemma}\label{s1 one skeleton}
Let $(M,\omega,\varphi)$ be a Hamiltonian $S^1$-space. If $(M,\omega,\varphi)$  satisfies one of the following conditions, then it admits a toric 1-skeleton:
\begin{enumerate}
\item[(i)] the $S^1$-action extends to a GKM action, (or, in particular, to a symplectic toric action);
\item[(ii)] none of the weights is equal to $1$ and,  at each fixed point $p\in M^{S^1}$, the weights are pairwise prime;
\item[(iii)] the (real) dimension of $M$ is $4$.
\end{enumerate}
\end{lemma}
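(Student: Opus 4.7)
The plan is to treat each of the three hypotheses separately, producing in each case both an integral multigraph $\Gamma = (V,E)$ associated to $(M,\omega,\varphi)$ and, for every edge, a smoothly embedded symplectic $S^1$-invariant sphere with the right stabilizer.

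For \textbf{(i)}, the approach is to import the GKM graph wholesale via Remark~\ref{S1-GKM}(iii). Picking a primitive generic $\xi \in \ell$ with $\exp(\R\xi) = S^1$, direct the GKM graph of the $\T$-action by $\xi$ and relabel every edge $e$ by the integer $w_e(\xi)$; this produces one of the integral multigraphs attached to $(M,\omega,\varphi)$. Each GKM sphere $S^2_e$ is already smoothly embedded, symplectic, and $\T$-invariant, and its $\T$-stabilizer is $K_e = \ker w_e$; restricting to $S^1$ shows that $\Z_{|w_e(\xi)|}$ acts trivially on $S^2_e$, which is exactly what Definition~\ref{smooth 1-skeleton} demands.

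For \textbf{(ii)}, the plan is to define the edges geometrically via isotropy submanifolds. Since no weight equals $\pm 1$ and the weights at each fixed point are pairwise coprime, for every $p \in M^{S^1}$ and every positive weight $k$ at $p$ the subspace $T_p M^{\Z_k}$ coincides with the $k$-eigenspace of the $S^1$-action, hence is two-dimensional. The connected component $N_p^k$ of $M^{\Z_k}$ through $p$ is therefore a compact symplectic surface carrying an effective Hamiltonian $S^1/\Z_k$-action with a fixed point, so it is a smoothly embedded $S^1$-invariant $2$-sphere meeting $M^{S^1}$ in exactly two points $p$ and $q$, with $S^1$-weight $-k$ at $q$ (and, by pairwise coprimality, of multiplicity one). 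Pairing each positive weight at $p$ with the matching negative weight at the partner fixed point defines $\Gamma$, condition (c) of the multigraph definition is automatic, and $\mathcal{S} := \{N_p^k\}$ is the corresponding toric $1$-skeleton.

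For \textbf{(iii)}, the plan is to realize the spheres as closures of $\C^*$-orbits through the fixed points. Fix an $S^1$-invariant $\omega$-compatible almost Kähler metric and let $\C^* = S^1 \times \R^+$ act by combining the $S^1$-action with the flow of $\operatorname{grad}\varphi$. At each $p \in M^{S^1}$ the local equivariant normal form produces two $\C^*$-invariant holomorphic disks, one tangent to each of the two weight eigenspaces; by the structure theory of Hamiltonian $S^1$-manifolds of real dimension four, the closure of each such disk is a smoothly embedded symplectic $S^1$-invariant $2$-sphere whose only $S^1$-fixed points are its two poles and which is pointwise fixed by $\Z_k$ when the associated weights are $\pm k$. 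Reading off the endpoint pairing produces the multigraph $\Gamma$ and the associated toric $1$-skeleton.

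The main obstacle is part \textbf{(iii)}: in cases (i) and (ii) the spheres are handed to us as globally defined smooth submanifolds (GKM spheres, or components of $M^{\Z_k}$) and no analytic input is required, whereas in dimension four one must genuinely verify that the closure of a $\C^*$-orbit extends \emph{smoothly} across the poles. This is precisely the step that fails in higher dimensions without one of the stronger hypotheses (i) or (ii), and it is what makes the four-dimensional structure theory indispensable here.
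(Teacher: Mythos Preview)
Your treatment of (i) and (ii) matches the paper's proof essentially verbatim: import the GKM graph via Remark~\ref{S1-GKM}(iii) for (i), and use the components of $M^{\Z_k}$ as the spheres for (ii).

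For (iii) you take a genuinely different route. The paper does not attempt to build the spheres directly; instead it invokes Karshon's theorem \cite[Theorem~5.1]{K} that any Hamiltonian $S^1$-action with discrete fixed locus on a 4-manifold extends to a toric $\T^2$-action, and then simply applies (i). Your approach---forming $\C^*$-orbit closures and asserting smoothness ``by the structure theory of Hamiltonian $S^1$-manifolds of real dimension four''---is not wrong, but the black box you are invoking is precisely the content of Karshon's (or Ahara--Hattori's \cite{AH}) classification, and the smoothness of gradient spheres in dimension~4 is most cleanly seen \emph{through} the toric extension rather than independently of it. So the paper's one-line reduction to (i) is both shorter and more transparent: it names the exact theorem doing the work, whereas your version leaves the reader to locate the smoothness statement inside the four-dimensional structure theory. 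If you want to keep your direct approach, you should cite the specific result (e.g.\ \cite[Section~3]{AH} or the relevant lemma in \cite{K}) guaranteeing that the orbit closures are smooth at the poles; otherwise the argument reads as an assertion rather than a proof.
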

\begin{proof}
(i) Let us assume that  the action of $S^1$ extends to a GKM-action of a torus $\T$ and consider its GKM toric $1$-skeleton $\mathcal{S}$ as in Definition~\ref{gkm spheres} and its GKM graph $\Gamma_{GKM}=(V,E_{GKM})$. Each sphere in $\mathcal{S}$ is $\T$-invariant, and therefore is also $S^1$-invariant. Moreover, since  $S^1$ has a discrete fixed point, set it does not fix any sphere in $\mathcal{S}$, and every connected component of $M^{S^1}$ (isolated points) is $\T$-invariant. Consequently, $S^1$ has exactly two fixed points on each sphere, which must coincide with the two $\T$-fixed points and $M^{S^1}=M^\T$. 

Let us take $\xi\in Lie(\T)$ such that $S^1=\{\textrm{exp} (t\xi)\mid t \in \R \}$. Since $S^1$ has a discrete fixed point set, $\langle w,\xi\rangle\neq 0$ for every $w\in \ell^*$ that occurs as a weight of the $\T$-action at a fixed point. If $S$ is a sphere in $\mathcal{S}$ with associated edge $e=(p,q)\in E_{GKM}$ labeled by the weight $w_e\in \ell^*$, then $\langle w_e,\xi\rangle$ is the $S^1$-isotropy weight at $p$, while $-\langle w_e,\xi\rangle$ is the one at $q$. Moreover, it is clear that if $\lvert \langle w_e,\xi\rangle \rvert =k >1$, then both $p$ and $q$ belong to the same connected component of $M^{\Z_k}$  (the set of points fixed by the finite subgroup $\Z_k\subset S^1$). 

We conclude  that $\Gamma_{GKM}$ is an integral multigraph $\Gamma=(E,V)$ for $(M,\omega,\varphi)$ and, for each edge $e=(p,q)\in E$ labeled by $k=\lvert \langle w_e,\xi\rangle \rvert \in \Z_{>0}$, there exists a smoothly embedded sphere fixed by $\Z_k$ (the corresponding sphere in $\mathcal{S}$), where $S^1$ acts with fixed points $p$ and $q$.

(ii) If none of the weights is equal to $1$ and at each fixed point the weights are pairwise prime then, for each isotropy group $\Z_k\subset S^1$, the connected components of the manifolds $M^{\Z_k}$ are  smoothly embedded closed symplectic 2-spheres (here called $\Z_k$-spheres). Each of these spheres contains exactly two fixed points. Moreover, for $k\geq 2$, a fixed point has weight $-k$ if and only if it is the north pole of a $\Z_k$-sphere and it has weight $k$ if and only if it is the south pole of a $\Z_k$-sphere.  Hence, there is a pairing between the positive and negative weights resulting in a multigraph for the $S^1$-action that satisfies Definition~\ref{smooth 1-skeleton}, and the result follows.

(iii) If $\dim (M)=4$ then, since $S^1$ has a discrete fixed point set, it extends to a $\T^2$ toric action \cite[Theorem 5.1]{K}, and the result follows from (i).

\end{proof}
\begin{rmk} 
$\;$\vspace{0.1cm}
\begin{itemize}
\item[(a)] Note that there are no examples known of Hamiltonian $S^1$-spaces with a discrete fixed point set  that do not admit a toric 1-skeleton. 
\item[(b)] Observe that in Lemma~\ref{s1 one skeleton} (i) and (ii) the smoothness of the spheres is ensured by the fact that each of them is a connected component of an isotropy submanifold, i.e. the set of
points in $M$ fixed by some subgroup of the torus. 
\item[(c)] In \cite{K}, Karshon exhibits an algorithm to extend a Hamiltonian $S^1$-action with isolated fixed points on a $4$-dimensional symplectic manifold to a toric $\T^2$-action.
Such extension is not unique. From the discussion about the toric 1-skeleton of symplectic toric manifolds we deduce that the same Hamiltonian $S^1$-space may admit more than
one toric 1-skeleton.
\end{itemize}
\end{rmk}

An important  topological property of the toric 1-skeleton comes from Poincar\'e duality.
\begin{lemma}\label{P Dual}
Let $(M,\omega,\psi)$ be a Hamiltonian $S^1$-space admitting a toric 1-skeleton $\mathcal{S}=\{S^2_e\}_{e\in E}$. Then the Chern class $c_{n-1}\in H^{2(n-1)}(M;\Z)$ is the Poincar\'e dual to the class of $\mathcal{S}\in H_2(M;\Z)$.
\end{lemma}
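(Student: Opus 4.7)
My plan is to prove the identity $c_{n-1}=PD([\mathcal{S}])$ in $H^{2n-2}(M;\Z)$ by verifying that the two classes pair identically with every element of $H^2(M;\Z)$. Since a generic component of $\psi$ is a perfect Morse function on $M$ with only even indices (cf.\ the proof of Lemma~\ref{hequiv}), $H^{\mathrm{odd}}(M;\Z)=0$ and $H^{\ast}(M;\Z)$ is torsion-free. Poincar\'e duality then gives a perfect pairing $H^{2n-2}(M;\Z)\otimes H^{2}(M;\Z)\to\Z$, so a class in $H^{2n-2}(M;\Z)$ is determined by its pairings with $H^2(M;\Z)$.

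Fix $\alpha\in H^{2}(M;\Z)$. Because $H^{\mathrm{odd}}(M;\Z)=0$, the Leray--Serre spectral sequence of the Borel fibration $M\hookrightarrow M\times_{S^1}ES^1\to BS^1$ degenerates, so $\alpha$ lifts to an equivariant class $\alpha^{S^1}\in H^{2}_{S^1}(M;\Q)$; likewise, $c_{n-1}$ admits the canonical equivariant extension $c_{n-1}^{S^1}$ coming from the tangent bundle. Applying ABBV localization gives
\[
\int_M c_{n-1}\cup\alpha\;=\;\sum_{p\in M^{S^1}}\frac{c_{n-1}^{S^1}(p)\,\alpha^{S^1}(p)}{e^{S^1}(p)}.
\]
Letting $u$ denote the generator of $H^{2}_{S^1}(\mathrm{pt};\Z)$, writing $\alpha^{S^1}(p)=\alpha_p\,u$ with $\alpha_p\in\Q$, and using $c_{n-1}^{S^1}(p)=\sigma_{n-1}(w_1^p,\dots,w_n^p)\,u^{n-1}$ together with $e^{S^1}(p)=\prod_j w_j^p\,u^n$, the contribution of each fixed point collapses to $\alpha_p\sum_{i=1}^n 1/w_i^p$.

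To match the other side, I apply ABBV on each sphere $S^2_e$ of the toric 1-skeleton. Each such sphere carries an $S^1$-action with two fixed points and opposite isotropy weights $+k$ and $-k$, where $k$ is the label of $e$ in the integral multigraph $\Gamma=(V,E)$ underlying $\mathcal{S}$ (Definition~\ref{smooth 1-skeleton}). If $e$ is directed from $p$ to $q$, then $\int_{S^2_e}\alpha=\alpha_p/k-\alpha_q/k$. By the very definition of $\Gamma$, every weight $w$ at every fixed point $p$ corresponds to exactly one incident edge: if $w>0$ then $p$ is the tail of an edge labeled $w$, contributing $\alpha_p/w$ at $p$; if $w<0$ then $p$ is the head of an edge labeled $-w$, contributing $-\alpha_p/(-w)=\alpha_p/w$ at $p$. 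Summing these contributions over $e\in E$ yields
\[
\sum_{e\in E}\int_{S^2_e}\alpha\;=\;\sum_{p\in M^{S^1}}\alpha_p\sum_{i=1}^n\frac{1}{w_i^p},
\]
which agrees with the previous ABBV expression.

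The principal obstacle is the bookkeeping in the last step: correctly assigning each weight at each fixed point to a unique incident edge of $\Gamma$, and checking that the sign flip when the weight is negative precisely cancels the sign of the label, so that every fixed-point contribution reduces to $\alpha_p/w_i^p$. Once this matching is transparent, the rational identity $\int_M c_{n-1}\cup\alpha=\int_{\mathcal{S}}\alpha$ holds for every $\alpha\in H^{2}(M;\Z)$, and torsion-freeness of $H^{\ast}(M;\Z)$ promotes this pairing identity to the integer equality $c_{n-1}=PD([\mathcal{S}])$.
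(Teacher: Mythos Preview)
Your proof is correct and takes essentially the same approach as the paper: apply ABBV localization both to $\int_M c_{n-1}\cup\alpha$ on $M$ and to each $\int_{S^2_e}\alpha$ on the spheres of the skeleton, then match the resulting fixed-point sums $\sum_p\alpha_p\sum_i 1/w_i^p$. You are a bit more explicit than the paper about why the pairing identity suffices (invoking torsion-freeness of $H^*(M;\Z)$ from the perfect Morse function $\psi$), and you lift $\alpha$ via spectral-sequence degeneration over $\Q$ rather than Kirwan surjectivity over $\Z$, but these are minor variations on the same argument.
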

\begin{proof} Let us consider a class $\alpha\in H^2(M;\Z)$. 
We need to show that
\begin{equation}\label{wtp}
\sum_{e\in E}\int_{S^2_e} i^* \alpha = \int_M \alpha \smile c_{n-1},
\end{equation}
where  $i:\mathcal{S} \to M$ is the inclusion map.
By the Kirwan Surjectivity Theorem \cite{Ki}, we know that the restriction map 
$$r\colon H^2_{S^1}(M;\Z)\to H^2(M;\Z)$$ 
is surjective (note that the fixed points are isolated, hence we can take $\Z$-coefficients). Hence one can consider an equivariant extension 
$\alpha^{S^1}\in H^2_{S^1}(M;\Z)$  of $\alpha$ and, by dimensional reasons, we have
$$\int_{S^2_e} i^* \alpha^{S^1}=\int_{S^2_e} i^*\alpha.$$ We  can  compute the integral on the LHS of \eqref{wtp} using the ABBV Localization Theorem. Let $p,q$ be the fixed points of the $S^1$ action
on $S^2_e$, and $w_e$ the weight of the isotropy action at $p$. Then ABBV
 gives 
 $$\int_{S^2_e}i^*\alpha=\frac{\alpha^{S^1}(p)-\alpha^{S^1}(q)}{w_e}$$  
and so
\begin{equation}\label{abbv}
\sum_{e\in E}\int_{S^2_e}i^*\alpha = \sum_{e=(p,q)\in E} \frac{\alpha^{S^1}(p)-\alpha^{S^1}(q)}{w_e}\,.
\end{equation}
We can apply the same procedure to compute the integral on the RHS of \eqref{wtp}. 
Note that, if $S_e^2$ is a sphere in $\mathcal{S}$,  the weights of the $S^1$-representations on the tangent spaces at the two $S^1$-fixed points $p,q\in S_e^2$ are given respectively by $w_{1,p},w_{2,p},\ldots, w_{n,p}$ and $w_{1,q},w_{2,q},\ldots, w_{n,q}$, where $w_{1,p}=w_e$ and     $w_{1,q}=-w_e$.
Considering equivariant extensions $\alpha^{S^1}$ and $c_{n-1}^{S^1}$ of $\alpha$ and $c_{n-1}$ we again have,  for dimensional reasons, 
$$\int_M \alpha \smile c_{n-1}=\int_M \alpha^{S^1}\smile c_{n-1}^{S^1}.$$ Hence
ABBV gives
\begin{align*}
 \int_\M \alpha \smile c_{n-1} & = \sum_{p\in M^{S^1}} \frac{\alpha^{S^1}(p) c^{S^1}_{n-1}(p)}{\prod_{j=1}^n w_{j,p}} = \sum_{p\in M^{S^1}} \frac{\alpha^{S^1}(p) \left(\sum_{l=1}^n \prod_{k\neq l} w_{k,p}\right) }{\prod_{j=1}^n w_{j,p}} \\ 
 & = \sum_{p\in M^{S^1}}\sum_{k=1}^{n} \frac{\alpha^{S^1}(p)}{w_{k,p}} =  \sum_{e=(p,q)\in E} \frac{\alpha^{S^1}(p)-\alpha^{S^1}(q)}{w_e}\\
\end{align*}
and \eqref{wtp} follows.
\end{proof}
\begin{rmk}\label{remark lemma}
$\;$\vspace{0.1cm}
\begin{itemize}
\item[(a)] Lemma \ref{P Dual} is already known for (symplectic) toric manifolds. However it still holds true for the much broader category of Hamiltonian $S^1$-spaces admitting a toric 1-skeleton.
\item[(b)] For a symplectic toric manifold $(M,\omega,\psi)$ it is well known that the $j$-th Chern class  is the Poincar\'e Dual class to the sum of the fundamental classes of the preimages of $(n-j)$-dimensional faces of 
the moment polytope $\Delta$ (see \cite[Corollary 11.5]{Da}).
However, the existence of just a circle action does not allow us, in general, to define toric skeleta of higher dimensions.
\end{itemize}
\end{rmk}

\subsection{Proof of Theorem \ref{A}}\label{subsec:thmA}
Let $\mathcal{S}=\{S^2_e\}_{e\in E}$ be a toric 1-skeleton associated to $(M,\omega,\psi)$.
From Lemma \ref{P Dual}, applied to $\alpha=c_1$, we obtain
\begin{equation}\label{c1cn-1}
\sum_{e\in E}c_1[S^2_e]=\int_M c_1c_{n-1}\,,
\end{equation} 
where $c_1[S^2_e]$ denotes $\int_{S^2_e}c_1$.
So \cite[Corollary 3.1]{GoSa} gives
\begin{equation}\label{sum volumes}
\sum_{S^2_e\in \mathcal{S}} c_1\big[S^2_e\big]=\sum_{j=0}^n b_{2j}(M)\Big[6j(j-1)+\frac{5n-3n^2}{2}\Big]\,.
\end{equation}
Observe that the moment map $\psi$ is a perfect Morse function, hence the odd Betti numbers of $M$ vanish and $\chi(M)=\sum_{j=0}^n b_{2j}(M)$. Moreover,
Poincar\'e duality implies $b_{2j}(M)=b_{2(n-j)}(M)$ for every $j=0,\ldots,n$.
Set $g(j,n)=6j(j-1)+\frac{5n-3n^2}{2}$.
For $n$ even, the right hand side of \eqref{sum volumes} becomes
\begin{equation}
\begin{aligned}
\sum_{j=0}^n b_{2j}(M)g(j,n)= & -\frac{n}{2}b_n(M)+ \sum_{k=1}^{\frac{n}{2}} b_{n-2k}(M)\left[g\left(\frac{n}{2}-k,n\right)+g\left(\frac{n}{2}+k,n\right)\right]\\
 = & \;  -\frac{n}{2}b_n(M)+2\sum_{k=1}^\frac{n}{2} \left( 6k^2-\frac{n}{2}\right)b_{n-2k}(M) \\
 = & \; 12\displaystyle\sum_{k=1}^{\frac{n}{2}}( k^2b_{n-2k}(M)) -\frac{n}{2}\left[b_n(M)+2\sum_{k=1}^{\frac{n}{2}}b_{n-2k}(M)\right] \\
 = & \; 12\displaystyle\sum_{k=1}^{\frac{n}{2}} \Big[ k^2b_{n-2k}(M)\Big] - \frac{n}{2}\chi(M),
\end{aligned}
\end{equation}
where we used that $\chi(M)=\displaystyle b_n(M)+2\sum_{k=1}^{\frac{n}{2}}b_{n-2k}(M)$,
and the claim follows.

For $n$ odd, the right hand side of \eqref{sum volumes} becomes
\begin{equation}
\begin{aligned}
\sum_{j=0}^n b_{2j}(M)g(j,n)= & \sum_{k=0}^{\frac{n-1}{2}} b_{n-1-2k}(M)\left[g\left(\frac{n-1}{2}-k,n\right)+g\left(\frac{n-1}{2}+k+1,n\right)\right]\\
 = & \; 2\sum_{k=0}^\frac{n-1}{2} \left[ 6k(k+1)-\frac{n-3}{2}\right]b_{n-1-2k}(M) \\
 = & \; 12\displaystyle\sum_{k=1}^{\frac{n-1}{2}}\Big[ k(k+1)b_{n-1-2k}(M)\Big] -\left(\frac{n-3}{2}\right)\chi(M)\,,
\end{aligned}
\end{equation}
where we used that $\chi(M)=2\displaystyle\sum_{k=0}^{\frac{n-1}{2}}b_{n-1-2k}(M)$,
and the claim follows.

\begin{rmk}\label{rmk A}
$\;$\vspace{0.1cm}
\begin{itemize}
\item[(i)] There are three key facts needed to prove Theorem \ref{A} in its generality. First, \eqref{c1cn-1} holds for Hamiltonian $S^1$-spaces admitting a toric 1-skeleton. 
Then the integral of $c_1c_{n-1}$ only depends on the Hirzebruch genus \cite[Theorem 2]{Sal}, and, finally, for Hamiltonian $S^1$-spaces the Hilzebruch genus is
\emph{rigid}, depending only on the Betti numbers of $M$.  

\item[(ii)]  Although a Hamiltonian $S^1$-space may admit more than one toric 1-skeleton, the sum of the integrals of $c_1$
on the corresponding spheres is independent of the toric 1-skeleton chosen, and so it is an invariant of $(M,\omega,\psi)$. 
\end{itemize}
\end{rmk}

\begin{corollary}\label{cor 1}
Under the same hypotheses of Theorem \ref{A}, as special cases we have:
\begin{itemize}
\item[$\bullet$] If $n=2$ then 
\begin{equation}\label{n=2 sym}
\sum_{S^2_e\in \mathcal{S}}c_1[S^2_e]+\chi(M)= 12\,.
\end{equation}
\item[$\bullet$] If $n=3$ then
\begin{equation}\label{n=3 sym}
\sum_{S^2_e\in \mathcal{S}}c_1[S^2_e]= 24\,.
\end{equation}
\end{itemize}
\end{corollary}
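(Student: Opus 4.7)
The plan is to simply specialize the general formula \eqref{intro:formula} from Theorem~\ref{A} to the low-dimensional cases $n=2$ and $n=3$, using the explicit expressions for $C(n,\mathbf{b})$ given in \eqref{def C(n,b)}. Since a Hamiltonian $S^1$-space has vanishing odd Betti numbers (the moment map, or any of its generic components, is a perfect Morse function with only even indices) we have $\chi(M)=\sum_{j=0}^n b_{2j}(M)$, and Poincar\'e duality gives $b_{2j}(M)=b_{2(n-j)}(M)$; in particular $b_0(M)=b_{2n}(M)=1$.

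For $n=2$, the even-$n$ branch of \eqref{def C(n,b)} collapses to a single term:
\begin{equation*}
C(2,\mathbf{b}) \;=\; 12\,k^2 b_{0}(M)\big|_{k=1} - \tfrac{2}{2}\chi(M) \;=\; 12\,b_0(M)-\chi(M) \;=\; 12-\chi(M),
\end{equation*}
so Theorem~\ref{A} gives $\sum_{e\in E}c_1[S^2_e]=12-\chi(M)$, which rearranges to \eqref{n=2 sym}.

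For $n=3$, the odd-$n$ branch of \eqref{def C(n,b)} also collapses to a single term (with the factor $\frac{n-3}{2}=0$ killing the Euler-characteristic contribution):
\begin{equation*}
C(3,\mathbf{b}) \;=\; 12\,k(k+1)\,b_{0}(M)\big|_{k=1} - 0 \;=\; 12\cdot 2\cdot b_0(M) \;=\; 24,
\end{equation*}
and Theorem~\ref{A} yields \eqref{n=3 sym}.

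There is no real obstacle here; the only thing to verify is the bookkeeping in plugging $n=2,3$ into \eqref{def C(n,b)} and recognizing that $b_0(M)=1$. The content of the statement is entirely carried by Theorem~\ref{A}, and this corollary serves to exhibit that the classical $12$ and $24$ identities (compare Corollary~\ref{cor combinatorics}) are the low-dimensional shadows of the general formula, now in the much broader setting of Hamiltonian $S^1$-spaces admitting a toric $1$-skeleton rather than only Delzant reflexive polytopes.
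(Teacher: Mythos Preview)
Your proof is correct and is exactly the intended argument: the paper states this corollary without a separate proof, as it follows immediately from Theorem~\ref{A} by plugging $n=2$ and $n=3$ into the definition~\eqref{def C(n,b)} of $C(n,\mathbf{b})$, together with $b_0(M)=1$. Your bookkeeping is accurate in both cases.
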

We observe the following important facts:
\begin{rmk}
$\;$\vspace{0.1cm}
\begin{enumerate}
\item[(i)] Equation \eqref{n=2 sym} can be regarded as a generalization of \emph{Noether's formula} to Hamiltonian $S^1$-spaces of (real) dimension $4$,
which, by Lemma \ref{s1 one skeleton} (iii), always admit a toric 1-skeleton.
\item[(ii)] Corollary \ref{cor 1} may be regarded as a symplectic analogue of the ``12" and ``24" theorem for reflexive polytopes (see Corollary \ref{cor combinatorics}).
\end{enumerate}
\end{rmk}

We can now provide the third more general proof of Theorem~\ref{main combinatorics}.

\begin{proof}[Proof of Theorem~\ref{main combinatorics}]\emph{(Symplectic)}\label{proof symplectic tools} The proof is identical to the symplectic toric proof of Theorem~\ref{main combinatorics} in page~\pageref{proof symplectic}. One just has to replace Theorem~\ref{formula c1} with Theorem~\ref{A}.
\end{proof}

\begin{rmk}\label{symplectic tool}
The expression  involving  the Hirzebruch genus in \eqref{eq: hirzebruch} is the key tool to prove \eqref{sum volumes}.
\end{rmk}

\section{Consequences in symplectic geometry and combinatorics}
\subsection{Monotone Hamiltonian spaces: indices and Betti numbers}\label{mhs}
\begin{defin}\label{monotone} A {\bf monotone Hamiltonian $S^1$-space} is a
 Hamiltonian $S^1$-space $(M,\omega,\psi)$ with $c_1=r[\omega]$, for some $r\in\R$.
\end{defin}  
\begin{lemma}\label{lambda positive}
Let $(M,\omega,\psi)$ be a monotone Hamiltonian $S^1$-space. Then $c_1=r[\omega]$, with $r>0$.
\end{lemma}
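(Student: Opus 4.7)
The plan is to mimic the equivariant cohomology argument of Proposition~\ref{equivalent2}, comparing two equivariant extensions of $c_1=r[\omega]$ and then restricting to the extrema of $\psi$.

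Since both $c_1^{S^1}$ and $r[\omega-\psi]$ are equivariant extensions of the ordinary class $c_1=r[\omega]\in H^2(M;\R)$, their difference lies in the kernel of the restriction map $H^2_{S^1}(M;\R)\to H^2(M;\R)$. As recalled in the proof of Proposition~\ref{equivalent2}, this kernel is the ideal generated by $\mathbb{S}(\mathrm{Lie}(S^1)^*)$; in degree two it is the line $\R\cdot x$ spanned by the generator $x$ of $H^2(BS^1;\R)$. Consequently
\[
c_1^{S^1}=r[\omega-\psi]+c\,x \qquad\text{for some } c\in\R .
\]
Restricting to any fixed point $p\in M^{S^1}$ and using the identities $c_1^{S^1}(p)=\sum_{j=1}^n w_j(p)$ and $[\omega-\psi](p)=-\psi(p)$ (recalled in Sect.~\ref{hts}) yields the scalar equation
\[
\sum_{j=1}^n w_j(p) = -r\,\psi(p) + c.
\]

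Next I would evaluate this identity at a global minimum $p_{\min}$ and a global maximum $p_{\max}$ of $\psi$, both of which exist and must be fixed points by compactness of $M$. A standard local computation in $S^1$-equivariant Darboux coordinates around a fixed point $p$, combined with $d\psi=-\iota_{\xi^\#}\omega$, expresses $\psi-\psi(p)$ as a quadratic form in local complex coordinates $z_1,\dots,z_n$ whose signs are exactly the signs of the weights $w_j(p)$. Hence all weights at $p_{\min}$ are strictly positive and all weights at $p_{\max}$ are strictly negative. Subtracting the scalar identity at $p_{\max}$ from the one at $p_{\min}$ then gives
\[
r\bigl(\psi(p_{\max})-\psi(p_{\min})\bigr) = \sum_{j=1}^n w_j(p_{\min}) - \sum_{j=1}^n w_j(p_{\max}) > 0.
\]
Since the $S^1$-action is effective and has isolated fixed points, $\psi$ is non-constant, so $\psi(p_{\max})>\psi(p_{\min})$, and $r>0$ follows.

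I expect no substantive obstacle: the only delicate point is checking that the sign convention in \eqref{mm} forces positivity of the weights at a minimum of $\psi$ (and negativity at a maximum), rather than the reverse. This is a purely local verification at a single fixed point and is entirely independent of any global structure such as a toric 1-skeleton, so the argument applies to arbitrary monotone Hamiltonian $S^1$-spaces.
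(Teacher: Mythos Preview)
Your proposal is correct and follows essentially the same argument as the paper: both compare the equivariant extensions $c_1^{S^1}$ and $r[\omega-\psi]$, deduce they differ by a constant multiple of $x$, restrict to the extrema of $\psi$, and use that all weights are positive at the minimum and negative at the maximum. Your subtraction at $p_{\min}$ and $p_{\max}$ is exactly the paper's inequality $-r\psi(p_{\min})>-r\psi(p_{\max})$, and your remark about verifying the sign convention in \eqref{mm} is the only point the paper leaves implicit.
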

\begin{proof}
The key ingredient to prove that $r>0$ is the fact that the action is Hamiltonian.   
The first Chern class $c_1$ always admits an equivariant extension $c_1^{S^1}\in H^2_{S^1}(M;\Z)$ and, in this case, since the action is Hamiltonian, so does $[\omega]$: an equivariant extension
is given precisely by  $[\omega-\psi\otimes x]$.
Moreover, by a theorem of Kirwan, the restriction map $H^2_{S^1}(M;\R)\to H^2(M;\R)$ is surjective, and the kernel is the ideal generated by $x$, where $x$ is the generator of the integral equivariant cohomology ring of a point, namely 
$$H^*(BS^1;\Z)=H^*(\C P^\infty;\Z)=\Z[x].$$ 
It follows that $c_1^{S^1}=r[\omega-\psi\otimes x]+a\,x$, for some $a\in \R$.
To conclude that $r>0$ it is sufficient to evaluate the previous expression at the minimum $p_{min}$ and maximum $p_{max}$
of the moment map $\psi$, which are also fixed points of the action. Our conventions imply that all the weights at the minimum (resp.\ maximum) are positive (resp.\ negative). Moreover, $c_1^{S^1}(p)=(\sum_{j}w_j)x$, where $w_1,\ldots,w_n$ are the weights of the $S^1$-action at $p$. Hence we have
$$
-r\, \psi(p_{min})=\frac{c_1^{S^1}(p_{min})}{x}> \frac{c_1^{S^1}(p_{max})}{x}=-r\, \psi(p_{max})
$$
and the conclusion follows.
\end{proof}
From Lemma~\ref{lambda positive}, for monotone Hamiltonian $S^1$-spaces, the symplectic form can be rescaled so that $c_1=[\omega]$.

Theorem \ref{A} implies the existence of inequalities relating the Betti numbers and the \emph{index} of a monotone Hamiltonian $S^1$-space admitting a toric 1-skeleton.
The concept of index is inspired by the analogue in algebraic geometry, and is defined as follows.
\begin{defin}\label{psindex}
Let $(M,\omega)$ be a compact symplectic manifold, and let $c_1$ be the first Chern class of $(TM,J)$.
 The {\bf index} $k_0$ is the largest integer such that $c_1=k_0 \eta$ for some non-zero element $\eta\in H^2(M;\Z)$, modulo torsion elements. 
\end{defin}  

Since a Hamiltonian $S^1$-space (which has isolated fixed points) is simply connected (see \cite{Li2}), the index coincides with the \emph{minimal Chern number}, 
i.e.\ the integer $N$ such that $\langle c_1,\pi_2(M)\rangle = N \Z$ (see \cite[Remark 3.13]{Sa}).
Moreover, it satisfies 
\begin{equation}\label{bound k0}
1\leq k_0 \leq n+1
\end{equation}
(see \cite[Corollary 1.3]{Sa}). 
For monotone  toric manifolds the index can be easily recovered from the image of the moment map.
\begin{prop}\label{index gcd}
Let $(M,\omega,\psi)$ be a monotone toric manifold with moment polytope $\Delta$ and $c_1=[\omega]$. Then
\begin{equation}\label{gcd}
k_0=\operatorname{gcd}\{l(e)\mid e\in \Delta[1]\}\,.
\end{equation}
\end{prop}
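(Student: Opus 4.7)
My plan is to reduce the computation of $k_0$ to a statement about the image of $c_1$ as a homomorphism $H_2(M;\Z)\to\Z$, and then to identify this image using the spheres $S^2_e$ of the toric $1$-skeleton. First, because $(M,\omega,\psi)$ is a Hamiltonian $S^1$-space, Kirwan's results imply $M$ is simply connected and $H^2(M;\Z)$ is torsion-free; by the Hurewicz theorem together with the identification $H^2(M;\Z)\cong\operatorname{Hom}(H_2(M;\Z),\Z)$, the index $k_0$ equals the minimal Chern number $N$, i.e.\ the positive generator of the image of the homomorphism $c_1\colon H_2(M;\Z)\to\Z$. (This identification is already noted just after \eqref{bound k0} in the text.)

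Next I would evaluate $c_1$ on the classes coming from the toric $1$-skeleton. Using $c_1=[\omega]$ and Lemma \ref{volume}, for every edge $e\in\Delta[1]$ one has
\[
c_1[S^2_e] \;=\; \int_{S^2_e}\omega \;=\; \operatorname{Vol}_\omega(S^2_e) \;=\; l(e).
\]
Consequently $l(e)$ lies in the image of $c_1\colon H_2(M;\Z)\to\Z$ for every $e\in\Delta[1]$, which already gives the divisibility $k_0\mid\gcd\{l(e)\mid e\in\Delta[1]\}$.

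For the reverse divisibility I need the fact that the classes $\{[S^2_e]\}_{e\in\Delta[1]}$ span $H_2(M;\Z)$. The cleanest way to see this is via Morse theory on $\Delta$: pick a generic $\xi\in\R^n$ and direct the edges of $\Delta$ by $\xi$ as in Lemma \ref{hequiv}. Then $\varphi=\langle\psi,\xi\rangle$ is a perfect Morse function whose descending cells of (real) index~$2$ are precisely the spheres $S^2_e$ attached to the vertices with exactly one incoming edge, yielding a CW decomposition of $M$ whose $2$-skeleton is covered by a subset $E'\subseteq\Delta[1]$ of the skeleton spheres. The classes $\{[S^2_e]\}_{e\in E'}$ therefore freely generate $H_2(M;\Z)$, so the image of $c_1$ is generated by $\{l(e):e\in E'\}$ and in particular divides every such $l(e)$; a fortiori it divides $\gcd\{l(e):e\in\Delta[1]\}$.

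Combining the two divisibilities gives $k_0=\gcd\{l(e)\mid e\in\Delta[1]\}$. The only step that requires actual work is justifying that the $[S^2_e]$ generate $H_2(M;\Z)$; the main obstacle is simply citing (or sketching) the Morse-theoretic/Bia\l ynicki-Birula cell decomposition correctly. Everything else is a direct substitution using Lemma \ref{volume} and the simply-connected identification $k_0=N$.
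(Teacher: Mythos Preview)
Your approach is the same as the paper's: both reduce to showing the two divisibilities, with the key input that the toric-skeleton spheres generate $H_2(M;\Z)$ (the paper simply asserts this; you add a Morse/Bia\l ynicki--Birula sketch, which is a welcome elaboration).

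One slip to fix: your ``reverse divisibility'' sentence is backwards. From ``the image of $c_1$ is generated by $\{l(e):e\in E'\}$'' you correctly get $k_0=\gcd\{l(e):e\in E'\}$; the conclusion you want is that, since $E'\subseteq\Delta[1]$, the gcd over the larger set divides the gcd over the subset, i.e.\ $\gcd\{l(e):e\in\Delta[1]\}\mid\gcd\{l(e):e\in E'\}=k_0$. Your phrase ``a fortiori it divides $\gcd\{l(e):e\in\Delta[1]\}$'' instead reasserts $k_0\mid\gcd\{l(e):e\in\Delta[1]\}$, which is the first direction again. Reverse that clause and the proof is complete.
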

\begin{proof}
From the definition of $k_0$ it is clear that $l(e)$ is a multiple of $k_0$ for all $e\in \Delta[1]$. Moreover,
since the spheres in the toric 1-skeleton generate $H_2(M;\Z)$, we have $\operatorname{gcd}\displaystyle\{l(e)/k_0\}_{e\in \Delta[1]}=1$, and the conclusion follows.
\end{proof}
 
 It is natural to ask whether there exists a relation between the index and the Betti numbers of $(M,\omega)$. This question is inspired by the long-standing \emph{Mukai conjecture} for Fano varieties \cite{Muk} and its generalizations, and has been extensively studied in the algebraic geometric setting (see for instance \cite{BCDD, Andreatta, C, CJR}).
In the next corollary we prove that when $(M,\omega,\psi)$ is a monotone Hamiltonian $S^1$-space admitting a toric 1-skeleton, there are inequalities relating the index and the Betti numbers. As one may expect, such inequalities imply stronger restrictions
when the index is high.

\begin{corollary}\label{cor 2}
Let $(M,\omega)$ be a compact, connected symplectic manifold of dimension $2n$ with index $k_0$, and let $\mathbf{b}=(b_0,\ldots,b_{2n})$ be the
vector of its even Betti numbers. 
Consider the integer $C(k_0,n,\mathbf{b})$ defined as 
$$
C(k_0,n,\mathbf{b}):=
\begin{cases}
\displaystyle\sum_{k=1}^{\frac{n}{2}}\Big[12k^2-n(k_0+1)\Big]b_{n-2k}- \frac{n}{2}(k_0+1)\,b_n, \mbox{ for }n\mbox{ even}\\
& \\
\displaystyle\sum_{k=1}^{\frac{n-1}{2}}\Big[12k(k+1)+3-n(k_0+1)\Big]b_{n-1-2k}- \big[n(k_0+1)-3\big]b_{n-1}, \mbox{ for }n\mbox{ odd.}
\end{cases}
$$
If $(M,\omega,\psi)$ is a monotone Hamiltonian $S^1$-space which admits a toric 1-skeleton $\mathcal{S}$, then 
$C(k_0,n,\mathbf{b})$ is a \emph{non-negative} multiple of $k_0$. 

Moreover $C(k_0,n,\mathbf{b})$ vanishes if and only if $c_1[S_e^2]=k_0$ for all $S_e^2\in \mathcal{S}$.
\end{corollary}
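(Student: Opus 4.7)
The plan is to combine Theorem \ref{A}, which identifies $\sum_{e \in E} c_1[S_e^2]$ with $C(n, \mathbf{b})$, with the lower bound $c_1[S_e^2] \geq k_0$ that holds on each sphere of the toric $1$-skeleton of a monotone Hamiltonian $S^1$-space. The quantity $C(k_0, n, \mathbf{b})$ should emerge as the ``surplus'' $C(n, \mathbf{b}) - k_0|E|$ once $\chi(M)$ is expanded in Betti numbers via Poincar\'e duality.

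First I would establish the per-sphere inequality. By Definition \ref{psindex}, $c_1 = k_0 \eta$ for some $\eta \in H^2(M;\Z)$ modulo torsion, so $c_1[S_e^2]$ is an integer multiple of $k_0$. Monotonicity together with Lemma \ref{lambda positive} gives $c_1 = r[\omega]$ with $r>0$, and since $S_e^2$ is a symplectic sphere, $[\omega][S_e^2] > 0$. Hence $c_1[S_e^2] > 0$, and therefore $c_1[S_e^2] \geq k_0$ for every $S_e^2 \in \mathcal{S}$, with each $c_1[S_e^2]-k_0$ a non-negative multiple of $k_0$.

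Next I would use the fact that the toric $1$-skeleton has $|E| = \frac{n}{2}\chi(M)$ spheres (recorded after Definition \ref{smooth 1-skeleton}). Combining this with Theorem \ref{A},
\[
C(n, \mathbf{b}) - \frac{n k_0}{2}\chi(M) \;=\; \sum_{e \in E}\bigl(c_1[S_e^2] - k_0\bigr) \;\geq\; 0,
\]
and the right-hand side is manifestly a non-negative multiple of $k_0$ that vanishes precisely when $c_1[S_e^2] = k_0$ for every $S_e^2 \in \mathcal{S}$. This already delivers both conclusions of the corollary, provided one knows that the left-hand side equals $C(k_0,n,\mathbf{b})$.

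The remaining step is the identification of the left-hand side with $C(k_0, n, \mathbf{b})$. For this I would substitute the Poincar\'e duality formulas $\chi(M) = b_n + 2\sum_{k=1}^{n/2} b_{n-2k}$ (for $n$ even) and $\chi(M) = 2\sum_{k=0}^{(n-1)/2} b_{n-1-2k}$ (for $n$ odd) into the expression \eqref{def C(n,b)} for $C(n,\mathbf{b})$, distribute the $-\frac{n(k_0+1)}{2}\chi(M)$ term across the Betti numbers, and collect like terms. The coefficients $12k^2 - n(k_0+1)$ and $12k(k+1) + 3 - n(k_0+1)$ then drop out, with the unpaired middle-degree contributions $-\frac{n}{2}(k_0+1)b_n$ and $-[n(k_0+1)-3]b_{n-1}$ appearing as separate summands. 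This is essentially the same bookkeeping already carried out in the proof of Theorem \ref{A}; the only thing to be careful about is the index shift in the odd-dimensional case (where the sum over $k$ in \eqref{def C(n,b)} effectively starts at $k=1$ but $\chi(M)$ includes the $k=0$ term $b_{n-1}$). I do not anticipate any serious obstacle beyond keeping these indices straight.
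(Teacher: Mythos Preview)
Your proposal is correct and follows essentially the same approach as the paper: establish that each $c_1[S_e^2]$ is a positive multiple of $k_0$ (via monotonicity and the definition of index), form the surplus $\sum_e c_1[S_e^2]-k_0|E|=C(n,\mathbf{b})-\frac{n k_0}{2}\chi(M)$ using Theorem~\ref{A} and $|E|=\frac{n}{2}\chi(M)$, and then rewrite this using the Poincar\'e-duality expansion of $\chi(M)$ to recover $C(k_0,n,\mathbf{b})$. The paper's proof is organized identically and leaves the odd case to the reader, just as you propose.
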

We remark that, thanks to Lemma \ref{s1 one skeleton}, the corollary above applies to all monotone toric manifolds (or more generally to all monotone GKM spaces).
\begin{proof}
Let $\mathcal{S}=\{S^2_e\}_{e\in E}$ be the toric 1-skeleton associated to a suitable multigraph $\Gamma=(V,E)$. 
Observe that, for each $e\in E$, the integral of $c_1$ on $S^2_e$ is an integer. Moreover, since the symplectic form can be taken so that $c_1=[\omega]$, and since each of these spheres is symplectic, this integral must be a positive integer. Finally, since the index is $k_0$, this integer $c_1[S_e^2]$ must be a positive multiple of $k_0$.
Thus 
\begin{equation}\label{eq mon}
\sum_{e\in E}c_1[S^2_e] - k_0|E|=\sum_{e\in E}c_1[S^2_e]-k_0\frac{n}{2}\chi(M)\,
\end{equation}
is a non-negative multiple of $k_0$, and it is zero precisely if $c_1[S_e^2]=k_0$ for all $S_e^2\in \mathcal{S}$. 

For $n$ even, Theorem \ref{A} implies that the right-hand side of \eqref{eq mon} is
$$
12\displaystyle\sum_{k=1}^{\frac{n}{2}}\Big[ k^2b_{n-2k}(M)\Big] -\frac{n}{2}(k_0+1)\chi(M)
$$
and the claim follows easily from the definition of $C(k_0,n,\mathbf{b})$, and the fact that, for $n$ even, 
$$\chi(M)=b_n(M)+2\displaystyle\sum_{k=1}^{\frac{n}{2}}b_{n-2k}(M).$$
The case $n$ odd is similar, and the details are left to the reader.
\end{proof}

\renewcommand{\arraystretch}{1.5}
\begin{table}[h]
\begin{center}
  \begin{tabular}{ | c || l | l | l | l | }
    \hline
    $k_0$ & $n=2$ & $n=3$ & $n=4$ & $n=5$ \\ \hline \hline
    $1$ & \cellcolor{Apricot}$2(4-b_2)$ & $\cellcolor{Apricot}3(7-b_2)$ & $4(10+b_2-b_4)$ & $65+17b_2-7b_4$ \\ \hline
    $2$ & \cellcolor{SpringGreen}$3(2-b_2)$ & $\cellcolor{Apricot}6(3-b_2)$ & $6(6-b_4)$ & $2(25+6b_2-6b_4)$\\ \hline
    $3$ &  \cellcolor{SpringGreen}$4(1 - b_2)$ &  \cellcolor{SpringGreen}$3(5-3b_2)$ & $\cellcolor{Apricot}4(8-b_2-2b_4)$ & $55+7b_2-17b_4$\\ \hline
   $4$ &  & $\cellcolor{SpringGreen}12(1-b_2)$ & $\cellcolor{SpringGreen}2(14-4b_2-5b_4)$ & $2(25+b_2-11b_4)$\\ \hline
  $5$ & & & $\cellcolor{SpringGreen}12(2-b_2-b_4)$ & $\cellcolor{Apricot}3(15-b_2-9b_4)$\\ \hline
   $6$ & & & & $\cellcolor{SpringGreen}8(5-b_2-4b_4)$\\ 
    \hline
  \end{tabular}
 \end{center}
  \caption{
  Values of $C(k_0,n,\mathbf{b})$ for $2\leq n\leq 5$.
 The colored cells correspond to values of $n$ and $k_0$ where the only positive coefficient
in $C(k_0,n,\mathbf{b})$ is that of $b_0=1$.}
\label{table}

\end{table}

In Table \ref{table} we write the values of $C(k_0,n,\mathbf{b})$ for all $2\leq n \leq 5$ and $1\leq k_0 \leq n+1$
in terms of $\mathbf{b}$, where we used that $b_0=1$.
In the green cells we list the cases where, as a consequence of Corollary \ref{cor 2}, the Betti numbers are
completely determined by $n$ and $k_0$. In the orange cells we list the cases where there are only finitely many possibilities for
the vector of Betti numbers. It is easy to check that Corollary \ref{cor 2} gives the restrictions for $\mathbf{b}$ in Table \ref{table2}. (Note
that the odd Betti numbers vanish and that, by Poincar\'e duality, it is only necessary to compute the Betti numbers in Table~\ref{table2}.)

\renewcommand{\arraystretch}{1.5}
\begin{table}[h]
\begin{center}
  \begin{tabular}{ | c || l | l | l | l | }
    \hline
    $k_0$ & $n=2$ & $n=3$ & $n=4$ & $n=5$ \\
    &$ b_2$ & $b_2$ & $(b_2,b_4)$ & $(b_2,b_4)$ \\ \hline \hline
    $1$ & \cellcolor{Apricot}$b_2\leq 4$ & \cellcolor{Apricot}$b_2\leq 7$ &  &  \\ \hline
    $2$ & \cellcolor{SpringGreen}$2$ & $\cellcolor{Apricot}b_2\leq 3$ &  & \\ \hline
    $3$ &  \cellcolor{SpringGreen}$1$ &   \cellcolor{SpringGreen}$1$ & $\cellcolor{Apricot}(1,2),(2,3),(3,1),(4,2),(6,1)$ & \\ \hline
   $4$ &  & \cellcolor{SpringGreen}$1$ & \cellcolor{SpringGreen}$(1,2)$ & \\ \hline
  $5$ & & & \cellcolor{SpringGreen}$(1,1)$ & \cellcolor{Apricot}$(1,1),(6,1)$\\ \hline
   $6$ & & & & \cellcolor{SpringGreen}$(1,1)$\\ 
    \hline
  \end{tabular}
\end{center}
 \caption{List of allowed values of $b_2$ and $b_4$ for $2\leq n\leq 5$.
}
  \label{table2}
\end{table}

\begin{rmk}\label{sharp}
For $n=2$ there are (infinitely many non equivariantly symplectomorphic) examples of monotone Hamiltonian $S^1$-spaces with $b_2(M)=1,2,3,$ and $4$, so the corresponding conditions in Table \ref{table2} are sharp (however $k_0=1$ implies $b_2\geq 2$). They can be obtained from the monotone toric manifolds associated with the polytopes in Figure \ref{fig:smooth}, by restricting the $\T^2$
action to different circle subgroups.
\end{rmk}

An immediate consequence of Corollary \ref{cor 2} is the following:
\begin{corollary}\label{cor 3.1}
Assume that the hypotheses of Corollary \ref{cor 2} hold.
Fixing $n$ and $k_0$, if none of the coefficients of $C(k_0,n,\mathbf{b})$ vanishes, then the Betti numbers with positive coefficients in $C(k_0,n,\mathbf{b})$
determine a finite number of possibilities for the remaining ones.
In particular, if $n\leq 5$, then
\begin{equation}\label{min euler}
k_0=n+1 \implies \chi(M)=n+1\,.
\end{equation}
\end{corollary}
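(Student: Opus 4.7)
The approach is to exploit directly the non-negativity statement of Corollary~\ref{cor 2}, namely that $C(k_0,n,\mathbf{b})\geq 0$, together with the explicit form of $C(k_0,n,\mathbf{b})$ as a linear combination of the Betti numbers $b_{2i}$. Writing
\[
C(k_0,n,\mathbf{b})=\sum_{i}c_i\,b_{2i},
\]
where the $c_i\in\Z$ depend only on $n$ and $k_0$, and splitting the index set according to the sign of $c_i$, the inequality $C(k_0,n,\mathbf{b})\geq 0$ is equivalent to
\[
\sum_{c_i<0}|c_i|\,b_{2i}\;\leq\;\sum_{c_i>0}c_i\,b_{2i}.
\]
If none of the $c_i$ vanishes, then fixing the Betti numbers with $c_i>0$ bounds the left-hand side above by a fixed constant. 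Since each $b_{2i}$ is a non-negative integer and each $|c_i|$ is a positive integer, only finitely many tuples of the remaining Betti numbers can satisfy the bound, which is exactly the first assertion.

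For the ``in particular'' statement, the plan is to plug $k_0=n+1$ into the formula for $C$ (equivalently, to read off the corresponding entries of Table~\ref{table}) and use two universal constraints on the Betti numbers of $(M,\omega)$: first, because $(M,\omega)$ is a $2n$-dimensional closed symplectic manifold, $[\omega]^j\neq 0\in H^{2j}(M;\R)$ for every $0\leq j\leq n$, so $b_{2j}(M)\geq 1$; second, because $\psi$ is a perfect Morse function, all odd Betti numbers vanish and Poincaré duality gives $b_{2j}=b_{2n-2j}$. Inspecting Table~\ref{table} row by row for $n=2,3,4,5$ with $k_0=n+1$, the inequality $C(n+1,n,\mathbf{b})\geq 0$ reduces respectively to $b_2\leq 1$, $b_2\leq 1$, $b_2+b_4\leq 2$, and $b_2+4b_4\leq 5$. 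Combined with $b_2,b_4\geq 1$, each of these forces all the ``middle'' Betti numbers to equal $1$, and Poincaré duality determines the remaining ones; in every case $\chi(M)=\sum_j b_{2j}=n+1$.

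There is no real technical obstacle, since everything reduces to arithmetic once one has Corollary~\ref{cor 2} and the bound $b_{2j}\geq 1$ coming from the symplectic class. The only conceptual point worth flagging is the role of the hypothesis $n\leq 5$: for these dimensions the only positive coefficient in $C(n+1,n,\mathbf{b})$ is that of $b_0=1$, so the inequality becomes an upper bound on a positive linear combination of the $b_{2j}$ with $j\geq 1$, and one can conclude. This is precisely where the hypothesis ``none of the coefficients vanishes'' in the first part of the corollary is compatible with $k_0=n+1$; as soon as $n\geq 6$ one of the relevant coefficients starts to vanish (for instance the coefficient of $b_2$ in $C(7,6,\mathbf{b})$), and the same elementary argument no longer pins down $\chi(M)$.
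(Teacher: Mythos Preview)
Your proof is correct and follows essentially the same route the paper has in mind: the paper states Corollary~\ref{cor 3.1} as ``an immediate consequence of Corollary~\ref{cor 2}'' without giving a separate argument, and your write-up is precisely the spelling-out of that immediate consequence via Table~\ref{table}. Your explicit justification of the bound $b_{2j}\geq 1$ from non-vanishing of $[\omega]^j$, and your observation that the coefficient of $b_2$ in $C(7,6,\mathbf{b})$ vanishes (explaining the restriction $n\leq 5$), are helpful details the paper leaves implicit.
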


For higher values of $n$, if one also assumes \emph{unimodality} of the vector ${\bf b}=(b_0,\ldots,b_{2n})$ of even Betti numbers, meaning that
$b_{2i}\leq b_{2i+2}$ for all $i\leq \frac{n}{4}$, we can conclude that, for all $k_0\geq n-2$, there are only finitely many possibilities for $\mathbf{b}$.
Note that assuming unimodality of $\mathbf{b}$ is not very restrictive. Indeed, in \cite[Sect.\ 4.2]{Jef}, Tolman asked whether the sequence of
even Betti numbers of a Hamitonian $S^1$-space is unimodal. Since then there has been much work trying to answer this question \cite{Luo,Cho,CK}.
In particular, it is known that unimodality holds whenever the moment map associated with the action is index increasing \cite[Thm.\ 1.2]{Cho}.

\begin{corollary}\label{cor 3}
Let $(M,\omega,\psi)$ be a monotone Hamiltonian $S^1$-space of dimension $2n$ with index $k_0$, and assume it admits a toric 1-skeleton.
Suppose that the vector ${\bf b}=(b_0,\ldots,b_{2n})$ of even Betti numbers is unimodal. Then
\begin{enumerate}
\item If $k_0=n+1$, then $b_{2j}=1$ for all $j=0,\ldots,n$.
\vspace{0.2cm}
\item If $k_0=n$, then
\begin{itemize}
\item[(a)] if $n$ is \emph{odd}, then $b_{2j}=1$ for all $j=0,\ldots,n$;
\item[(b)] if $n$ is \emph{even}, then $b_{2j}=1$ for all $j\in \{0,\ldots,n\}\setminus \{\frac{n}{2}\}$ and $b_n=2$.
\end{itemize}
\vspace{0.2cm}
\item If $k_0=n-1$ and $n\geq 2$, then
 $$b_{2j}=b_{2(n-j)}\leq 2+\left\lfloor \frac{2}{n-1}\right\rfloor \quad\text{for all}\quad  0\leq j \leq \lambda,$$
 where $\lambda=\frac{n-1}{2}-\left\lfloor \frac{n\sqrt{3}-3}{6}\right\rfloor$ if $n$ is \emph{odd}, and $\lambda=\frac{n}{2}-\left\lfloor \frac{n\sqrt{3}}{6}\right\rfloor$ if $n$ is \emph{even}. 
 \\Moreover
there are finitely many possibilities for the other Betti numbers.
\vspace{0.2cm}
\item If $k_0=n-2$ and $n\geq 3$, then

$$b_{2j}=b_{2(n-j)}\leq 4+\left\lfloor \frac{6}{n-1}\right\rfloor \quad\text{for all}\quad  0\leq j \leq \lambda,$$
where $\lambda=\frac{n-1}{2}-\left\lfloor \frac{\sqrt{3n(n-1)}-3}{6}\right\rfloor$ if $n$ is \emph{odd}, and $\lambda= \frac{n}{2}-\left\lfloor \frac{\sqrt{3n(n-1)}}{6}\right\rfloor$ if $n$ is \emph{even}.
Moreover there are finitely many possibilities for the other Betti numbers.

\end{enumerate}

\end{corollary}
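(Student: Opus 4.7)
The plan is to combine Corollary \ref{cor 2} with the unimodality of $\mathbf{b}$ via a summation-by-parts (Abel) argument. Set $B_k=b_{n-2k}$ if $n$ is even and $B_k=b_{n-1-2k}$ if $n$ is odd; by unimodality and Poincar\'e duality, $B_0\geq B_1\geq\cdots \geq B_{\lfloor n/2\rfloor}=1$, so parametrize by the non-negative differences $D_j:=B_j-B_{j+1}$. Substituting $B_k=1+\sum_{j\geq k}D_j$ into $C(k_0,n,\mathbf{b})\geq 0$ and collecting terms in the $D_j$'s gives
\[
L(k_0,n)\;\geq\;\alpha(k_0,n)\,D_0+\sum_{j=1}^{\lfloor n/2\rfloor-1}\Phi_j(k_0,n)\,D_j,
\]
where $L(k_0,n)=\tfrac{n(n+1)(n+1-k_0)}{2}$ (the value of $C$ at the all-ones Betti vector) and $\alpha,\Phi_j$ have explicit closed forms obtained from $\sum k^2$ (respectively $\sum k(k+1)$). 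A direct computation shows $\Phi_j(k_0,n)\geq 0$ throughout the admissible range whenever $k_0\geq n-2$, with minimum attained at $j=\lfloor n/2\rfloor-1$ and value of order $n^2$.

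For $k_0=n+1$, the miracle is that $L=0$ while every $\alpha,\Phi_j>0$; hence $D_j=0$ for all $j$, yielding $b_{2j}=1$ for all $j$. For $k_0=n$, $L=\tfrac{n(n+1)}{2}$ is small compared with each $\Phi_j$, so $D_j=0$ for $j\geq 1$ and $D_0\in\{0,1\}$. For $n$ odd, $D_0\leq L/\alpha<1$ forces $D_0=0$ and hence all $b_{2j}=1$. For $n$ even, both $D_0=0$ and $D_0=1$ respect the inequality, but the divisibility clause of Corollary \ref{cor 2} rules out $D_0=0$ (it would give $C=\tfrac{n(n+1)}{2}$, not divisible by $n$ when $n$ is even), leaving $D_0=1$, i.e.\ $b_n=2$ with all other $b_{2j}=1$.

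For $k_0=n-1$ and $k_0=n-2$, the minimum of $\Phi_j$ on the admissible range is $\Phi_{\lfloor n/2\rfloor-1}$, a quantity of order $n^2$; the threshold $k^*$ below which the partial tail sums $\Phi_j$ become too small to control a sequence of positive $D_j$'s corresponds to the sign-change locus of the original coefficient $c_k=12k^2-n(k_0+1)$ (or its odd-$n$ analogue), i.e.\ $k^*=\lfloor n\sqrt{3}/6\rfloor$ for $k_0=n-1$ and $k^*=\lfloor\sqrt{3n(n-1)}/6\rfloor$ for $k_0=n-2$, which matches $\lambda=\lfloor n/2\rfloor-k^*$. For $j\leq\lambda$, writing $b_{2j}=1+\sum_{\ell\geq\lfloor n/2\rfloor-j}D_\ell$ and applying the Abel inequality gives
\[
b_{2j}-1\;\leq\;\frac{L(k_0,n)}{\Phi_{\lfloor n/2\rfloor-1}(k_0,n)},
\]
which evaluates to $1+\tfrac{2}{n-1}$ for $k_0=n-1$ and $3+\tfrac{6}{n-1}$ for $k_0=n-2$; rounding to integers produces the claimed bounds $2+\lfloor 2/(n-1)\rfloor$ and $4+\lfloor 6/(n-1)\rfloor$. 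For $j>\lambda$, the sums $\sum\Phi_\ell D_\ell$ are still bounded by $L$, so the remaining Betti numbers take only finitely many values.

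The main obstacle is the explicit computation of $\alpha(k_0,n)$ and $\Phi_j(k_0,n)$ in closed form for both parities of $n$, and the verification that $L/\Phi_{\lfloor n/2\rfloor-1}$ rounds exactly to the claimed floor-function integer. The small-$n$ edge cases (in particular $n=2$ and $n=3$) must also be checked separately, since the formulas for $L$, $\alpha$, and the relevant $\Phi_j$ can degenerate and the resulting bounds require direct verification.
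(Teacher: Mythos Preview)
Your Abel-summation reparametrization is correct and yields the same conclusions as the paper, but the route is genuinely different. The paper fixes $\lambda$ as the first index with a negative coefficient $A_\lambda$ and uses the monotonicity of $b_{2i}$ directly to obtain the single inequality $0\le C\le A_0+S\,b_{2\lambda}$ (where $S=\sum_{i\ge 1}A_i=\tfrac12 n(n-1)(n-k_0-3)$); it then pins down the remaining Betti numbers by an induction, repeatedly substituting the already-determined values back into $C\ge 0$. Your change of variables $D_j=B_j-B_{j+1}$ linearizes the unimodality constraint into $D_j\ge 0$ and replaces the induction by a single linear inequality $\sum_j\Phi_jD_j\le L$ with \emph{all} $\Phi_j>0$ when $k_0\ge n-2$; this is cleaner and handles parts (1)--(4) uniformly. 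Both approaches reduce to the same closed forms $L=\tfrac12 n(n+1)(n+1-k_0)$ and $\Phi_{m-1}=-S=\tfrac12 n(n-1)(k_0-n+3)$, so your ratio $L/\Phi_{m-1}$ reproduces exactly the paper's bound $-A_0/S$ (indeed $A_0=L-S$).

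One point deserves sharpening: your assertion that ``the minimum of $\Phi_j$ on the admissible range is $\Phi_{m-1}$'' is false on the full range $0\le j\le m-1$ (e.g.\ for $k_0=n-1$, $n$ even, one has $\Phi_0=\tfrac{n^2}{2}<\Phi_{m-1}=n(n-1)$ for $n\ge 3$). What makes your bound for $b_{2j}$ with $j\le\lambda$ work is that the \emph{restricted} range $\ell\in[m-j,\,m-1]$ lies on the decreasing side of the unimodal sequence $\Phi_\ell$ (its peak is at $\ell=m-\lambda$ precisely because $A_\lambda<0\le A_{\lambda-1}$), so the minimum on that sub-range is indeed $\Phi_{m-1}$. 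State this explicitly.

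Finally, your divisibility observation for $k_0=n$ with $n$ even is a genuine improvement: the paper's proof only establishes $b_n\in\{1,2\}$, whereas the statement asserts $b_n=2$. Your argument that $D_0=0$ forces $C=L=\tfrac{n(n+1)}{2}$, which is not a multiple of $k_0=n$ for even $n$, correctly eliminates $b_n=1$ via the divisibility clause of Corollary~\ref{cor 2}.
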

\begin{proof} Fixing $n$ and $k_0$ we can see $C(k_0,n,{\bf b})$ as a linear function of ${\bf b}$.
Let $A_i$ be the coefficient of $b_{2i}$ in $C(k_0,n,{\bf b})$ and let $2\lambda$ be the smallest index of the Betti numbers that have a negative coefficient  in $C(k_0,n,{\bf b})$. 
In particular, 
$$
2\lambda=n-2\left\lfloor \sqrt{\frac{n(k_0+1)}{12}} \right\rfloor\text{ if $n$ is even, and }\;2\lambda=n-1-2\left\lfloor -\frac{1}{2}+ \sqrt{\frac{n(k_0+1)}{12}} \right\rfloor\text{ if $n$ is odd}.
$$ 
Moreover, let
$$
S:= \sum_{i=1}^{\lfloor \frac{n}{2} \rfloor} A_i = \frac{1}{2}n(n-1)(n-k_0-3). 
$$
Using the fact that $1=b_0\leq b_2\leq \cdots \leq b_{2\lfloor n/2\rfloor}$, we have from Corollary \ref{cor 2} that
\begin{equation}\label{sum}
0\leq C(k_0,n,{\bf b}) = A_0 + \sum_{i=1}^{\lfloor \frac{n}{2} \rfloor} A_i\, b_{2i} \leq A_0 + S\, b_{2\lambda},
\end{equation}
with $A_0=n(3n - k_0-1)$.
Hence, if $k_0\geq n-2$ we have $S< 0$ and
\begin{equation}\label{bm}
b_{2\lambda}\leq -\frac{A_0}{S}=2\frac{3n-k_0-1}{(n-1)(k_0-n+3)}.
\end{equation}

If $k_0=n+1$ then $b_{2\lambda} \leq 1$ and so, by unimodality of ${\bf b}$, we have $1=b_0=b_2=\ldots=b_{2\lambda}$. We now show by induction that $b_{2j}=1$ for all $\lambda\leq j\leq \lfloor n/2 \rfloor$. Assuming that $b_{2j}=1$, and hence $b_{2i}=1$ for all $i\leq j$, and substituting these values of $b_{2i}$ in \eqref{sum}, we obtain
$$
0\leq C(n+1,n,{\bf b}) = \sum_{i=0}^j A_i + \sum_{i=j+1}^{\lfloor n/2 \rfloor} A_i b_{2i} \leq  \sum_{i=0}^j A_i +  \sum_{i=j+1}^{\lfloor n/2 \rfloor} A_i  b_{2(j+1)},
$$ 
since $A_i<0$ for all $i\geq \lambda$.
Hence,
$$
 b_{2(j+1)} \leq -\frac{ \sum_{i=0}^j A_i }{ \sum_{i=j+1}^{\lfloor n/2 \rfloor} A_i } = \frac{\sum_{i=0}^j A_i }{\sum_{i=0}^j A_i -(S+A_0)} = 1,
$$
implying that $ b_{2(j+1)} =1$. Here we used the fact that, for $k_0=n+1$, we have $S+A_0 = 0$. We conclude that, if $k_0=n+1$, all the even Betti numbers are $1$.

If $k_0=n$ and $n\geq 4$ is even, then by \eqref{bm} we have  $b_{2\lambda} \leq 1$ and so, by unimodality of ${\bf b}$, we have $1=b_0=b_2=\ldots=b_{2\lambda}$. Moreover, assuming  $1=b_{2i}$ for all $i\leq j$ and $\lambda\leq j < n/2 -1 $, substituting these values  in \eqref{sum}, we get
$$
 b_{2(j+1)} \leq 1+\frac{S+A_0}{\sum_{i=0}^j A_i -(S+A_0)} < 1+ \frac{S+A_0}{\sum_{i=0}^{n/2-1} A_i -(S+A_0)} = 2
$$
implying that $ b_{2(j+1)} =1$. Here we used the fact that for $k_0=n$ we have 
$$
\sum_{i=0}^{n/2-1} A_i = 2(S+A_0)= n (n+1)
$$
 and that $j<  n/2  -1$. We conclude that, if $k_0=n$ and $n\geq 4$ is even, all the even Betti numbers up to $b_{n}$ are $1$ and that $b_n$ can be $1$ or $2$. If $n=2$ then $b_0=1$ and $b_2$ can be $1$ or $2$.
 
If $k_0=n$ and $n\geq 3$ is odd then $b_{2\lambda} \leq 1$ and so, by unimodality of ${\bf b}$, we have $1=b_0=b_2=\ldots=b_{2\lambda}$. Moreover, assuming  $b_{2i}=1$ for all $i\leq j$ and $\lambda\leq j \leq (n-1)/2 -1  $, substituting these values  in \eqref{sum}, we get
$$
 b_{2(j+1)} \leq 1+\frac{S+A_0}{\sum_{i=0}^j A_i -(S+A_0)} \leq 1+ \frac{S+A_0}{\sum_{i=0}^{(n-1)/2-1} A_i -(S+A_0)}=  \frac{3}{2} \left(1+ \frac{1}{n^2+n-3} \right) <2
$$
implying that $ b_{2(j+1)} =1$. Here we used the fact that for $k_0=n$ we have 
$$
\sum_{i=0}^{(n-1)/2-1} A_i - (S+A_0)= A_{\frac{n-1}{2}}=n^2 +n-3.
$$
We conclude that, if $k_0=n$ and $n\geq 3$ is odd, all the even Betti numbers  are $1$.

If $k_0=n-1$, then from \eqref{bm} we have 
\begin{equation}\label{bh}
b_{2\lambda}\leq 2+\left\lfloor \frac{2}{n-1}\right\rfloor\,,
\end{equation}
implying that for all $n\geq 4$, 
up to $2\lambda$ all the Betti numbers are $1$ or $2$. Then \eqref{sum} gives
$$
0\leq C(n-1,n,{\bf b}) = A_0 + \sum_{i=1}^{\lambda-1} A_i\, b_{2i} +A_\lambda b_{2\lambda}+ \sum_{i=\lambda+1}^{\left\lfloor\frac{n}{2}\right\rfloor} A_i\, b_{2i} \leq A_0 + 2\sum_{i=1}^{\lambda-1} A_i+A_\lambda+ \sum_{i=\lambda+1}^{\left\lfloor\frac{n}{2}\right\rfloor} A_i\, b_{2i},
$$
implying that
$$
\sum_{i=\lambda+1}^{\left\lfloor\frac{n}{2}\right\rfloor} \lvert A_i\rvert\, b_{2i}\leq A_0 + 2\sum_{i=1}^{\lambda-1} A_i+A_\lambda\,,
$$
which gives a finite number of possibilities for the remaining Betti numbers of $M$. From \eqref{bh} we have that, if $n=2$, then $b_2\leq 4$, and if $n=3$ then $b_2\leq 3$.

Repeating the same procedure for $k_0=n-2$, from \eqref{bm} we have $b_{2\lambda}\leq 4 + \lfloor \frac{6}{n-1}\rfloor $,
and all the claims follow similarly. 
\end{proof}

\subsection{Delzant reflexive polytopes: indices, $f$-vectors and $h$-vectors}

\begin{defin}\label{index2}
Let $\Delta$ be a Delzant reflexive polytope of dimension $n$. The {\bf index} $k_0$ of $\Delta$ is defined as
$$
k_0:=\operatorname{gcd}\{l(e)\mid e\in \Delta[1]\}\,.
$$
\end{defin}
From Proposition \ref{index gcd} it is clear that the index of $\Delta$ is the same as the index of the corresponding symplectic toric manifold $(M_\Delta,\omega,\psi)$.
Hence it agrees with the standard notion of index defined in algebraic geometry, and satisfies $1\leq k_0 \leq n+1$ (see \eqref{bound k0}).
Moreover, it can be seen that $k_0$ is the largest integer $l$ such that $\frac{1}{l}\Delta$ is a (Gorenstein) integral polytope (compare with \cite[Sect. 4.2]{Sa}).

Just like in the previous subsection, there is a clear relation between the index and the $f$-vector, or the $h$-vector of $\Delta$.
The following corollary is an immediate consequence of Theorem \ref{main combinatorics}.

\begin{corollary}\label{inequalities h f}
Let $\Delta$ be a Delzant reflexive polytope of dimension $n$ with $f$-vector $\mathbf{f}$ and $h$-vector $\mathbf{h}$. 
Consider the integers 
$$
C(k_0,n,\mathbf{f}):=12f_2+(5-3n-k_0)f_1
$$
and $C(k_0,n,\mathbf{h})$ defined, for $n=2m$ even, as
$$
\displaystyle\sum_{k=1}^{m}\Big[12k^2-2m(k_0+1)\Big]h_{m-k}- m(k_0+1)\,h_m, 
$$
and, for $n=2m+1$ odd, as
$$
\displaystyle\sum_{k=1}^{m}\Big[12k(k+1)+3-(2m+1)(k_0+1)\Big]h_{m-k}- \big[(2m+1)(k_0+1)-3\big]h_{m}. 
$$
Then $C(k_0,n,\mathbf{f})=C(k_0,n,\mathbf{h})$ is a \emph{non-negative} multiple of $k_0$. 
Moreover, these numbers are zero if and only if $l(e)=k_0$ for all $e\in \Delta[1]$.
\end{corollary}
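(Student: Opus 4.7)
The plan is to reduce the corollary to Theorem~\ref{main combinatorics} by an algebraic rearrangement, and then exploit the definition $k_0 = \gcd\{l(e) \mid e\in\Delta[1]\}$. First I would write
\[
C(k_0,n,\mathbf{f}) = \big(12f_2 + (5-3n)f_1\big) - k_0\,f_1,
\]
and apply Theorem~\ref{main combinatorics} to the bracketed term, obtaining
\[
C(k_0,n,\mathbf{f}) = \sum_{e\in\Delta[1]} l(e) - k_0 |\Delta[1]| = \sum_{e\in\Delta[1]} \bigl(l(e) - k_0\bigr).
\]
Since $\Delta$ is reflexive and $k_0 = \gcd\{l(e)\}$, every $l(e)$ is a positive integer multiple of $k_0$, hence each summand $l(e)-k_0$ is a non-negative multiple of $k_0$. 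This simultaneously gives non-negativity, divisibility by $k_0$, and the characterization of the vanishing locus: the sum is zero precisely when every $l(e)$ equals $k_0$.

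It remains to check $C(k_0,n,\mathbf{f}) = C(k_0,n,\mathbf{h})$. The cleanest route is to express $f_1$ in the $h$-basis using $f_k = \sum_{l=k}^{n}\binom{l}{k}h_{n-l}$ together with the Dehn--Sommerville symmetry $h_j = h_{n-j}$ (valid since $\Delta$ is simple). A short calculation gives
\[
f_1 \;=\; \sum_{j=0}^{n-1}(n-j)\,h_j \;=\; \Bigl\lfloor \tfrac{n}{2}\Bigr\rfloor \sum_{k=0}^{n} h_k + \varepsilon_n,
\]
where $\varepsilon_n$ is zero when $n$ is even and is a simple correction term when $n$ is odd (coming from the central pairing). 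Substituting this into $C(n,\mathbf{h}) - k_0 f_1$, where $C(n,\mathbf{h})$ is the expression from \eqref{def C(n,h)} already known to equal $12f_2+(5-3n)f_1$ by the proof of Theorem~\ref{main combinatorics}, and regrouping the coefficients of $h_{m-k}$ (resp.\ $h_m$) yields exactly the formula for $C(k_0,n,\mathbf{h})$ stated in the corollary. The even and odd cases are treated separately but identically in spirit.

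The only genuine bookkeeping hurdle is the odd-dimensional case $n = 2m+1$: the asymmetry introduced by the extra constant $3$ inside the coefficient $12k(k+1)+3-(2m+1)(k_0+1)$ means one must be careful to absorb both the contribution from $C(n,\mathbf{h})$ and the $-k_0 f_1$ term while respecting the pairing $h_j\leftrightarrow h_{n-j}$. I expect this step to consume the bulk of the verification, but it is a direct, finite computation with no conceptual obstruction, and it confirms the claimed formulas.
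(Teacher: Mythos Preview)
Your proposal is correct and follows exactly the route the paper intends. The paper states Corollary~\ref{inequalities h f} as ``an immediate consequence of Theorem~\ref{main combinatorics}'' without further detail, but the symplectic analogue (Corollary~\ref{cor 2}) is proved precisely by writing the relevant quantity as $\sum_e c_1[S^2_e] - k_0|E|$ and observing that each summand is a non-negative multiple of $k_0$; your argument is the combinatorial translation of this, replacing $c_1[S^2_e]$ by $l(e)$ and $|E|$ by $f_1$. The verification that $C(k_0,n,\mathbf{f})=C(k_0,n,\mathbf{h})$ via $f_1=\tfrac{n}{2}f_0=\tfrac{n}{2}\sum_j h_j$ and Dehn--Sommerville is likewise the expected bookkeeping, and your outline of it is accurate.
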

\begin{rmk}
$\;$\vspace{0.1cm}
\begin{enumerate}
\item For $n=2$, Corollary \ref{inequalities h f} gives $h_1\leq 4$, hence $f_0\leq 6$. The inequality is sharp, and is attained by the reflexive hexagon (see Figure \ref{fig:smooth}).
However, for $n=3$, one obtains $h_1\leq 7$, hence $f_0\leq 16$. This inequality is not sharp, since from the classification of reflexive polytopes of dimension $3$, one has $f_0\leq 14$.
\item Delzant reflexive polytopes are dual to Fano polyhedra. In \cite[Theorem 2.3.7]{Bat1} Batyrev proves an inequality for simplicial Fano polyhedra that is equivalent to 
$C(1,n,\mathbf{f})\geq 0$.
\end{enumerate}
\end{rmk}

The next result is the combinatorial analogue of Corollary \ref{cor 3}. Observe that the unimodality of the $h$-vector holds for every simple polytope.
This was proved by Stanley in the celebrated paper \cite{St}. 
Note that much more is known on (toric) Fano varieties. Indeed, for $k_0\geq n-2$ they are completely classified \cite{Ar,W}.
However, the proof of Corollary \ref{cor k0} relies on different tools, not involving any of the techniques used in algebraic geometry. 
Instead, it is a very special case of Corollary \ref{cor 3}, which applies to a class of monotone Hamiltonian $S^1$-spaces  which is much broader than the class of toric Fano manifolds, and is not classified for $n>2$. 

\begin{corollary}\label{cor k0}
Let $\Delta$ be a Delzant reflexive polytope of dimension $n$ and $h$-vector $\mathbf{h}=(h_0,\ldots,h_n)$. Let $k_0$ be the index of $\Delta$.
Then
\begin{enumerate}
\item If $k_0=n+1$, then $\Delta$ is a Delzant reflexive simplex. 
\vspace{0.2cm}
\item If $k_0=n$, then $n=2$, and $\Delta$ is $GL(2,\Z)$-equivalent to the reflexive square.
\vspace{0.2cm}
\item If $k_0=n-1$ and $n\geq 2$, then
 $$h_{j}=h_{n-j}\leq 2+\left\lfloor \frac{2}{n-1}\right\rfloor \quad\text{for all}\quad  0\leq j \leq \lambda,$$
 where $\lambda=\frac{n-1}{2}-\left\lfloor \frac{n\sqrt{3}-3}{6}\right\rfloor$ if $n$ is \emph{odd}, and $\lambda=\frac{n}{2}-\left\lfloor \frac{n\sqrt{3}}{6}\right\rfloor$ if $n$ is \emph{even}. 
 \\Moreover,
there are finitely many possibilities for the remaining $h$-numbers.
\vspace{0.2cm}
\item If $k_0=n-2$ and $n\geq 3$, then
$$h_{j}=h_{n-j}\leq 4+\left\lfloor \frac{6}{n-1}\right\rfloor \quad\text{for all}\quad  0\leq j \leq \lambda,$$
where $\lambda=\frac{n-1}{2}-\left\lfloor \frac{\sqrt{3n(n-1)}-3}{6}\right\rfloor$ if $n$ is \emph{odd}, and $\lambda= \frac{n}{2}-\left\lfloor \frac{\sqrt{3n(n-1)}}{6}\right\rfloor$ if $n$ is \emph{even}.
Moreover, there are finitely many possibilities for the remaining $h$-numbers.

\end{enumerate}
\end{corollary}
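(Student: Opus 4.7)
The plan is to reduce the corollary to its symplectic counterpart, Corollary~\ref{cor 3}, via the Delzant correspondence. Associate to $\Delta$ the symplectic toric manifold $(M_\Delta,\omega,\psi)$ provided by Delzant's theorem; by Proposition~\ref{equivalent2} this manifold is monotone with $c_1=[\omega]$, and restricting the action to a generic circle $S^1\subset\T$ produces a monotone Hamiltonian $S^1$-space admitting a toric 1-skeleton (Lemma~\ref{s1 one skeleton}(i)). Proposition~\ref{index gcd} identifies its index with $k_0$, Lemma~\ref{hequiv} identifies the $h$-vector of $\Delta$ with the vector $\mathbf{b}$ of even Betti numbers of $M_\Delta$, and Stanley's theorem~\cite{St} guarantees that $\mathbf{h}$ is unimodal. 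All the hypotheses of Corollary~\ref{cor 3} are therefore met.

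Parts~(3) and~(4) then follow by translating the Betti-number bounds of Corollary~\ref{cor 3} into bounds on the $h$-numbers, with the ``finitely many possibilities'' clauses carrying over verbatim. For part~(1), Corollary~\ref{cor 3}(1) gives $h_j=1$ for every $j$, so $f_0=\sum_j h_j=n+1$; a simple $n$-polytope with $n+1$ vertices is the $n$-simplex.

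For part~(2), one first observes that every Delzant reflexive $n$-simplex has all edges of length $n+1$ and therefore index $n+1$: this is a short computation applying the vertex-Fano relation \eqref{vertex fano} at two adjacent vertices, which forces all $n$ edge lengths at a chosen vertex to coincide and equal $n+1$. Now if $n$ is odd, Corollary~\ref{cor 3}(2)(a) yields $h_j=1$ for all $j$, making $\Delta$ a Delzant reflexive simplex and giving $k_0=n+1\neq n$, a contradiction. If $n\geq 4$ is even, Corollary~\ref{cor 3}(2)(b) yields $h=(1,\ldots,1,2,1,\ldots,1)$ with $h_{n/2}=2$; then $h_1=1$, and from $h_1=f_{n-1}-n$ we obtain $f_{n-1}=n+1$, forcing $\Delta$ to be a simplex (any $n$-polytope with only $n+1$ facets is) and hence $h_j=1$ for every $j$, contradicting $h_{n/2}=2$. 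So $n=2$ and $h=(1,2,1)$; then $f_0=f_1=4$, \eqref{formula f} gives $\sum_e l(e)=8$, and $l(e)\geq k_0=2$ forces every edge to have length exactly $2$. Applying \eqref{vertex fano} at each vertex then pins down the four vertices uniquely up to $GL(2,\Z)$, yielding the reflexive square.

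The main obstacle lies in part~(2), which rests on two ingredients outside the pure symplectic framework: the identity $h_1=f_{n-1}-n$ (with its corollary that $n$-polytopes achieving the minimum $n+1$ facets are simplices), and the index computation for Delzant reflexive simplices via the vertex-Fano condition. All other steps are straightforward translations of the symplectic results of Section~\ref{mhs}.
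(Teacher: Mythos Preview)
Your reduction to Corollary~\ref{cor 3} via the Delzant correspondence is correct and is essentially what the paper does (the paper phrases it as rerunning the proof of Corollary~\ref{cor 3} with Corollary~\ref{inequalities h f} in place of Corollary~\ref{cor 2}, which amounts to the same thing). Parts~(1), (3) and~(4) therefore match the paper's argument.

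For part~(2) your route differs from the paper's, and in a nice way. For odd $n$ the paper also reaches a simplex, but rather than invoking the index of the reflexive simplex directly it derives the contradiction from Theorem~\ref{main combinatorics}: equal edge lengths $\ell$ on a simplex give $\ell\cdot\binom{n+1}{2}=\sum_e l(e)=\tfrac{1}{2}n(n+1)^2$, forcing $\ell=n+1\neq n$. Your version (compute the index of a Delzant reflexive simplex once and for all) is equivalent. The real divergence is for even $n\geq 4$: the paper observes that $\Delta$ has $n+2$ vertices and then appeals to Gr\"unbaum's classification of polytopes with $n+2$ vertices \cite[Sect.~6.1]{Gr} to conclude that no \emph{simple} such polytope exists in dimension $\geq 3$. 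Your argument bypasses this external input: from $h_1=1$ you read off $f_{n-1}=n+1$ via the identity $h_1=f_{n-1}-n$, hence $\Delta$ is a simplex, contradicting $h_{n/2}=2$. This is more elementary and self-contained. Similarly, for $n=2$ the paper simply cites the classification of Delzant reflexive polygons, whereas you pin down the square directly from $\sum_e l(e)=8$, $l(e)\geq 2$, and the vertex-Fano relation; again more self-contained. Both approaches are valid; yours trades a classification reference for a short facet-count argument.
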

\begin{proof}
The proof follows exactly that of Corollary \ref{cor 3}, by replacing Corollary \ref{cor 2} with Corollary \ref{inequalities h f} and the vector of Betti numbers $\mathbf{b}$
with $\mathbf{h}$, which is automatically unimodal. 
The only thing which must be proved is the claim in (2). By the same procedure as in the proof of Corollary \ref{cor 3} (2), we have that, if $n$ is odd, then $h_i=1$ for all 
$i=0,\ldots,n$, and, if $n$ is even, then $h_i=1$ for all
$i\neq \frac{n}{2}$, and $h_{\frac{n}{2}}=2$.  

If $n$ is odd, then the Delzant polytope $\Delta$ has $n+1$ vertices, implying that it is a simplex. 
Since it is Delzant, the relative length of all its edges must be the same, because a Delzant simplex is, modulo rescaling, $GL(n;\Z)$-equivalent to the 
standard simplex $$\Delta_0=\{(x_1,\ldots,x_n)\mid  x_1+\cdots+x_n\leq 1,\;\; x_i\geq 0, \,\, i=1,\ldots,n\}\,.$$
Since $k_0=n$, it follows that $\Delta$ is $GL(n;\Z)$-equivalent to $n\Delta_0$. However, from Theorem \ref{main combinatorics}, we have that
$$n\lvert \Delta[1]\rvert=\sum_{e\in \Delta[1]}l(e)=\frac{1}{2}n(n+1)^2\,,$$
with $\lvert \Delta[1]\rvert=\frac{1}{2}n(n+1)$, which is impossible.

If $n$ is even, then $\Delta$ is a Delzant polytope with $n+2$ vertices. From the classification of polytopes with $n+2$ vertices \cite[Sect.\ 6.1]{Gr} we see that
the only cases where the polytope is simple occur in dimension $2$. From the classification of Delzant reflexive polygons, the only possible case with $k_0=2$ is the reflexive square, modulo $GL(2;\Z)$-transformations (see Figure \ref{fig:smooth}).
\end{proof}

\subsection{Reflexive GKM graphs}\label{mg}

Theorem \ref{A} allows us to generalize Theorem \ref{main combinatorics} to a larger class of objects, called \emph{reflexive (GKM) graphs}.

Let $(M,\omega,\psi)$ be a Hamiltonian GKM space, with an effective action of a torus $\T$ of dimension $d$.
As we pointed out in Remark \ref{embed gkm}, using the moment map $\psi$, the GKM graph $(V,E_{GKM})$ associated to $(M,\omega,\psi)$ can be regarded as a graph in $Lie(\T)^*\simeq\R^d$. It is indeed possible to prove that such graph cannot be contained in any affine subspace of $\R^d$, as this would contradict effectiveness of the action. In contrast with the graph associated to a symplectic toric manifold (the 1-skeleton of the corresponding polytope), this graph may not be embedded in $\R^d$ (see Figure \ref{monotone graphs}, where
different edges intersect in points which are not vertices). In the following, we require that the moment map is \emph{injective on the fixed point set}. With abuse of notation, we also denote
the image of the GKM graph in $\R^d$ by $(V,E_{GKM})$, and call it a GKM graph. With this convention we have that vertices $v\in V$ are points in $\R^d$ (which we mark with a blue dot) and edges $e\in E_{GKM}$ are segments in $\R^d$ between vertices (see Figure \ref{monotone graphs}).

Note that the number of edges incident to a vertex is always the same. This follows from the injectivity of $\psi$ on the fixed point set, the fact that,
for each edge $e=(v_1,v_2)$, the segment
$\psi(v_2)-\psi(v_1)$ has the same direction as the weight $w_e$ associated to the edge (Lemma \ref{volume}), and for each vertex $v$, the weights 
of the edges $(v,v_i)$ are pairwise linearly independent. Hence the graph $(V,E_{GKM})$ is \emph{regular}, with degree equal to $n=\dim(M)/2$. 
Observe that, in contrast with the toric case, $n$ is not necessarily equal to $d=\dim(\T)$. Since the orbits of $\T$ are isotropic and the action is 
effective, we must have $n\geq d$. In literature, the difference $n-d$ is called the \emph{complexity} of the Hamiltonian (GKM) $\T$-space.

Now we can define the objects that replace the concept of reflexive (Delzant) polytope.
First of all, denote by $\ell^*\subset \R^d$ the dual lattice of the torus $\T$. Here we do not necessarily identify it with $\Z^d$.
\begin{defin}[\textbf{Reflexive GKM graphs}]\label{def monotone}
Let $(M,\omega,\psi)$ be a GKM space with $\psi$ injective on the fixed point set, and $(V,E_{GKM})$ the corresponding GKM graph in $\R^d$. 
Such graph is called \emph{reflexive} if, for every vertex $v\in V\subset \R^d$, the following condition holds:
\begin{equation}\label{monotone vertex}
\sum_{j=1}^n w_j = -v\,,
\end{equation}
where $w_1,\dots,w_n$ are the primitive vectors in $\ell^*$ pointing along the edges of $E_{GKM}$ incident to $v$.
\end{defin}
Note that, \emph{mutatis mutandis}, \eqref{monotone vertex} is exactly \eqref{vertex fano} in Proposition \ref{equivalent}. 
Indeed, as already remarked, symplectic toric manifolds are special cases of Hamiltonian GKM spaces, and the graph corresponding to
the 1-skeleton of a Delzant reflexive polytope $\Delta$ is a reflexive GKM graph.

In analogy with Gorenstein polytopes as generalizations of reflexive polytopes, it is possible to define {\bf Gorenstein (or monotone) GKM graphs} as those
GKM graphs $\Gamma_{GKM}$ associated to a GKM space $(M,\omega,\psi)$ as above, such that there exists $r>0$ for which
\begin{equation}\label{gor gkm}
\sum_{j=1}^n w_j = -r v\,,
\end{equation}
for every vertex $v\in V$, where $w_1,\dots,w_n$ are the primitive vectors in $\ell^*$ pointing along the edges of $E_{GKM}$ incident to $v$.
In analogy with Gorenstein polytopes, we can refer to $r$ as the {\bf index} of $\Gamma_{GKM}$.
Note that the index $r$ should not be confused with the index $k_0$ of the underlying Hamiltonian (GKM) space, see Definition \ref{psindex}.
 However, it can be proved that if $\Gamma_{GKM}$ is a Gorenstein GKM graph which is `primitive', namely $\gcd\{l(e)\mid e\in E_{GKM}\}=1$,
 and all of its vertices are in the lattice $\ell^*$,
 then the index $r$ coincides with the index $k_0$ of the corresponding Hamiltonian GKM space.
The graphs in Figure \ref{flags pic} are examples of (primitive) Gorenstein GKM graphs with index $r$ given respectively by $2$ and $3$.

Figures~\ref{monotone graphs} and \ref{A2} give  examples of reflexive graphs, all associated to flag varieties. In Proposition \ref{graph flag mon} we 
exhibit a whole class of reflexive graphs associated to
 flag varieties.
 \begin{rmk}\label{origin}
 $\;$\vspace{0.1cm}
 \begin{itemize}
 \item[(1)] From Definition \ref{def monotone} it follows that the vertices of a reflexive graph are all elements of $\ell^*$.
 \item[(2)] We recall that reflexive polytopes have a unique interior lattice point, which is set to be the origin. Reflexive graphs may not have a unique
 interior point in $\ell^*$, as Figure \ref{monotone graphs} shows. However, condition \eqref{monotone vertex} implies that the sum of all their vertices 
 is zero, implying that the origin is an interior point of the convex hull of all the vertices $Conv(V)$. Indeed, the sum over all the vertices of the left hand side of \eqref{monotone vertex}  always contains pairs of the form $w_i$ and $-w_i$. Hence the origin is still a
  `special' interior point of $Conv(V)$. 
 \end{itemize}
\end{rmk}
\begin{prop}\label{equivalent monotone}
Let $(M,\omega,\psi)$ be a GKM space with $\psi$ injective on the fixed point set, and $(V,E_{GKM})$ the corresponding GKM graph in $\R^d$. 
Then the following are equivalent:
\begin{itemize}
\item[(I)] $(V,E_{GKM})$ is reflexive;
\item[(II)] $(M,\omega,\psi)$ is monotone, with $c_1=[\omega]$.
\end{itemize}
\end{prop}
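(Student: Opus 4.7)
The plan is to adapt the proof of Proposition \ref{equivalent2} from the symplectic toric setting to the GKM setting. Three ingredients carry over essentially unchanged: (a) at a fixed point $p\in M^\T$, the equivariant first Chern class evaluates as $c_1^\T(p) = \sum_j w_j^\T$, the sum of the $\T$-weights at $p$; (b) the Kirwan Injectivity Theorem asserts that restriction to the fixed point set $M^\T\hookrightarrow M$ induces an injection $H^*_\T(M;\R)\hookrightarrow H^*_\T(M^\T;\R)$ (taking $\Z$-coefficients is allowed since the fixed points are isolated); and (c) the kernel of the restriction map $r\colon H^*_\T(M;\R)\to H^*(M;\R)$ is the ideal generated by $\mathbb{S}(Lie(\T)^*)$.

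For (I) $\Rightarrow$ (II), the first step is to identify, at each $p\in M^\T$, the isotropy $\T$-weights with the primitive vectors in $\ell^*$ along the edges of the GKM graph incident to $v=\psi(p)$. Granting this identification, hypothesis \eqref{monotone vertex} reads $c_1^\T(p) = \sum_j w_j = -\psi(p)$ for every $p\in M^\T$. The classes $c_1^\T$ and $[\omega-\psi]$ in $H^2_\T(M;\Z)$ (viewed inside $H^2_\T(M;\R)$) then have equal restriction to every fixed point, so Kirwan injectivity forces $c_1^\T = [\omega-\psi]$. Applying $r$ yields $c_1 = [\omega]$, giving monotonicity.

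For the converse (II) $\Rightarrow$ (I), I would start from $c_1 = [\omega]$ and observe that both sides admit equivariant extensions, namely $c_1^\T$ and $[\omega-\psi]$. Their difference therefore lies in $\ker r$, hence equals a constant element $c\in Lie(\T)^*$, which can be absorbed by shifting the moment map $\psi$ by $c$. We then have $c_1^\T = [\omega-\psi]$, and restricting to each fixed point $p$ gives $c_1^\T(p) = -\psi(p)$. Unpacking the left-hand side as the sum of the $\T$-weights at $p$ and re-using the identification of these with the primitive edge vectors recovers \eqref{monotone vertex}.

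The main obstacle I anticipate is precisely the identification invoked above: one must check that, for an effective Hamiltonian GKM action with $\psi$ injective on $M^\T$, the isotropy $\T$-weight along each edge $e=(p,q)$ is primitive in $\ell^*$, so that it coincides with the primitive edge vector appearing in Definition \ref{def monotone}. This should follow from the effectiveness of the quotient circle $\T/K_e$ on the sphere $S^2_e$, combined with Lemma \ref{volume2}, which relates $\psi(q)-\psi(p)$ to the isotropy weight via the positive symplectic volume $l(e)$. Once this compatibility is secured, the rest of the proof is a direct transcription of the argument for Proposition \ref{equivalent2} into GKM vocabulary.
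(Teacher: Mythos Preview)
Your proposal is correct and follows essentially the same route as the paper: identify the primitive edge vectors at $v=\psi(p)$ with the isotropy $\T$-weights at $p$, so that \eqref{monotone vertex} becomes \eqref{chern condition 2}, and then run the argument of Proposition~\ref{equivalent2} verbatim using Kirwan injectivity and the description of $\ker r$. The only difference is that the paper dispatches what you flag as the ``main obstacle'' in one line by invoking Definition~\ref{def:GKMgraph}~(c') and Remark~\ref{embed gkm}~(c''), where primitivity of the edge labels $w_e\in\ell^*$ is already built into the construction of the GKM graph (ultimately from effectiveness of the $\T$-action), so no separate verification via Lemma~\ref{volume2} is needed.
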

Since symplectic toric manifolds are special GKM spaces, Proposition \ref{equivalent monotone} is a generalization of Proposition \ref{equivalent2}.
Moreover, it can be immediately generalized, to say that having a Gorenstein GKM graph of index $r$ is equivalent to having a
monotone GKM space $(M,\omega,\psi)$ with $c_1=r[\omega]$. (Note that, from Lemma \ref{lambda positive}, this constant $r$ is necessarily positive.)
\begin{proof}
First of all observe that, by Definition~\ref{def:GKMgraph} (c') and Remark \ref{embed gkm} (c''), the vectors $w_1,\ldots,w_n$ in \eqref{monotone vertex} are exactly the weights of the isotropy
representation of $\T$ at $p\in M$, where $p$ is the unique fixed point such that $\psi(p)=v$. Hence \eqref{monotone vertex} is equivalent to \eqref{chern condition 2}.
Then the proof of Proposition \ref{equivalent monotone} is \emph{verbatim} the proof of Proposition \ref{equivalent2}, where we do not use the fact that the action is toric,  but just that it is Hamiltonian  with isolated fixed points.
\end{proof}
Before stating the analogue of Theorem \ref{main combinatorics}, we need to define the analogue of the $h$-vector for a GKM graph.
Inspired by Lemma \ref{hequiv},
let $\xi\in \R^d$ be a generic vector, hence $\langle w_e, \xi \rangle \neq 0$ for all $e\in E_{GKM}$, and use it to direct the edges in $E_{GKM}$.
For every generic vector $\xi\in \R^d$, define the $h^{\xi}$-vector of $(V,E_{GKM})$ to be $\mathbf{h}^\xi=(h_0^\xi,\ldots,h_n^\xi)$, where 
\begin{equation}\label{hxi}
h_j^\xi:=\{\# \mbox{ of vertices with }j \mbox{ entering edges}\}\quad\mbox{for all}\quad j=0,\ldots,n,
\end{equation}
and $2n$ is the dimension of the corresponding GKM manifold $M$.
The next result is the analogue of Lemma \ref{hequiv}.
\begin{lemma}\label{hequiv gkm}
Let $(M,\omega,\psi)$ be a GKM space of dimension $2n$, with $\psi$ injective on the fixed point set, and let $(V,E_{GKM})$ be the corresponding GKM graph in $\R^d$. 
Then the following two vectors associated to the GKM space above are the same:
\begin{itemize}
\item[(1)] $\mathbf{h}^\xi=(h_0^\xi,\ldots,h_n^\xi)$ for a generic $\xi\in \R^d$
\item[(2)] $\mathbf{b}=(b_0,b_2,\ldots,b_{2n})$, the vector of even Betti numbers of $M$.
\end{itemize}
In particular $\mathbf{h}^{\xi}$ is independent of the generic $\xi$ chosen.
\end{lemma}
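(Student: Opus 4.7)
The plan is to mimic the Morse-theoretic half of the proof of Lemma \ref{hequiv}, using the GKM structure to supply the dictionary between weights at a fixed point and edges of $E_{GKM}$ incident to it. Concretely, for a generic $\xi\in \R^d$ -- that is, $\langle w,\xi\rangle\neq 0$ for every weight $w$ at every fixed point, which is exactly the genericity condition needed for $\mathbf{h}^\xi$ to be well-defined -- I would consider the function
$$
\varphi\colon M\to \R,\qquad \varphi(p):=\langle \psi(p),\xi\rangle,
$$
which is the moment map for the $1$-parameter subgroup $\exp(\R \xi)\subset \T$.

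First I would verify that $\varphi$ is a perfect Morse function and compute its indices. Its critical set is the zero set of $\xi^\#$, which under the genericity assumption coincides with $M^\T$; the latter is identified bijectively with $V$ because $\psi$ is injective on $M^\T$. A standard computation in the local model \eqref{weightsII} around $p\in M^\T$ shows that $\varphi$ has a non-degenerate critical point there, with each weight $w_j$ contributing a two-dimensional Hessian eigenspace whose sign agrees with that of $\langle w_j,\xi\rangle$. With the sign convention of \eqref{mm} (the same one used in Lemma \ref{lambda positive}, where the minimum of $\varphi$ is characterized by $\langle w_j,\xi\rangle>0$ for every $j$), the Morse index at $p$ is
$$
\lambda_p \;=\; 2\cdot\#\left\{\, j\in\{1,\dots,n\} \,:\, \langle w_j,\xi\rangle<0 \,\right\}.
$$
All Morse indices are therefore even, so $\varphi$ is automatically a perfect Morse function and
$b_{2j}(M)=\#\{p\in M^\T : \lambda_p=2j\}$ for every $j$.

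The remaining step is to match $\lambda_p/2$ with the number of edges of $E_{GKM}$ entering $p$ in the $\xi$-orientation. By Definition \ref{def:GKMgraph}, the edges of $E_{GKM}$ incident to $p$ are in bijection with the weights $w_1,\dots,w_n$ of the $\T$-action at $p$; for the edge $e$ whose $\T$-representation on $S^2_e$ at $p$ has weight $w_j$, Lemma \ref{volume2} gives $\psi(q)-\psi(p)=l(e)\,w_j$, where $q$ is the other endpoint of $e$. Consequently $\varphi(q)<\varphi(p)$, i.e.\ $e$ enters $p$, precisely when $\langle w_j,\xi\rangle<0$. Hence the number of edges entering $p$ equals $\lambda_p/2$, and combining with the perfect Morse identity yields $h_j^\xi=b_{2j}(M)$ for every $j$. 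The independence of $\mathbf{h}^\xi$ from $\xi$ is then automatic, since $b_{2j}(M)$ is a topological invariant of $M$. I do not anticipate a serious obstacle: the argument is a verbatim translation of the Morse-theoretic part of Lemma \ref{hequiv}, with the edges of $E_{GKM}$ playing the role of the edges of the moment polytope and the GKM structure providing the correspondence between edges and weights.
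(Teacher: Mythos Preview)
Your proposal is correct and follows essentially the same approach as the paper: the paper's proof consists of the single sentence ``The proof of equivalence between $\mathbf{h}^\xi$ and $\mathbf{b}$ is identical to that of Lemma~\ref{hequiv}'', and what you have written is precisely a careful unpacking of that Morse-theoretic argument, adapted to the GKM setting via Lemma~\ref{volume2} and Definition~\ref{def:GKMgraph}. Your added detail---the explicit use of the local model to compute the Hessian signature and the invocation of Lemma~\ref{volume2} to match weights with edge directions---is exactly the translation the paper leaves implicit.
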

\begin{proof}
The proof of equivalence between $\mathbf{h}^\xi$ and $\mathbf{b}$ is identical to that of Lemma \ref{hequiv}.
\end{proof}
\begin{rmk}
The independence of $\mathbf{h}^\xi$ on the generic $\xi\in \R^d$ chosen can be proved entirely combinatorially \cite[Thm.\ 1.3.1]{GZ}.
\end{rmk}
\begin{defin}\label{gkm h vector}
Given a GKM graph $(V,E_{GKM})$ of degree $n$, the $h$-vector $\mathbf{h}=(h_0,\ldots,h_n)$ is defined to be the $h^\xi$-vector $\mathbf{h}^\xi=(h_0^\xi,\ldots,h_n^\xi)$, for some generic $\xi\in \R^d$.
\end{defin}
\begin{exm}
For the GKM graphs in Figure \ref{monotone graphs}, the $h$-vectors are respectively $(1,2,2,2,1)$, $(1,1,1,1)$ and $(1,1,1,1)$.
\end{exm}
The next corollary is a generalization of Theorem \ref{main combinatorics}.
\begin{corollary}\label{main combinatorics 2}
Let $(V,E_{GKM})$ be a reflexive GKM graph associated to the Hamiltonian GKM space $(M,\omega,\psi)$.
Let $l(e)$ be the relative length of $e$, for every $e\in E_{GKM}$, and
$\mathbf{h}=(h_0,\ldots,h_n)$ the $h$-vector of $(V,E_{GKM})$. 
Then $\displaystyle\sum_{e\in E_{GKM}}l(e)$ only depends on $\mathbf{h}$. 
More precisely,
\begin{equation}\label{length gkm}
\sum_{e\in E_{GKM}}l(e)=C(n,\mathbf{h}),
\end{equation}
where $C(n,\mathbf{h})$ is the integer defined in \eqref{def C(n,h)}.

\end{corollary}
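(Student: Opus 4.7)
The plan is to mirror the symplectic proof of Theorem~\ref{main combinatorics} given on page~\pageref{proof symplectic tools}, substituting each toric input by its GKM analogue that has already been established in the text. Concretely, I would build up the chain
\[
\sum_{e\in E_{GKM}} l(e) \;=\; \sum_{e\in E_{GKM}} \mathrm{Vol}_\omega(S^2_e) \;=\; \sum_{e\in E_{GKM}} c_1[S^2_e] \;=\; C(n,\mathbf{b}) \;=\; C(n,\mathbf{h}),
\]
each equality following from a result that is already available.

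First, Proposition~\ref{equivalent monotone} translates reflexivity of $(V,E_{GKM})$ into monotonicity of $(M,\omega,\psi)$ together with the normalization $c_1=[\omega]$ in $H^2(M;\Z)$. Next, Lemma~\ref{volume2} gives $l(e)=\mathrm{Vol}_\omega(S^2_e)=\int_{S^2_e}i^*\omega$ for every $e\in E_{GKM}$, and monotonicity lets me replace $i^*\omega$ by $i^*c_1$, yielding $l(e)=c_1[S^2_e]$.

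Second, to apply Theorem~\ref{A} I must exhibit $(M,\omega,\psi)$ as a Hamiltonian $S^1$-space that admits $\{S^2_e\}_{e\in E_{GKM}}$ as a toric $1$-skeleton. I would do so by restricting the $\T$-action to a subcircle $S^1=\exp(\R\xi)$, where $\xi\in Lie(\T)$ is rational and generic, meaning that $\langle w,\xi\rangle\neq 0$ for every weight $w$ of the $\T$-isotropy representation at a $\T$-fixed point. Such a $\xi$ exists because the forbidden hyperplanes form only a finite union in $Lie(\T)$. Lemma~\ref{s1 one skeleton}~(i) together with Remark~\ref{S1-GKM}~(iii) then guarantees that the restricted $S^1$-action has the same fixed set as $\T$, that the graph $(V,E_{GKM})$, relabeled by $e\mapsto |\langle w_e,\xi\rangle|$, is an integral multigraph for it, and that the $\T$-invariant spheres indexed by $E_{GKM}$ are precisely a toric $1$-skeleton for this restriction. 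Applying Theorem~\ref{A} then yields $\sum_{e\in E_{GKM}}c_1[S^2_e]=C(n,\mathbf{b})$, with $\mathbf{b}$ the vector of even Betti numbers of $M$.

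Finally, I would invoke Lemma~\ref{hequiv gkm} to identify $b_{2j}=h_j$ for every $j$, so that $C(n,\mathbf{b})$ from \eqref{def C(n,b)} coincides term-by-term with $C(n,\mathbf{h})$ from \eqref{def C(n,h)}. The main point requiring care is the production of the generic rational direction $\xi$ and the verification that the restricted $S^1$-space really does admit $\{S^2_e\}_{e\in E_{GKM}}$ as its toric $1$-skeleton; this is, however, immediate from the GKM hypothesis that the weights at each fixed point are pairwise linearly independent, so I expect no genuine obstacle---all the heavy lifting has been done in Theorem~\ref{A} and in the equivariant-extension argument underlying Proposition~\ref{equivalent monotone}.
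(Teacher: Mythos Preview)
Your proposal is correct and follows essentially the same approach as the paper: use Proposition~\ref{equivalent monotone} to get $c_1=[\omega]$, Lemma~\ref{volume2} to get $l(e)=c_1[S^2_e]$, then Theorem~\ref{A} and Lemma~\ref{hequiv gkm} to conclude. The only difference is that you spell out explicitly how to restrict the $\T$-action to a generic circle subgroup in order to place yourself in the setting of Theorem~\ref{A} (via Lemma~\ref{s1 one skeleton}~(i) and Remark~\ref{S1-GKM}~(iii)), whereas the paper takes this passage for granted; your added care is appropriate and does not change the argument.
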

\begin{proof}
The proof is entirely analogous to the symplectic proof of Theorem \ref{main combinatorics} on page \pageref{proof symplectic}.
Indeed, Proposition \ref{equivalent monotone} implies that
$(M,\omega,\psi)$ is monotone with $c_1=[\omega]$. Hence $c_1[S^2_e]=\int_{S^2_e}i^*\omega=l(e)$ for every $e\in E_{GKM}$, where the last equality follows from Lemma \ref{volume}. 
Now the claim follows from Theorem \ref{A}, Lemma \ref{hequiv gkm} and the fact that $\chi(M)=\lvert V\rvert=\sum_{j=0}^n h_j$.
\end{proof}

\begin{rmk}\label{ggkm}
The above corollary can be immediately generalized to Gorenstein GKM graphs of index $r>0$, for which one has
$$
\sum_{e\in E_{GKM}}l(e)=\frac{1}{r}C(n,\mathbf{h})
$$
(see also Remark \ref{equiv 3} (2)).
\end{rmk}

\subsubsection{Examples of reflexive GKM graphs: Coadjoint orbits}\label{flags}

In this subsection we exhibit an interesting class of reflexive GKM graphs, namely those arising as the GKM graphs of coadjoint orbits of compact simple Lie groups (or equivalently flag varieties)
endowed with a monotone symplectic structure, with $c_1=[\omega]$.
In the following we recall facts about such manifolds regarded as GKM spaces. More details can be found in \cite{GHZ}, \cite[Sect.\  4.2]{GSZ} 
and \cite[Sect.\ 6]{ST}.

Let $G$ be a compact simple Lie group with Lie algebra $\mathfrak{g}$, and let $\T\subset G$
be a maximal torus with Lie algebra $\mathfrak{t}$.
Let $\langle \cdot,\cdot \rangle$ be a positive definite, symmetric, $G$-invariant bilinear form on $\mathfrak{g}$, and use it to view $\mathfrak{t}^*$ as a subspace of $\mathfrak{g}^*$.
Denote by $R\subset \mathfrak{t}^*$ the set of roots, by $R^+$ a choice of positive roots, and by $R_0$ the corresponding simple roots. 
Let $W$ be the Weyl group of $G$, which is generated by the reflections $s_\alpha\colon \mathfrak{t}^*\to \mathfrak{t}^*$ across the hyperplanes
$H_\alpha$ orthogonal to the simple roots $\alpha\in R_0$. We denote the action of an element $w\in W$ on $\beta\in \mathfrak{t}^*$ by $w(\beta)$.
We recall that for every $\alpha\in R$ and $w\in W$ one has
\begin{equation}\label{swap}
s_{w(\alpha)}w=w\,s_\alpha\,.
\end{equation}

For any choice of a subset $\emptyset\subseteq I\subset R_0$, let $W_I$ be the subgroup of $W$ generated by reflections $s_\alpha$ with $\alpha\in I$,
and let $\langle I \rangle \subseteq R^+$ be the set of positive roots that can be written as linear combinations of elements of $I$.
Notice that any reflection in $W_I$ is of the form $s_\alpha$, for some $\alpha\in \langle I \rangle$ (see \cite[Sect.\ 1.14]{Hum}). 
Moreover, the set $R^+\setminus \langle I \rangle$ is $W_I$-invariant \cite[Lemma 4.1]{GSZ}. 

For any such $I$, we define the following abstract graph $\Gamma_I=(V_I,E_I)$:
\begin{itemize}
\item[$\bullet$] The vertices are the right cosets
$$
W/W_I=\{w W_I\mid w\in W\}=\{[w]\mid w\in W\}\,.
$$
\item[$\bullet$] Two vertices $[v]$ and $[w]$ are joined by an edge if and only if $[v]=[w s_\alpha]$ for some $\alpha\in R^+\setminus \langle I \rangle$.
The invariance of $R^+\setminus \langle I \rangle$ under the action of $W_I$ proves that this definition makes sense.
Note that, for every $[v],\,[w]\in W/W_I$, there exists an edge from $[v]$ to $[w]$ exactly if there is one from $[w]$ to $[v]$. Instead of
having such pair of directed edges, we just take one undirected edge with endpoints $[ws_\alpha]=[v]$ and $[w]$. Thus this graph has \emph{unoriented} edges.
\end{itemize}
We define a map from $\Gamma_I$ to $\mathfrak{t}^* \simeq \R^d$ that restricts to a bijection on $V_I$, such that the image of $\Gamma_I$ in $\R^d$
is the GKM graph of a Hamiltonian $\T$-space. 

Let $p_0\in \mathfrak{t}^*$ be a point lying in the intersection of the hyperplanes $\bigcap_{\alpha\in I}H_\alpha$.
The point $p_0$ is said to be \emph{generic} in this intersection if
$s_\alpha(p_0)\neq p_0$ for all $\alpha \in R^+\setminus \langle I\rangle$. Since $p_0$ is not orthogonal to any of the roots in $R^+\setminus \langle I \rangle$, we can assume that
$\langle p_0, \alpha \rangle<0$ for all $\alpha\in R^+\setminus \langle I \rangle$.

For any such choice, define the following map:
\begin{equation}\label{map flag}
\Psi_{p_0}\colon W/W_I \to \mathfrak{t}^*\,, \quad \Psi_{p_0}([v])=v(p_0)\,.
\end{equation}
Since $W_I$ acts trivially on $\bigcap_{\alpha\in I}H_\alpha$, and since $p_0$ belongs to this intersection, the map $\Psi_{p_0}$ is well-defined. 
\begin{lemma}\label{psi inj}
The map $\Psi_{p_0}$ defined in \eqref{map flag} is injective. 
\end{lemma}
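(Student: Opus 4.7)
The plan is to reduce the statement to an equality of subgroups of $W$. By the definition of $\Psi_{p_0}$, we have $\Psi_{p_0}([v]) = \Psi_{p_0}([w])$ iff $v^{-1}w$ stabilises $p_0$; hence injectivity of $\Psi_{p_0}$ on $W/W_I$ is equivalent to the equality $\mathrm{Stab}_W(p_0) = W_I$. The inclusion $W_I \subseteq \mathrm{Stab}_W(p_0)$ is immediate: the generators $s_\alpha$ (for $\alpha \in I$) each fix $H_\alpha$ pointwise, and $p_0 \in \bigcap_{\alpha \in I} H_\alpha$ by construction.

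For the reverse inclusion, I would invoke the classical Chevalley--Steinberg theorem: for any finite real reflection group acting on a Euclidean space, the stabiliser of a point $p$ is generated by the reflections $s_\alpha$ with $\langle \alpha, p \rangle = 0$. I then analyse these roots for $p = p_0$, splitting into two cases. If $\alpha \in \langle I \rangle$, then $\alpha$ is a non-negative integer combination of elements of $I$, each of which is orthogonal to $p_0$; hence $\langle \alpha, p_0 \rangle = 0$ and $s_\alpha$ fixes $p_0$. If instead $\alpha \in R^+ \setminus \langle I \rangle$, then by the standing assumption $\langle p_0, \alpha \rangle < 0$, so $s_\alpha$ does not fix $p_0$.

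Combining these, $\mathrm{Stab}_W(p_0)$ is generated by $\{s_\alpha : \alpha \in \langle I \rangle\}$, and each such reflection in fact lies in $W_I$. Indeed, $\langle I \rangle$ is the set of positive roots of the sub-root system on which $W_I$ acts as its full Weyl group, so every $\alpha \in \langle I \rangle$ is $W_I$-conjugate to some simple root $\beta \in I$; writing $\alpha = w(\beta)$ with $w \in W_I$ and applying \eqref{swap}, one gets $s_\alpha = w\, s_\beta\, w^{-1} \in W_I$. This yields $\mathrm{Stab}_W(p_0) \subseteq W_I$, hence equality, and the lemma follows.

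The only real obstacle is invoking the Chevalley--Steinberg generation theorem. If one prefers a self-contained proof, the alternative is to take $u \in \mathrm{Stab}_W(p_0)$ and induct on its length as a word in simple reflections: the sign computation above guarantees that $u$ maps no element of $R^+ \setminus \langle I \rangle$ to a negative root, which via the exchange condition forces each reduced expression of $u$ to use only simple reflections lying in $I$, so that $u \in W_I$.
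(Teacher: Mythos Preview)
Your proof is correct and follows essentially the same approach as the paper: both reduce injectivity to the equality $\mathrm{Stab}_W(p_0)=W_I$ and then invoke the fact that the stabiliser of a point in a finite reflection group is generated by the reflections it contains (you call this Chevalley--Steinberg; the paper cites it as \cite[Thm.\ 1.12 (c)]{Hum}), combined with the observation that the reflections fixing $p_0$ are exactly those $s_\alpha$ with $\alpha\in\langle I\rangle$. Your write-up is a bit more detailed in spelling out why each such $s_\alpha$ lies in $W_I$ (a fact the paper already recorded earlier by citing \cite[Sect.\ 1.14]{Hum}), and your closing sketch of a length-induction alternative is a nice addition, but the core argument is the same.
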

\begin{proof}
Injectivity is equivalent to saying that the stabilizer group $W_{p_0}$ of $p_0$ is exactly $W_I$. But this follows from \cite[Thm.\ 1.12 (c)]{Hum}:
$W_{p_0}$ is generated by the reflections that it contains. Since $W_{p_0}$ contains all reflections $s_\alpha$ for $\alpha\in \langle I\rangle$, but no
$s_\alpha$, for $\alpha\in R^+\setminus \langle I\rangle$, the conclusion follows. 
\end{proof}
With the map \eqref{map flag} at hand, we can define the graph $\Gamma_I(p_0)=(V,E)$ as the `image' of the abstract graph $\Gamma_I$ in $\R^d$, where $\R^d$ is endowed with lattice $\ell^*=\Z\langle \alpha_1,\ldots,\alpha_d\rangle$, and $R_0=\{\alpha_1,\ldots,\alpha_d\}$. Hence we have that:
\begin{itemize}
\item[$\bullet$] The vertex set $V$ is obtained by \eqref{map flag};
\item[$\bullet$] Two vertices $v(p_0)$ and $w(p_0)$ are joined by an edge if and only if $$v(p_0)-w(p_0)= m\cdot w(\alpha)$$ for some $\alpha\in R^+\setminus \langle I \rangle$ and $m\in \R\setminus \{0\}$.
Indeed, from \eqref{swap} we have that if $[v]=[w s_\alpha]=[s_{w(\alpha)}w]$, then $$v(p_0)-w(p_0)=s_{w(\alpha)}w(p_0)-w(p_0)=m \cdot w(\alpha)\,.$$
The converse follows similarly.
\end{itemize}
Now we introduce the Hamiltonian GKM space which has $\Gamma_I(p_0)$ as the corresponding GKM graph. 
Given $p_0\in \mathfrak{t}^*$, consider its coadjoint orbit $\mathcal{O}_{p_0}:=G \cdot p_0\subset \mathfrak{g}^*$. Then $\mathcal{O}_{p_0}$ can be endowed with the Kostant--Kirillov symplectic form $\omega$, and the induced action of $\T$ on $(\mathcal{O}_{p_0},\omega)$ is Hamiltonian with moment map given by the inclusion $\mathcal{O}_{p_0}\hookrightarrow \mathfrak{g}^*$ followed by the projection
 $\mathfrak{g}^*\to \mathfrak{t}^*$.  As it is carefully described in \cite{GHZ}, this action is GKM with GKM graph given exactly by $\Gamma_I(p_0)$. 
 
 The GKM condition, which translates into proving that the isotropy weights at each $\T$-fixed point are pairwise linearly independent, is easy to check. 
 Let $\mathcal{W}(p)$ be the set of such weights.
 Consider first the right coset $W_I/W_I$ which, under the map $\Psi_{p_0}$, is mapped exactly to $p_0$. First we compute $\mathcal{W}(p_0)$. 
 Note that $p_0$ is connected, via the GKM graph, to all points of the form $s_\alpha(p_0)$, for all $\alpha\in R^+\setminus \langle I \rangle$. Moreover, the weight
 of the isotropy representation of $\T$ on the tangent space at $p_0$ of the sphere with fixed points $p_0$ and $s_{\alpha}(p_0)$, 
 is the unique primitive integral vector in $\ell^*$ parallel to $\psi(s_{\alpha}(p_0))-\psi(p_0)=s_{\alpha}(p_0)-p_0$ with the same direction (see (c'') and Lemma \ref{volume}).
 Since $s_\alpha(p_0)-p_0=-2\frac{\langle p_0, \alpha \rangle}{\langle \alpha, \alpha \rangle}\alpha$, and by choice $\langle p_0, \alpha \rangle <0$, it follows that this weight is exactly $\alpha$, and so
 \begin{equation}\label{weights p0}
 \mathcal{W}(p_0)= R^+ \setminus \langle I \rangle\,.
 \end{equation}
In analogy with the above argument, the vertex $w(p_0)$ is joined in the GKM graph to vertices of the form $ws_{\alpha}(p_0)=s_{w(\alpha)}w(p_0)$. 
By $G$-invariance of $\langle \cdot , \cdot \rangle$ we have that $$s_{w(\alpha)}w(p_0)-w(p_0)=-2\frac{\langle w(p_0), w(\alpha) \rangle}{\langle w(\alpha), w(\alpha) \rangle}w(\alpha)=-2\frac{\langle p_0, \alpha \rangle}{\langle \alpha, \alpha \rangle}w(\alpha)\,,$$ implying that
\begin{equation}\label{weights w}
\mathcal{W}(w(p_0)) = w(R^+ \setminus \langle I \rangle)=\{w(\alpha)\mid \alpha \in R^+\setminus \langle I \rangle\}.
\end{equation}  
In the next proposition we find the point $p_0\in \mathfrak{t}^*$ such that the corresponding coadjoint orbit is monotone.  
\begin{prop}\label{graph flag mon}
For $p_0=-\sum_{\alpha\in R^+\setminus \langle I \rangle}\alpha$ the GKM graph $\Gamma_I(p_0)$ is reflexive, and the
coadjoint orbit $(\mathcal{O}_{p_0},\omega)$ is monotone, with $c_1=[\omega]$.
\end{prop}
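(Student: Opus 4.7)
The plan is to verify the defining condition of a reflexive GKM graph at every vertex of $\Gamma_I(p_0)$ and then invoke Proposition~\ref{equivalent monotone} to conclude that $(\mathcal{O}_{p_0},\omega)$ is monotone with $c_1=[\omega]$.

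First, I would confirm that $p_0$ is admissible for the construction preceding the statement: it must lie in $\bigcap_{\alpha\in I} H_\alpha$, be generic there, and satisfy $\langle p_0,\alpha\rangle<0$ for every $\alpha\in R^+\setminus\langle I\rangle$. The membership in $\bigcap_{\alpha\in I}H_\alpha$ is immediate from the $W_I$-invariance of $R^+\setminus\langle I\rangle$, which gives $s_\beta(p_0)=p_0$ for every $\beta\in I$. For the negativity, I would rewrite $p_0=-2(\rho-\rho_I)$, where $\rho$ and $\rho_I$ are the half-sums of positive roots of $R$ and of the subsystem $\langle I\rangle$. Using $s_\beta(\rho)=\rho-\beta$ and the analogous identity $s_\beta(\rho_I)=\rho_I-\beta$ for $\beta\in I$, the vector $\rho-\rho_I$ is $W_I$-invariant, hence orthogonal to $I$; for $\beta\in R_0\setminus I$ one has $\langle\rho,\beta\rangle=\tfrac12\langle\beta,\beta\rangle>0$ and $\langle\rho_I,\beta\rangle\le 0$ (since $\rho_I$ is a non-negative combination of elements of $I$, and distinct simple roots pair non-positively). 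Expanding $\alpha\in R^+\setminus\langle I\rangle$ as a non-negative integer combination of simple roots with at least one positive coefficient on $R_0\setminus I$ then yields $\langle\rho-\rho_I,\alpha\rangle>0$, and hence $\langle p_0,\alpha\rangle<0$. Genericity of $p_0$ is automatic since $s_\alpha(p_0)-p_0$ is a non-zero multiple of $\alpha$.

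Next I would verify \eqref{monotone vertex} at an arbitrary vertex $v=w(p_0)$. By \eqref{weights w}, the weights at $v$ are exactly $\{w(\alpha) : \alpha\in R^+\setminus\langle I\rangle\}$, which are primitive in $\ell^*=\Z\langle R_0\rangle$ since roots are primitive in the root lattice. Then by linearity of the $W$-action,
\begin{equation*}
\sum_{\alpha\in R^+\setminus\langle I\rangle} w(\alpha) \;=\; w\Bigl(\sum_{\alpha\in R^+\setminus\langle I\rangle}\alpha\Bigr) \;=\; w(-p_0) \;=\; -v,
\end{equation*}
which is precisely \eqref{monotone vertex}. Hence $\Gamma_I(p_0)$ is reflexive, and Proposition~\ref{equivalent monotone} delivers the monotonicity of $(\mathcal{O}_{p_0},\omega)$ with $c_1=[\omega]$.

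The only substantive step is the negativity $\langle p_0,\alpha\rangle<0$, which is really the statement that $\rho-\rho_I$ is strictly positive on the roots outside $\langle I\rangle$. Once that is in hand, everything else reduces to a single line of linearity combined with the equivalence of Proposition~\ref{equivalent monotone}.
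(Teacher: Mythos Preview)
Your proof is correct and the final verification of \eqref{monotone vertex} via linearity of the $W$-action, followed by the appeal to Proposition~\ref{equivalent monotone}, is exactly what the paper does. The difference lies in how the two arguments establish that the specific choice $p_0=-\sum_{\alpha\in R^+\setminus\langle I\rangle}\alpha$ is admissible.

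The paper shows that the full stabilizer $W_{p_0}$ equals $W_I$ by a direct argument: if $w(p_0)=p_0$ then $w$ must preserve the set $R^+\setminus\langle I\rangle$, and an induction on $l(w)$ (peeling off a simple reflection in $I$ at each step, using that $R^+_w\subset\langle I\rangle$) forces $w\in W_I$. From $W_{p_0}=W_I$ one then reads off both $p_0\in\bigcap_{\alpha\in I}H_\alpha$ and genericity. You instead rewrite $p_0=-2(\rho-\rho_I)$ and check the inequality $\langle\rho-\rho_I,\alpha\rangle>0$ for $\alpha\in R^+\setminus\langle I\rangle$ using only two standard root-system facts: $\langle\rho,\beta\rangle=\tfrac12\langle\beta,\beta\rangle$ for simple $\beta$, and that distinct simple roots pair non-positively. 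This is more elementary---no induction on length, no appeal to the structure of $R^+_w$---and it has the added benefit of explicitly verifying the sign condition $\langle p_0,\alpha\rangle<0$ that the derivation of \eqref{weights w} relies on; the paper's proof establishes only $\langle p_0,\alpha\rangle\neq 0$ and tacitly assumes the sign is the right one. What the paper's route buys is the stabilizer statement $W_{p_0}=W_I$ as an independent fact, though this already follows from your genericity via Lemma~\ref{psi inj}.
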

\begin{proof}
First of all, we prove that $p_0$ lives in $\cap_{\alpha\in I}H_\alpha$, and that $s_\alpha(p_0)\neq p_0$ for all $\alpha\in R^+\setminus \langle I \rangle$. 
By \cite[Sect.\ 1.15]{Hum}, it is enough to prove that the isotropy group of $p_0$ under the action of $W$ is exactly $W_I$. 
Observe that the isotropy group of $p_0$ contains $W_I$ since, as already remarked, $R^+\setminus \langle I \rangle$ is $W_I$-invariant, hence the sum of its elements is fixed
by $W_I$.  

Now suppose that $w\in W$ satisfies $w(p_0)=p_0$. This implies that $w$ leaves the set $R^+\setminus \langle I \rangle$ invariant. Indeed,
write $-p_0$ as a sum of simple roots, and let $n_i$ be the coefficients in this sum of the simple roots $\alpha_i\in R_0\setminus I$. 
By the choice of $p_0$, each $n_i$ is strictly positive. If $w$ did not leave the set $R^+\setminus \langle I \rangle$ invariant, then one of its positive roots would be sent to a root in $\langle I \rangle$.
However, if we write each of the roots in $\langle I \rangle$ as a combination of simple roots, by definition their coefficients w.r.t.\ the simple roots in $R_0\setminus I$ are zero. 
Hence one of the $n_i$'s would decrease, contradicting $w(p_0)=p_0$.
So to conclude that the isotropy group of $p_0$ is contained in $W_I$, it is enough to prove that, if $w$ leaves the set $R^+\setminus \langle I \rangle$ invariant, then $w\in W_I$.

The proof of this fact is by induction on $l(w)$. If $l(w)=1$, then $w$ must be in $W_I$. In fact, if $w=s_\alpha$ for some $\alpha\in R_0\setminus  I $,
then $s_\alpha$ would not leave $R^+\setminus \langle I \rangle$ invariant, as $s_{\alpha}(\alpha)=-\alpha$. If $l(w)>1$, let $R_w^+:=\{\alpha \in R^+ \mid w(\alpha)\in -R^+\}$.
Since by hypothesis $w$ leaves $R^+\setminus \langle I \rangle$ invariant, $R_w^+$ must be contained in $\langle I \rangle$. Observe that $R_w^+$ contains a simple root. Indeed, write
$w$ in reduced expression as $s_{i_1}\cdots s_{i_k}$, where $\alpha_{i_l}$ is a simple root and $s_{i_l}$ denotes the corresponding reflection, for all $l=1,\ldots,k$. Then by \cite[Lemma 1.3.14]{Kumar} we have that $\alpha_{i_k}\in R^+_w$,
and so $\alpha_{i_k}\in I$ and $s_{i_k}\in W_I$. Thus $w=w' s_{i_k}$, where $w':=w s_{i_k}$ also leaves $R^+\setminus \langle I \rangle$ invariant, and $l(w')=l(w)-1$. 

$\;$\\

Now we need to check that $\Gamma_I(p_0)$ is a reflexive GKM graph. In order to do so, it is sufficient to observe that, by definition of $p_0$, we have $\sum_{\alpha\in R^+\setminus \langle I \rangle}w(\alpha)=-w(p_0)$, and that the sum on the left hand side is, by \eqref{weights w}, precisely the sum of the weights at $w(p_0)$.
By Definition \ref{def monotone}, the GKM graph $\Gamma_I(p_0)$ is reflexive, and by Proposition \ref{equivalent monotone}, $(\mathcal{O}_{p_0},\omega)$ is a monotone symplectic manifold with $c_1=[\omega]$. 
\end{proof}

\begin{figure}[h]
\begin{center}
\includegraphics[width=8cm]{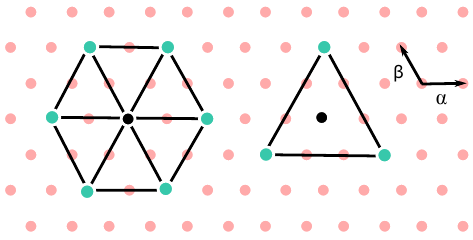}
\caption{Examples of reflexive GKM graphs associated to coadjoint orbits of type $A_2$.}
\label{A2}
\end{center}
\end{figure}

\begin{exm}\label{a2b2}
We give examples of reflexive GKM graphs arising from the coadjoint orbits of $SU(3)$, $SU(4)$ and $SO(5)$.

For $SU(3)$, the root system is of type $A_2$, with simple roots $R_0=\{\alpha,\beta\}$ (see Figure \ref{A2}).
The graph on the left corresponds to the choice of $I=\emptyset$, hence $p_0=-2\alpha-2\beta$. The one on the right 
to $I=\{\beta\}=\langle I \rangle$, hence $p_0=-2\alpha-\beta$.

For $SU(4)$, the root system is of type $A_3$, with simple roots $R_0=\{\alpha,\beta,\gamma\}$.
Let $I=\{\alpha,\gamma\}$ (the two simple roots at the extreme points of the Dynkin diagram), and $\langle I \rangle = I$. 
Then $p_0=-2(\alpha+2\beta+\gamma)$. The reader can check that the associated (reflexive) GKM graph is the 1-skeleton of an octahedron.

For $SO(5)$, the root system is of type $B_2$, with simple roots $R_0=\{x_1-x_2,x_2\}$. Here we identify the dual of the Lie algebra of a maximal compact
torus of $SO(5)$ with $\R^2$ and lattice $\Z^2$, and consider its standard $\Z$-basis $x_1$, $x_2$. In this case there are three reflexive GKM graphs (see Figure \ref{monotone graphs}).
The graph on the left corresponds to the choice of $I=\emptyset$, hence $p_0=-3x_1-x_2$. The one in the middle corresponds to $I=\{x_1-x_2\}=\langle I \rangle$, and $p_0=-2x_1-2x_2$.
The graph on the right to $I=\{x_2\}=\langle I \rangle$, and $p_0=-3x_1$.
\end{exm}

\begin{rmk}\label{octa}
The coadjoint orbit of $SU(4)$ that we describe above corresponds to a Grassmannian of complex planes in $\C^4$. In the associated reflexive GKM graph $\Gamma_{GKM}=(V,E_{GKM})$,
the relative length of all of its edges is $4$. Since the $h$-vector of $\Gamma_{GKM}$ is given by $(1,1,2,1,1)$, Corollary \ref{main combinatorics 2} gives 
$$
\sum_{e\in E_{GKM}}l(e)= 44 + 8 h_1-4h_2=48=4\cdot 12\,.
$$
Let $\Delta$ be the reflexive octahedron in $\R^3$ (the dual to the reflexive cube). Since it is not simple, we cannot apply Theorem \ref{main combinatorics} to $\Delta$.
However, it is interesting to notice that the 1-skeleton of $\Delta$ is a `Gorenstein graph' of index 4. For instance, let $\{x_1,x_2,x_3\}$ be the standard basis of $\R^3$. Then
at the vertex $x_1$, equation \eqref{gor gkm} holds. Indeed $$(-x_1+x_3)+(-x_1-x_3)+(-x_1+x_2)+(-x_1-x_2)=-4 x_1\,.$$ Hence we can apply the formula
in Remark \ref{ggkm} to count the sum of the relative lengths of its edges, which is exactly $12$.

\end{rmk}

\end{document}